\newtheorem{theorem}{Theorem}[section]
\newtheorem{lemma}[theorem]{Lemma}
\newtheorem{proposition}[theorem]{Proposition}
\newtheorem{corollary}[theorem]{Corollary}
\theoremstyle{definition}
\newtheorem{definition}[theorem]{Definition}
\newtheorem{example}[theorem]{Example}
\newtheorem{remark}[theorem]{Remark}
\newtheorem{claim}{Claim}
\def\U{\mathbb{U}} %
\def\H{\mathbb{H}}
\DeclareMathOperator{\diam}{diam}
\DeclareMathOperator{\lcm}{lcm}
\DeclareMathOperator{\Iso}{Iso}
\DeclareMathOperator{\Aut}{Aut}
\DeclareMathOperator{\Sym}{Sym}
\def\rng{\mathrm{rng}} %
\def\dom{\mathrm{dom}} %
\newcommand{\sh}{\mbox{-}}
\newcommand{\Q}{\mathbb{Q}}
\newcommand{\Lan}{\mathcal{L}}
\newcommand{\M}{\mathcal{M}}
\newcommand{\K}{\mathcal{K}}
\newcommand{\C}{\mathcal{C}}
\newcommand{\R}{\mathcal{R}}
\newenvironment{revvy}{\color{magenta}}{}
\newenvironment{revdw}{\color{blue}}{}
\newenvironment{revsg}{\color{red}}{}
\def\by{\begin{revvy}}
\def\ey{\end{revvy}}
\def\bd{\begin{revdw}}
\def\ed{\end{revdw}}
\def\bg{\begin{revsg}}
\def\eg{\end{revsg}}
\def\N{\mathbb{N}} %
\def\R{\mathbb{R}}
\def\C{\mathcal{C}}
\def\Q{\mathbb{Q}}
\def\e#1{e}
\begin{document}
\title[Groups with the L\'evy property]{Isometry groups and countable groups with the L\'evy property}

\author[W. Dai]{Wei Dai}
\address{School of Mathematical Sciences and LPMC, Nankai University, Tianjin 300071, P.R. China}
\email{weidai@nankai.edu.cn}

\author[S. Gao]{Su Gao}
\address{School of Mathematical Sciences and LPMC, Nankai University, Tianjin 300071, P.R. China}
\email{sgao@nankai.edu.cn}

\author[V.H Ya\~nez]{V\'{\i}ctor Hugo Ya\~nez}
\address{School of Mathematical Sciences and LPMC, Nankai University, Tianjin 300071, P.R. China}
\email{vhyanez@nankai.edu.cn}

\begin{abstract}
A topological group $G$ is said to have the \emph{L\'evy property} if it admits a dense subgroup which is decomposed as the union of an increasing sequence of compact subgroups $\mathcal{G} = \{G_i : i \in \N\}$ of $G$ which exhibits concentration of measure in the sense of Gromov and Milman. We say that $G$ has the \emph{strong L\'evy property} whenever the sequence $\mathcal{G}$ is comprised of finite subgroups. In this paper we give several new classes of isometry groups and countable topological groups with the strong L\'evy property. 
We prove that if $\Delta$ is a countable distance value set with  arbitrarily  small values, then $\Iso(\U_\Delta)$, the isometry group of the Urysohn $\Delta$-metric space equipped with the pointwise convergence topology, where $\U_\Delta$ is equipped with the metric topology, has the strong L\'evy property. We also prove that if $\Lan$ is a Lipschitz continuous signature, then $\Iso(\U_\Lan)$, the isometry group of the unique separable Urysohn $\Lan$-structure, has the strong L\'evy property. In addition, our approach shows that any countable omnigenous locally finite group can be given a topology with the L\'evy property. As a consequence to our results, we obtain at least continuum many pairwise nonisomorphic countable topological groups or isometry groups with the strong L\'evy property. 
\end{abstract}

%

%

%

%

%

%

%


\subjclass[2010]{Primary: 22A10; Secondary: 22F50, 43A05, 03C55, 54E70}
\keywords{L\'evy property, Extremely amenable group, Urysohn metric space, Mixed identity free (MIF), Omnigenous}
\thanks{The authors acknowledge the partial support of their research by the Fundamental Research Funds for the Central Universities and by the National Natural Science Foundation of China (NSFC) grant 12271263. The third listed author also acknowledges support from the China Postdoctoral Science Foundation Tianjin Joint Support Program under
Grant Number 2025T001TJ}
\maketitle

\section{Introduction}

All topological spaces appearing in this paper are assumed to be Hausdorff. A topological group $G$ is \emph{extremely amenable} if every continuous action of $G$ on a compact space admits a fixed point. This is a remarkable property for a topological group to have, particularly in view of the theorem of Veech \cite{Veech} which implies that no locally compact groups can be extremely amenable. In fact, the question of whether extremely amenable groups even exist was only first posed explicitly by Mitchell \cite{Mitchell} in 1970, almost twenty years after the settlement of Hilbert's fifth problem. %
The first examples of extremely amenable groups were constructed by Herer and Christensen \cite{Herer} in the form of the so-called exotic groups. A topological group $G$ is \emph{exotic} if it admits no nontrivial unitary representations. The groups constructed in \cite{Herer} have the form $L_0(\mu, \mathbb{R})$, the additive group of all real-valued $\mu$-measurable functions with the topology of convergence in measure, where $\mu$ is a pathological submeasure. Since then, many groups of the form $L_0(\mu, G)$, where $\mu$ is a submeasure and $G$ is a topological group, have been found to be extremely amenable. This thread of research includes the following work:
\begin{enumerate}
\item[(a)] Glasner \cite{Glasner} and, independently, Furstenberg and Weiss (in  unpublished  work), for $\mu$ the Lebesgue measure on the unit interval $[0,1]$ and $G$ the circle group $\mathbb{T}$;
\item[(b)] Pestov \cite{Pestov2002} for $\mu$ the Lebesgue measure on the unit interval $[0,1]$ and $G$ any amenable locally compact second countable group;
\item[(c)] Farah and Solecki \cite{FarahSolecki} for $\mu$ any diffused submeasure and $G$ any solvable compact second countable group, and for $\mu$ any strongly diffused submeasure and $G$ any compact group or any amenable locally compact second countable group;
\item[(d)] Sabok \cite{Sabok} for $\mu$ any diffused submeasure and $G$ any solvable group.
\end{enumerate}

Historically, though, after Herer and  Christensen's  work \cite{Herer}, the next examples of extremely amenable groups were discovered by Gromov and Milman \cite{Gromov} through a concentration-of-measure phenomenon exhibited by certain groups, which is the main focus of this paper.  Following Glasner \cite{Glasner}, we say that a topological group $G$ satisfy  \emph{the L\'evy property} (or is a \emph{L\'evy group}) if
it admits an increasing sequence $\mathcal{G} = \{K_i : i \in \N\}$ of compact subgroups (equipped with the normalized Haar measures $\mu_i$, regarded as measures on $G$) with the following properties: 
\begin{itemize}
\item[(i)] The union $\bigcup \mathcal{G}$ is dense in $G$, and
\item[(ii)] For each open neighborhood $V$ of the identity of $G$ and each sequence $\{A_i : i \in \N\}$ of Borel subsets $A_i \subseteq K_i$ satisfying 
$$\liminf \mu_i(A_i) >0,$$we have
\begin{equation}
\lim \mu_i(VA_i) =1.
\end{equation}
\end{itemize}
We call the sequence $\mathcal{G}$ in the above definition a \emph{witnessing sequence} for the L\'evy property of $G$. Gromov and Milman \cite{Gromov} showed that the L\'evy property implies extreme amenability, and they gave several important examples of groups with the L\'evy property. Among them, they showed that the unitary group $U(H)$, where $H$ is  a separable infinite-dimensional Hilbert space, has the L\'evy property. By definition, $U(H)$ is not exotic; thus it is extremely amenable for a different reason other than exoticity. They also gave two examples of countable metric groups with the L\'evy property, which we will review in Section \ref{section:examples} below. In fact, in the above list of $L_0$-type groups known to be extremely amenable, many turn out to have the L\'evy property, which we also review in Section~\ref{section:examples}. Farah and Solecki \cite{FarahSolecki} also gave examples of extremely amenable $L_0$-type groups which do not have the L\'evy property (see our review in Section \ref{section:negating:levy} below).

Other than $L_0$-type groups, more operator groups were shown to be extremely amenable, some even to have the L\'evy property, after Gromov and Milman's work on $U(H)$. Most noteworthy among them is the work of Pestov  showing that $\mbox{Iso}(\mathbb{U})$, the group of all isometries of the universal Urysohn metric space $\mathbb{U}$ with the pointwise convergence topology, is extremely amenable \cite{Pestov2002}, and in fact has the L\'evy property \cite{Pestov1}. In this paper, we give more examples of isometry groups with the L\'evy property. Pestov \cite{Pestov1998} also showed the extremely amenability of $\mbox{Homeo}_+([0,1])$, the group of all orientation-preserving autohomeomorphisms of the unit interval with the pointwise convergence topology, by using a Ramsey type argument. Giordano and Pestov \cite{GiordanoPestov, GiordanoPestov2} showed, among other things, that $\mbox{Aut}(X, \mu)$, the group of all measure-preserving automorphisms of a standard nonatomic finite or sigma-finite measure space $(X, \mu)$, equipped with the weak topology, has the L\'evy property, and therefore is extremely amenable.

A third class of topological groups known to be extremely amenable in the literature comes from automorphism groups of countable structures. Pestov \cite{Pestov1998} gave the first example of such a group, which is $\mbox{Aut}(\mathbb{Q}, <)$, the group of all automorphisms of a countable dense linear order without endpoints, equipped with the pointwise convergence topology, where the underlying countable structure is equipped with the discrete topology. This was proved using a Ramsey type argument, which was then broadened in the work of Kechris, Pestov, and  Todor\v{c}evi\'{c}  \cite{KPT} to establish a complete characterization of all automorphism groups of countable structures which are extremely amenable. This work has been used to establish the extreme amenability of many automorphism groups of countable structures (see Nguyen Van Th\'e \cite{NVT} for a survey). However, no groups of this type were found to have the L\'evy property. In this paper, we isolate an argument of Schneider \cite{Schneider} to clarify that no non-Archimedean Polish group can have the L\'evy property, thus explaining the lack of examples of L\'evy groups in this widely studied class.

Among the wide variety of groups which have been shown to have the L\'evy property, a substantial number of them have a witnessing sequence consisting of finite groups. This motivates the following definition. We say that a topological group $G$ satisfy \emph{the strong L\'evy property} if there is an increasing sequence of finite groups, equipped with the normalized counting measures, witnessing the L\'evy property of $G$. Note the trivial fact that a group with the strong L\'evy property cannot be torsion-free. Also, for a countable topological group $G$, the L\'evy property and the strong L\'evy property are equivalent. One of the focuses of this paper is to study countable locally finite groups which can be given a group topology with the (strong) L\'evy property. Recall that a group $G$ is \emph{locally finite} if any finitely generated subgroup of $G$ is finite.

The main goal of this paper is to give more examples of groups with the L\'evy property. A secondary goal is to analyze the isomorphism types of these examples as topological groups, and to justify the claim that there are many of them which are pairwise nonisomorphic to each other. In doing this, we concentrate on isometry groups and countable groups. As mentioned above, Gromov and Milman \cite{Gromov} gave the first two examples of countable groups with the L\'evy property. Algebraically, their examples are, respectively, the direct sum of countably many copies of $\mathbb{Z}/2\mathbb{Z}$ and the group of all permutations of $\mathbb{N}$ which have finite support. Here, we give several new classes of examples of countable groups with the L\'evy property. In particular, we show that any countable  omnigenous  locally finite group carries a group topology with the L\'evy property. In view of the result of Gao and Li \cite{Gao-Li} which states that the isomorphism relation of countable  omnigenous  locally finite groups is Borel complete, this shows that there are many countable groups with the L\'evy property that are provably pairwise nonisomorphic to each other.

We also give more examples of isometry groups with the L\'evy property. The first class of these have the form $\mbox{Iso}(\mathbb{U}_\Delta)$, where $\Delta$ is a countable distance value set with $\inf \Delta=0$ and $\mathbb{U}_\Delta$ is the countable Urysohn $\Delta$-metric space, equipped with the topology of pointwise convergence in metric. These groups were known to be extremely amenable by the work of Etedadialiabadi, Gao and Li \cite{Eted-Gao-Feng}, and it is known that they are separable metrizable but not Polish.  The second class consists of Polish groups arising from continuous model theory. They are denoted $\mbox{Iso}(\mathbb{U}_{(\mathbb{Q}, V)})$, where $(\mathbb{Q}, V)$ is a countable good value pair for a Lipschitz continuous signature $\mathcal{L}$ with only finitely many relation symbols, but the group is in fact the automorphism group of the continuous Urysohn $\mathbb{Q}$-metric $\mathcal{L}$-structure $\mathbb{U}_{(\mathbb{Q}, V)}$. The structures $\mathbb{U}_{(\mathbb{Q}, V)}$ were shown to exist by Gao and Ren \cite{Gao-Ren}, and their automorphism groups are closed subgroups of $\mbox{Iso}(\mathbb{U})$, and therefore Polish.

The rest of the paper is organized as follows. 

In Section \ref{section:examples} we give a review of the previously known examples of topological groups with the L\'evy property. %
In Section \ref{section:negating:levy} we discuss some currently known methods for disproving the L\'evy property for a topological group. In particular, we discuss in detail the method of Schneider \cite{Schneider} which proves that a non-Archimedean Polish group cannot have the L\'evy property. In Section~\ref{section:newcountable} we define two new classes of countable metric groups with the L\'evy property. %

 In Section \ref{section:martingale} we accomplish two things. First, 
we develop a technical tool about adjusting distance values in a finite $\Delta$-metric space without changing its isometry group. 
Afterwards, we apply the  martingale  technique to deal with a technical problem that arises in the construction of witnessing sequences of the strong L\'evy property in the context of $\Delta$-metric spaces. 
With the technical tools in hand, we prove our main results in Sections \ref{first:density:section} through \ref{section:cont:logic}. In Section \ref{first:density:section} we use an elementary density argument to prove that a canonical isometric copy of the Hall's group in $\mbox{Iso}(\mathbb{U}_{\Delta})$ %
has the L\'evy property. In Section \ref{section:omnigen:construction} we give a recursive construction for proving that all countable  omnigenous  locally finite groups can witness the L\'evy property of $\Iso(\U_\Delta)$, and at the same time they themselves become L\'evy groups with the subspace topology.  %
Eventually, Section \ref{section:cont:logic} contains our results for the setting in continuous logic. 

In the final Section \ref{section:complexity} we study the isomorphism types of the L\'evy groups considered in this paper from the point of view of descriptive set theory of equivalence relations. 

\emph{Acknowledgments.} We thank Mahmood Etedadialiabadi, Aristotelis Panagiotopoulos, Frederich Martin Schneider and  S\l awek  Solecki for helpful discussions on the topic of the paper.

\section{A Review of Known Constructions} \label{section:examples}
In this section we review previously known constructions of topological groups with the L\'evy property.

\subsection{The Gromov--Milman constructions}

The following examples were constructed in the original 1983 paper \cite{Gromov} by Gromov and Milman.

\begin{example}[{\cite[Gromov--Milman]{Gromov}}]\label{ex:1} Let $E_2^n$ be the direct sum of $n$ copies of the group $\mathbb{Z}/2\mathbb{Z}$. Let $\theta_n\colon E_2^n\to E_2^{n+1}$ be the group embedding
$$ \theta_n(g_1, \dots, g_n)=(g_1,\dots, g_n, 0). $$
Let $E_2^\infty$ be the direct limit of the direct system $(E_2^n, \theta_n)$, i.e., each element of $E_2^n$ can be viewed as an  infinite  sequence $(g_n)$, where for all but finitely many $n$, $g_n=0$. For $g=(g_n), h=(h_n)\in E_2^\infty$, define
$$ \rho(g,h)=|\{n\colon g_n\neq h_n\}| $$
and let
$$ \varphi(g, h)=\displaystyle\frac{\rho(g, h)}{\max\{ \rho(g, \vec{0}), \rho(h, \vec{0})\}}, $$
where $\vec{0}=(0)$, if $g\neq h$, and let $\varphi(g, h)=0$ if $g=h$.
Finally, define a metric 
$$ \delta(g, h)=\inf\left\{ \displaystyle\sum_{i=0}^{m-1} \varphi(k_i, k_{i+1})\colon g=k_0, k_1, \dots, k_m=h\in E_2^\infty\right\}. $$
Then $(E_2^\infty, \delta)$ is a group with the L\'evy property.
\end{example}

\begin{example}[{\cite[Gromov--Milman]{Gromov}}]\label{ex:2} Let $S_n$ be the symmetric group of the $n$-element set $\{0, \dots, n-1\}$. Let $\theta_n\colon S_n\to S_{n+1}$ be the group embedding defined by
$$ \theta_n(g)(i)=\left\{\begin{array}{ll} g(i), & \mbox{if $i<n$,} \\ n, & \mbox{if $i=n$.} \end{array}\right. $$
Let $S_\omega$ be the direct limit of the direct system $(S_n, \theta_n)$. Define functions $\rho$, $\varphi$ and metric $\delta$ in the same fashion as in the previous example, except that we redefine $\vec{0}$ to be the trivial permutation. Then $(S_\omega, \delta)$ is a group with the L\'evy property.
\end{example}

\begin{example}[{\cite[Gromov--Milman]{Gromov}}; see also {\cite[Corollary 4.1.5]{Pestov2}}]\label{ex:3} The unitary group $U(H)$, where $H$ is a separable infinite-dimensional Hilbert space, equipped with the strong operator topology, has the L\'evy property.
\end{example}

In fact, Gromov and Milman essentially showed that any group that can be approximated by the sequence of unitary groups $U(n)$ of finite rank indeed has the L\'evy property. The following is an example from Pestov \cite{Pestov2}.

\begin{example}[{\cite[Corollary 4.1.17]{Pestov2}}]
The group of unitary operators of the form $\mathbb{I} + K$, where $\mathbb{I}$ is the identity and $K$ is a compact operator, equipped with the uniform operator topology, has the L\'evy property. 
\end{example}

\subsection{$L_0$-type groups}

Consider the notion of submeasure as in Herer and Christensen \cite{Herer}. Given a set $X$ and an algebra $\mathcal{B}$ of subsets of $X$,  a real-valued map $\phi: X \to \R$ is said to be a \emph{submeasure} if $\phi(\varnothing) = 0$ and for each pair $U,V \in \mathcal{B}$ one has 
\begin{itemize}
\item[(a)] $\phi(U) \leq \phi(V)$ whenever $U \subseteq V$, and
\item[(b)] $\phi(U \cup V) \leq \phi(U) + \phi(V)$.
\end{itemize}
A submeasure $\phi$ is called a \emph{measure} whenever item (b) above is an equality for disjoint sets $U,V \in \mathcal{B}$. Following Farah and Solecki \cite{FarahSolecki}, a submeasure $\phi$ is said to be \emph{diffused} if for each $\epsilon > 0$ there exists a covering of $X$ by sets from $\mathcal{B}$ of $\phi$-submeasure less than $\epsilon$, and $\phi$ is \emph{strongly diffused} if for each $\epsilon > 0$ there exists some $\delta > 0$ so that for any positive integer $N$ there exists a finite partition $\mathcal{P}\subseteq \mathcal{B}$ of $X$ with $|\mathcal{P}|>N$, such that for each $\mathcal{A} \subseteq \mathcal{P}$ satisfying $|\mathcal{A}| < \delta \cdot |\mathcal{P}|$ one has that $\phi(\bigcup \mathcal{A}) < \epsilon$. Note that each diffused measure is strongly diffused.  

Let $G$ be a topological group and $\phi$ be a submeasure (on a set $X$ with an algebra $\mathcal{B}$). We shall say that a function $f$ from $X$ to $G$ is a \emph{$G$-step function} %
whenever $f$ takes finitely many values in $G$, and the pre-images of points under $f$ are elements in $\mathcal{B}$. %
Given $G$-step functions $f,g$, we define $f\sim g$ 
if 
$$\phi(\{x\in X\colon f(x)\neq g(x)\})=0.$$ Then $\sim$ is an equivalence relation on the family of all $G$-step functions. We let $S(\phi,G)$ be the space of all equivalence classes of $G$-step functions under $\sim$, which we regard  %
as a topological group with the topology of
 \emph{convergence in measure}, 
for which a basis of the identity is given by the sets \begin{equation*}
O[V,\epsilon] := \left\{f \in S(\phi,G) \colon \phi(\left\{x \in X \colon f(x) \not \in V \right\} ) < \epsilon\right\},
\end{equation*}    
where $V$ is an arbitrary open neighborhood of the identity of $G$ and $\epsilon > 0$. Whenever $G$
admits a right-invariant metric $\rho$,  %
the group $S(\phi,G)$ admits a right-invariant metric in the form \begin{equation*}
d_\phi(f,g) = \inf \left\{ \epsilon > 0 \colon \phi(\left\{x \in X \colon \rho(f(x),g(x)) > \epsilon \right\} ) < \epsilon\right\}.
\end{equation*} 
We let $L_0(\phi,G)$ be the completion of $S(\phi,G)$ with respect to the metric $d_\phi$.

\begin{example}[{\cite[Glasner, Furstenberg--Weiss]{Glasner}}]
Let $\lambda$ be the Lebesgue measure on the unit interval $[0,1]$ and let $\mathbb{T}$ be the circle group. Then $L_0(\lambda, \mathbb{T})$ has the L\'evy property. 
\end{example}

\begin{example}[{\cite[Farah--Solecki]{FarahSolecki}}] \label{farahscolecki:l0:example}
Let $\phi$ be a strongly diffused submeasure. If $G$ is a compact group, then $L_0(\phi,G)$ satisfies the L\'evy property. 

\end{example}

\subsection{The Giordano--Pestov constructions}

Giordano and Pestov \cite{GiordanoPestov, GiordanoPestov2} showed that a number of operator groups have the L\'evy property by considering estimates of the concentration function, a technique we will use in our proofs in this paper.

\begin{example}[{\cite[Giordano--Pestov]{GiordanoPestov}}] 
Let $M$ be an injective von Neumann algebra, and let $U(M)_S$ be its unitary group endowed with the so-called $s(M, M_*)$-topology. Then $U(M)_S$ has the L\'evy property.
\end{example}

\begin{example}[{\cite[Giordano--Pestov]{GiordanoPestov2}}] 
Let $\mathrm{Aut}(X,\mu)$ be the group of all measure-preserving automorphisms of a standard nonatomic finite or sigma-finite measure space, equipped with the weak topology. Then $\mathrm{Aut}(X,\mu)$ has the strong L\'evy property. %
\end{example}

\begin{example}[{\cite[Giordano--Pestov]{GiordanoPestov2}}] 
Let $E$ be a discrete measured equivalence relation, acting ergodically on a Lebesgue space $(S, \mu)$. Suppose $E$ is an amenable equivalence relation. Then the full group $[E]$, the group of all bimeasurable transformations $\sigma$ of $(S, \mu)$ where $\sigma(s)Es$ for $\mu$-a.e. $s\in S$, equipped with the uniform topology, has the strong L\'evy property.
\end{example}

\subsection{The isometry group $\Iso(\mathbb{U})$}

Our results in this paper are motivated by the following influential result of Pestov \cite{Pestov1}.

\begin{example}[{\cite[Pestov]{Pestov1}}]
Let $\mathbb{U}$ be the universal Urysohn metric space and let $\Iso(\U)$ be the group of all auto-isometries of $\U$, equipped with the pointwise convergence topology. Then $\Iso(\U)$ has the strong L\'evy property. 
\end{example}

Related to this result, Pestov \cite{Pestov2002} had previously shown that $\Iso(\U)$ is extremely amenable and that it contains a dense locally finite subgroup. This latter fact was already nontrivial and had been independently proved by Solecki, Rosendal (see \cite{Rosendal}), and Vershik (see \cite{Vershik}).

\section{Known Methods for Proving Non-L\'evy-ness} \label{section:negating:levy}

Pestov has noted that $\mbox{Aut}(\mathbb{Q}, <)$ does not have the L\'evy property (see \cite[Page 7]{Pestov2}) since it does not contain any nontrivial compact subgroup. This result will be superseded by Corollary~\ref{cor:nonLevy} below, but for a different reason.

The first explicit method developed for disproving the L\'evy property was given by Farah and Solecki \cite{FarahSolecki}. The way the method works is by proving that any group $G$ satisfying the L\'evy property must also satisfy a ``minimum'' amount of measure concentration when one considers an arbitrary increasing sequence comprised of its compact subgroups. 

\begin{lemma}{\cite[Lemma 4.1]{FarahSolecki}}
Let $G$ be a topological group satisfying the L\'evy property. Let $\{K_n\colon n \in \N\}$ be an increasing sequence (by inclusion) of compact  subgroups  of $G$ with $\bigcup K_n$ dense in $G$. Let $V$ be an open neighborhood of the identity of $G$ and let $F \subseteq G$ be an arbitrary compact subset of $G$. Then, for any sequence $\{A_n\colon n \in \N\}$ where each $A_n$ is a Borel subset of $K_n$, we have the inequality \begin{equation*}
\liminf_{n} \mu_n(VA_n) - \mu_n(FA_n) \geq 0,
\end{equation*}
where $\mu_n$ is the probability Haar measure on $K_n$, considered as a measure on $G$ with support $K_n$.
\end{lemma}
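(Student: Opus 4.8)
The plan is to argue by contradiction, starting from a witnessing sequence for the L\'evy property of $G$. Fix such a sequence $\{L_j : j \in \N\}$ with normalized Haar measures $\nu_j$. The first step is a \emph{localization}: since $\mu_n$ is supported on $K_n$ and $A_n \subseteq K_n$, one has $(SA_n) \cap K_n = (S \cap K_n)A_n$ for every $S \subseteq G$, so $\mu_n(VA_n) = \mu_n((V \cap K_n)A_n)$ and $\mu_n(FA_n) = \mu_n((F \cap K_n)A_n)$; hence all the relevant quantities live inside the compact groups $K_n$. It also costs nothing to assume $V$ is symmetric (replace it by $V \cap V^{-1}$) and that $e \in F$ (replace $F$ by $F \cup \{e\}$, which only enlarges $FA_n$ and so strengthens the asserted inequality). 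Now suppose the conclusion fails; after passing to a subsequence, which I do not relabel, there is $\delta > 0$ with $\mu_n(FA_n) \ge \mu_n(VA_n) + \delta$ for all $n$. Put $B_n := K_n \setminus VA_n$. Then $\mu_n(B_n) = 1 - \mu_n(VA_n) \ge \delta$, and since $V$ is symmetric one checks $VB_n \cap A_n = \varnothing$, so that $\mu_n(VB_n) \le 1 - \mu_n(A_n)$.

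The core of the argument is to transfer the concentration of measure available along $\{L_j\}$ to the sets $B_n \subseteq K_n$. Since both $\bigcup_n K_n$ and $\bigcup_j L_j$ are dense in $G$, a compactness-and-density argument gives, for any prescribed small symmetric neighbourhood $W$ of the identity, a back-and-forth interleaving of cofinal subsequences of the two families with $K_n \subseteq WL_{j(n)}$ and $L_{j(n)} \subseteq WK_{n}$ (after relabelling); passing to these subsequences is harmless, since a cofinal subsequence of a witnessing sequence is again witnessing (fill the gaps with $A_j := L_j$). From such an interleaving one wants to produce, for $W$ small enough (say $W^{3} \subseteq V$), sets $B'_n \subseteq L_{j(n)}$ with $\nu_{j(n)}(B'_n) \ge \delta/2$ and $B_n \subseteq W B'_n$, together with the reverse comparison relating $W$-neighbourhoods of subsets of $L_{j(n)}$ back to subsets of $K_n$. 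Granting this, the L\'evy property applied to $\{B'_n\}$ yields $\nu_{j(n)}(W B'_n) \to 1$, and pushing this across the $W$-approximations gives $\mu_n(VB_n) \to 1$. Combined with $\mu_n(VB_n) \le 1 - \mu_n(A_n)$ this forces $\mu_n(A_n) \to 0$ along the subsequence.

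It remains to derive a contradiction with $\mu_n(FA_n) \ge \mu_n(VA_n) + \delta \ge \delta$. The point is that a \emph{fixed} compact set cannot be ``fat'' with respect to the $K_n$: writing $F \cap K_n$ as a union of boundedly many small translates inside $K_n$ (using compactness of $F$ in $G$ together with the interleaving) and tracking these translates through the $W$-approximation to $L_{j(n)}$, one concludes $\mu_n((F \cap K_n)A_n) \to 0$ whenever $\mu_n(A_n) \to 0$. This contradicts $\mu_n(FA_n) \ge \delta$, and the proof is complete.

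I expect the main obstacle to be the measure comparison in the second paragraph. The groups $K_n$ and $L_{j(n)}$ are forced to be close only \emph{as subsets} of $G$ and bear no a priori algebraic relationship, so transporting their normalized Haar measures back and forth --- even approximately, in a transportation or L\'evy--Prokhorov metric --- must be done with care; the cleanest route would be to show that Hausdorff-closeness of two compact subgroups of a topological group entails closeness of their Haar measures, and then to make the error estimates quantitative. This is precisely the step where the L\'evy property of $G$, rather than mere density of $\bigcup_n K_n$, is indispensable: in an infinite compact group, with $\{K_n\}$ an exhausting chain of finite subgroups, a suitable fixed compact $F$ and singleton sets $A_n$ already violate the inequality, so no argument bypassing the L\'evy hypothesis can work.
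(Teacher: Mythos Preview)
The paper does not give a proof of this lemma; it is quoted from \cite{FarahSolecki} and used as a black box in Section~\ref{section:negating:levy}. So there is no in-paper argument to compare your proposal against, and the assessment below is on the proposal's own terms.

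Your outline has two genuine gaps. The first you flag yourself: transporting Haar-measure information between the given chain $\{K_n\}$ and a witnessing chain $\{L_j\}$ via Hausdorff proximity. What your argument actually needs is not weak (L\'evy--Prokhorov) closeness of the measures but a \emph{set-lifting} statement---given $B_n\subseteq K_n$ with $\mu_n(B_n)\ge\delta$, produce $B'_n\subseteq L_{j(n)}$ with $\nu_{j(n)}(B'_n)\ge\delta/2$ and $B_n\subseteq WB'_n$, together with a comparable transfer in the reverse direction---and this does not follow from Hausdorff closeness of the two subgroups alone. Nothing in the sketch supplies it, and it is not a standard fact.

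The second gap is in your last step, and you do not flag it: from $\mu_n(A_n)\to 0$ you want $\mu_n\bigl((F\cap K_n)A_n\bigr)\to 0$. Covering $F$ by finitely many translates $f_iW$ and approximating the $f_i$ inside $\bigcup_n K_n$ gives at best
\[
\mu_n(FA_n)\ \le\ m\,\mu_n\bigl((W^2\cap K_n)A_n\bigr),
\]
but you have no control over the relative size of the neighbourhood $W^2\cap K_n$ inside $K_n$; in a compact group a product $UA$ can have measure close to $1$ even when $\mu(A)$ is tiny. So the contradiction with $\mu_n(FA_n)\ge\delta$ is not secured. As it stands, the proposal is a plausible outline whose two load-bearing steps are both unproved.
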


They used the above lemma to show that certain $L_0$-type groups do not have the L\'evy property.

\begin{theorem}[Farah--Solecki {\cite[Theorem 4.2]{FarahSolecki}}] 
Let $G$ be any compact topological group. If $G$ is not connected, then there exists a sequence of natural numbers $\{M_i\colon i \in \N^+\}$ and a diffused submeasure $\psi_0$ on the algebra of all clopen subsets of the space $\prod_{i \in \N^+} M_i$ such that the group $L_0(\psi_0,G)$ \emph{fails} to have the L\'evy property.
\end{theorem}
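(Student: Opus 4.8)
The plan is to reduce, by passing to a quotient, to the case of a nontrivial \emph{finite} group, and then to build for that group a diffused submeasure whose $L_0$-group violates the measure-concentration inequality recorded in the preceding lemma.

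First, the reduction. Since $G$ is compact and not connected, its identity component $G_0$ is a proper closed normal subgroup and $G/G_0$ is a nontrivial compact totally disconnected group; by van Dantzig's theorem the latter has a proper open (necessarily finite-index, hence clopen) subgroup, and the normal core of that subgroup, pulled back to $G$, is a clopen normal subgroup $N\lhd G$ with $F:=G/N$ finite and nontrivial. For any submeasure $\phi$ the quotient map $\pi\colon G\to F$ induces, by post-composition on representatives, a continuous, open, surjective homomorphism $L_0(\phi,G)\to L_0(\phi,F)$: continuity holds because each basic neighbourhood $O[V,\epsilon]$ in $L_0(\phi,F)$ pulls back to $O[\pi^{-1}(V),\epsilon]$. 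The L\'evy property is inherited by continuous homomorphic images — a witnessing sequence $\{K_n\}$ pushes forward to an increasing sequence of compact subgroups with dense union carrying the pushed-forward Haar measures, and condition (ii) transfers because preimages of identity neighbourhoods are identity neighbourhoods. So it suffices to produce $\{M_i:i\in\N^+\}$ and a diffused submeasure $\psi_0$ on the clopen algebra of $\prod_{i\in\N^+}M_i$ for which $L_0(\psi_0,F)$ fails the L\'evy property.

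Next, the submeasure. Choosing the $M_i$ to grow very fast, one builds $\psi_0$ as a supremum over ``scales'' of suitably renormalised uniform-type functionals attached to the finite quotients $\prod_{i<n}M_i$, by a recursive construction arranged so that: (a) $\psi_0$ is diffused — for every $\epsilon>0$ the space is covered by finitely many basic clopen sets of $\psi_0$-value $<\epsilon$, which is guaranteed by always retaining the freedom to descend to a sufficiently deep coordinate level — and (b) $\psi_0$ is \emph{not} strongly diffused, witnessed by a fixed $\epsilon_0>0$: for each $\delta>0$ there are arbitrarily deep levels $n$ at which the partition $P_n$ of $\prod_i M_i$ into the atoms of the algebra $\mathcal A_n$ generated by the first $n$ coordinates has a subfamily $S_n$ of fewer than $\delta\,|P_n|$ atoms with $\psi_0(\bigcup S_n)\ge\epsilon_0$, and moreover $S_n$ can be taken robust in the sense that every union of a sufficiently large fraction of its atoms still has $\psi_0$-value $\ge\epsilon_0$. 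Feature (b) is forced on us: by Example~\ref{farahscolecki:l0:example}, applied with the compact group $F$, a strongly diffused submeasure would make $L_0(\psi_0,F)$ a L\'evy group.

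Finally, with $\psi_0$ in hand, let $K_n\le L_0(\psi_0,F)$ be the finite subgroup of classes of $F$-valued $\mathcal A_n$-measurable step functions; these increase, their union is dense, and the normalized counting measure $\mu_n$ on $K_n\cong F^{P_n}$ is its Haar measure. For $g,a\in K_n$ one has $ga^{-1}\in V:=O[\{e\},\epsilon_0]$ iff $\psi_0(\{p\in P_n:g(p)\ne a(p)\})<\epsilon_0$, so $VA\cap K_n$ is the $\epsilon_0$-neighbourhood of $A$ in this translation-invariant metric. Using the robust blocks $S_n$ of (b) together with a Chernoff-type estimate, one selects along the relevant levels Borel sets $A_n\subseteq K_n$ so that a $\psi_0$-distance-$\ge\epsilon_0$ barrier around $A_n$ persists in $\mu_n$-measure, while the coordinate structure of $\psi_0$ lets a single \emph{fixed} compact set $F'$ (a suitable fixed subgroup of $L_0(\psi_0,F)$) smear $A_n$ across a fixed positive $\mu_n$-fraction of $K_n$. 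Then $\limsup_n\bigl(\mu_n(F'A_n)-\mu_n(VA_n)\bigr)>0$, contradicting the preceding lemma, so $L_0(\psi_0,F)$ — and hence $L_0(\psi_0,G)$ — is not a L\'evy group. The technical heart, and the main obstacle, is to balance these demands on $\psi_0$: it must be diffused, which forces the atoms at every fixed scale to become eventually tiny and thereby dooms all naive choices (then the $\epsilon_0$-ball of $V$ inside $K_n$ generates $K_n$, and a crude volume bound never keeps $\mu_n(VA_n)$ away from $1$); yet $\psi_0$ must still carry, at arbitrarily deep levels and aligned with the coordinate partitions, sparse families of atoms of value $\ge\epsilon_0$ that are robust under passing to large sub-blocks; and the sets $A_n$ and a single fixed compact $F'$ must be matched to these families so that $F'$, but not $V$, spreads $A_n$ to positive density.
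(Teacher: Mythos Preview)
The paper does not supply a proof of this statement; it is quoted as \cite[Theorem~4.2]{FarahSolecki} and used only to illustrate the reach of the preceding lemma. There is therefore no in-paper argument to compare against.

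On the merits of your proposal: the architecture is correct and matches Farah--Solecki. The reduction to a nontrivial finite quotient $F$ via a clopen normal subgroup is sound, and your claim that the L\'evy property passes along a continuous surjective homomorphism is true (a witnessing sequence pushes forward to compact subgroups with dense union carrying the pushed-forward Haar measures, and preimages of identity neighbourhoods are identity neighbourhoods, exactly as you say). Your observation that $\psi_0$ must fail strong diffusion --- since otherwise Example~\ref{farahscolecki:l0:example} would force $L_0(\psi_0,F)$ to be L\'evy --- is also correct and is precisely the constraint the original construction works under.

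But what you have written is an outline, not a proof, and the gap sits exactly where the substance lies. You describe $\psi_0$ only by a wish-list (diffused; admitting, at arbitrarily deep levels, sparse ``robust'' families of atoms with $\psi_0$-value $\ge\epsilon_0$) and assert that a recursive construction with fast-growing $M_i$ achieves this --- yet you give no definition of $\psi_0$, and the tension you yourself flag in the final paragraph (diffusedness drives atoms to become small, robustness asks sparse unions of them to stay large) is precisely what the Farah--Solecki construction has to resolve by a specific, delicate choice. Likewise, the endgame --- the sets $A_n$, the single compact $F'$, and the ``Chernoff-type estimate'' separating $\mu_n(F'A_n)$ from $\mu_n(VA_n)$ --- is asserted but not carried out; you neither name $F'$ nor show why $VA_n$ fails to fill $K_n$. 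Without an explicit $\psi_0$ and explicit $A_n$, $F'$, the proposal does not establish the theorem: it is a (correct) plan whose execution is the entire difficulty.
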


In contrast, Sabok \cite[Theorem 1]{Sabok} has shown that the group $L_0(\phi, G)$, where $\phi$ is any diffused submeasure and $G$ is any abelian topological group, is always extremely amenable. Thus there is an abundance of examples of extremely amenable groups which do not have the L\'evy property. For instance, for any profinite abelian group $G$, there exists a diffused submeasure $\psi_0$ such that $L_0(\psi_0, G)$ is such a group.

Let us now discuss a method of Schneider \cite{Schneider} for proving that a group fails to have the L\'evy property. This method involves a concept of \emph{measure dissipation}. This concept was originally introduced for a sequence of mm-spaces (see \cite[Lemma 3.4]{Schneider}) and the more general case of uniform spaces (see \cite[Definition 3.12]{Schneider}), and here we employ a particular case of it for topological groups.

\begin{definition} \label{this:dissipate:def}
Let $G$ be a topological group. Let $\mathcal{G} = \{K_n\colon n \in \N^+\}$ be a sequence of compact subgroups of $G$, %
with $\mu_n$ being the normalized Haar measure on each $K_n$. For a open neighborhood $V$ of the identity of $G$, the sequence $\mathcal{G}$ is said to be \emph{$V$-dissipating} if there exists a collection of finite  families  $\{\mathcal{B}_n\colon n \in \N^+\}$  of Borel subsets of each corresponding $K_n$, %
with the following properties: \begin{itemize}
\item[(i)] for each $n \in \N^+$ and any two distinct $B,C \in \mathcal{B}_n$ we have $VB \cap C = \varnothing$ (we say that $B$ is \emph{$V$-separated} from $C$),
\item[(ii)] $\lim_n \mu_n ( \bigcup \mathcal{B}_n) = 1$; and
\item[(iii)] $\lim_n\sup\{\mu_n(B)\colon B \in \mathcal{B}_n\} = 0$. 
\end{itemize}  
$\mathcal{G}$ is said to be \emph{dissipating} if it is $V$-dissipating for some open neighborhood $V$ of the identity of $G$.
\end{definition}

The following is essentially proved in \cite[Lemma 3.4]{Schneider}.

\begin{lemma} \label{this:dissipation:translation}
Let $G$ be a topological group, let $V$ be an open neighborhood of the identity of $G$, and let $\mathcal{G} =  \{K_n\colon n \in \N^+\}$ be a $V$-dissipating sequence of compact subgroups of $G$, with $\mu_n$ being the normalized Haar measure on each $K_n$. Then, there exist $M \in \N^+$ and a sequence $\{A_n\colon n\geq M\}$ of Borel subsets of the corresponding $K_n$, such that
\begin{itemize}
\item[(a)] $\mu_n(A_n) \geq 1/2$ for all $n \geq M$, and
\item[(b)] $\limsup_{n} \mu_n(V A_n) \leq 3/4$.
\end{itemize}
\end{lemma}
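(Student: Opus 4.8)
The plan is to extract from the dissipating structure a single sequence of sets $\{A_n\}$ of measure at least $1/2$ whose $V$-enlargements fail to fill up more than $3/4$ of each $K_n$. The key observation is that if $\mathcal{B}_n = \{B_1, \dots, B_{k_n}\}$ is a family of pairwise $V$-separated Borel subsets of $K_n$ with $\mu_n(\bigcup \mathcal{B}_n)$ close to $1$ and all individual pieces of measure close to $0$, then one can greedily aggregate the $B_j$'s into two disjoint blocks, each of measure close to $1/2$, and the $V$-enlargement of either block is still disjoint from the other block. So first I would fix, using property (iii), an index $M$ large enough that for all $n \geq M$ we have both $\mu_n(\bigcup \mathcal{B}_n) > 7/8$ (using (ii)) and $\sup\{\mu_n(B) : B \in \mathcal{B}_n\} < 1/8$ (using (iii)).

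Next, for each $n \geq M$ I would define $A_n$ by the greedy procedure: enumerate $\mathcal{B}_n$ in some order and keep adding pieces $B_j$ to $A_n$ as long as $\mu_n(A_n) < 1/2$. Since each added piece has measure less than $1/8$, once the process stops we have $1/2 \le \mu_n(A_n) < 1/2 + 1/8 = 5/8$. Let $C_n$ be the union of the pieces in $\mathcal{B}_n$ \emph{not} placed in $A_n$; since $\mu_n(\bigcup \mathcal{B}_n) > 7/8$ and $\mu_n(A_n) < 5/8$, we get $\mu_n(C_n) > 7/8 - 5/8 = 1/4$. Because $A_n$ and $C_n$ are unions of distinct members of $\mathcal{B}_n$, property (i) of Definition~\ref{this:dissipate:def} gives $VA_n \cap C_n = \varnothing$: indeed $VA_n = \bigcup\{VB : B \subseteq A_n,\ B \in \mathcal{B}_n\}$ and each such $VB$ is disjoint from each member of $\mathcal{B}_n$ contained in $C_n$. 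Hence $\mu_n(VA_n) \le 1 - \mu_n(C_n) < 1 - 1/4 = 3/4$, which is even slightly stronger than what (b) demands, so in particular $\limsup_n \mu_n(VA_n) \le 3/4$, and (a) holds by construction. This establishes the lemma.

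I do not anticipate a serious obstacle here; the argument is a routine greedy-packing estimate and the only thing to watch is bookkeeping of the constants ($7/8$, $1/8$, $5/8$, $1/4$, $3/4$) so that the chain of inequalities closes with the requisite slack. One minor point that deserves a sentence of care is the identity $VA_n \cap C_n = \varnothing$: it uses that both $A_n$ and $C_n$ are \emph{exactly} unions of members of $\mathcal{B}_n$ (not arbitrary subsets), so that the pairwise $V$-separation in (i) applies piece-by-piece; writing $VA_n = \bigcup_{B \subseteq A_n} VB$ and invoking (i) for each pair makes this transparent. Everything else — measurability of $A_n$ and $C_n$ as finite unions of Borel sets, and the passage to $\limsup$ — is immediate.
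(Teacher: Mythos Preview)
Your argument is correct and follows essentially the same greedy-packing idea as the paper's proof: pick $M$ so that the pieces are small and their union is large, then aggregate pieces into a block $A_n$ of measure just past $1/2$ and observe that the remaining block $C_n$ has measure exceeding $1/4$ and is disjoint from $VA_n$. The paper merely phrases the constants abstractly via integers $p_1,p_2,q$ with $2p_1>q$, $4p_2>q$, $p_1+p_2<q-2$, whereas you fix the concrete thresholds $7/8$, $1/8$; the logic is identical.
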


\begin{proof} Fix three positive integers $p_1, p_2, q\in \N^+$ so that $p_1+p_2<q-2$, $2p_1>q$ and $4p_2>q$. Let $\{\mathcal{B}_n\colon n\in\N^+\}$ witness that $\mathcal{G}$ is $V$-dissipating.
Then there is $M \in \N^+$ such that for all $n \geq M$ \begin{equation} \label{eq:2}
\mu_n \left( \bigcup \mathcal{B}_n\right) \geq 1 - \frac{1}{q} \text{ and } \sup\left\{\mu_n(B)\colon B \in \mathcal{B}_n\right\} < \frac{1}{q}.
\end{equation}
\begin{claim} \label{claim:1}
For any $n\geq M$ and $i = 1,2$, there is a subfamily $\mathcal{C}_{n,i} \subseteq \mathcal{B}_n$ such that \begin{equation}\label{eq:3}
\frac{p_i}{q} \leq \mu_n\left(\bigcup \mathcal{C}_{n,i}\right) < \frac{p_i +1}{q}
\end{equation}
and for any pair $(B_1,B_2) \in \mathcal{C}_{n,1} \times \mathcal{C}_{n,2}$ the set $B_1$ is $V$-separated from $B_2$.
\end{claim}
\begin{proof}[Proof of Claim \ref{claim:1}] Fix $n\geq M$. Note that the first part of (\ref{eq:2}) gives $\mu(\bigcup\mathcal{B}_n)\geq (p_1+1)/q+(p_2+1)/q$. Let $\mathcal{C}_{n,1}$ be a subfamily of $\mathcal{B}_n$ of minimal cardinality such that \begin{equation}\label{eq:4}
\frac{p_1}{q} \leq \mu_n \left( \bigcup \mathcal{C}_{n,1}\right).
\end{equation}
Then the second part of (\ref{eq:2}) gives that $\mu_n(\bigcup \mathcal{C}_{n,1})<(p_1+1)/q$, since otherwise we get a subfamily of lower cardinality satisfying (\ref{eq:4}), a contradiction. Thus $\mathcal{C}_{n,1}$ satisfies (\ref{eq:3}).

Now let $\mathcal{D}_n = \mathcal{B}_n \setminus \mathcal{C}_{n,1}$. Since $\mu_n ( \bigcup \mathcal{C}_{n,1}) < (p_1+1)/q$, we get that $\mu(\mathcal{D}_n)>p_2/q$. %
Now let $\mathcal{C}_{n,2}$ be a subfamily of $\mathcal{D}_n$ of minimal cardinality such that 
\begin{equation*}
\frac{p_2}{q} \leq \mu_n \left( \bigcup \mathcal{C}_{n,2}\right).
\end{equation*}
Then a similar argument shows that (\ref{eq:3}) holds for $\mathcal{C}_{n,2}$. Since any two distinct members of $\mathcal{B}_n$ are $V$-separated from each other, and $\mathcal{C}_{n,1}$ and $\mathcal{C}_{n,2}$ are disjoint subfamilies of $\mathcal{B}_n$, the second part of Claim~\ref{claim:1} holds.
\end{proof}

For each $n\geq M$, fix subfamilies $\mathcal{C}_{n,1}$ and $\mathcal{C}_{n,2}$ satisfying Claim~\ref{claim:1}, and let
\begin{equation*}
A_n = \bigcup \mathcal{C}_{n,1} \text{ and } B_n = \bigcup \mathcal{C}_{n,2}.
\end{equation*}
We have that for each $n\geq M$, $A_n$ and $B_n$ are Borel and $V$-separated, i.e., $VA_n\cap B_n=\varnothing$. 
Moreover, $\mu_n(A_n)\geq p_1/q>1/2$ and $\mu_n(B_n)\geq p_2/q>1/4$. 
Since $VA_n\cap B_n=\varnothing$, we also have $\mu_n(VA_n)<1-\mu_n(B_n)<3/4$ for all $n\geq M$. Thus $\limsup_n\mu_n(VA_n)\leq 3/4$.
\end{proof}

The following is an immediate corollary of Lemma~\ref{this:dissipation:translation}.

\begin{proposition} \label{prop:3.5}Let $G$ be a topological group. Let $\mathcal{G}=\{K_n\colon n\in\N^+\}$ be a sequence of compact subgroups of $G$. Suppose $\bigcup K_n$ is dense in $G$ and $\mathcal{G}$ is dissipating. Then $G$ does not have the L\'evy property.
\end{proposition}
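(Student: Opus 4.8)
The plan is to argue by contradiction, so suppose $G$ has the L\'evy property. Since $\mathcal{G}$ is dissipating, I would fix an open neighborhood $V$ of the identity witnessing that $\mathcal{G}$ is $V$-dissipating, together with the families $\{\mathcal{B}_n : n \in \N^+\}$ from Definition~\ref{this:dissipate:def}. Feeding $V$ and $\{\mathcal{B}_n\}$ into Lemma~\ref{this:dissipation:translation} produces $M \in \N^+$ and Borel sets $A_n \subseteq K_n$ for $n \geq M$ with $\mu_n(A_n) \geq 1/2$ and $\limsup_n \mu_n(VA_n) \leq 3/4$. Setting $A_n := K_n$ for $n < M$, the sequence $\{A_n : n \in \N^+\}$ satisfies $\liminf_n \mu_n(A_n) \geq 1/2 > 0$, while still $\limsup_n \mu_n(VA_n) \leq 3/4 < 1$.

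The remaining task is to contradict the L\'evy property of $G$ using this sequence. When $\mathcal{G}$ is itself a witnessing sequence for the L\'evy property this is immediate from clause (ii) of the definition: $\liminf_n \mu_n(A_n) > 0$ forces $\lim_n \mu_n(VA_n) = 1$, against $\limsup_n \mu_n(VA_n) \leq 3/4$. For a general $\mathcal{G}$ I would instead invoke \cite[Lemma 4.1]{FarahSolecki}: a L\'evy group satisfies the minimal concentration inequality $\liminf_n [\mu_n(VA_n) - \mu_n(FA_n)] \geq 0$ along every increasing sequence of compact subgroups with dense union, for every open $V \ni e$ and every compact $F \subseteq G$. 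Applying this with the present $V$ and $A_n$, and choosing $F$ compact with $\mu_n(FA_n) \to 1$, gives $\liminf_n \mu_n(VA_n) \geq \liminf_n \mu_n(FA_n) = 1$, the desired contradiction.

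The hard part is the last sentence: producing the compact $F$ with $\mu_n(FA_n) \to 1$, which amounts to transferring the concentration of a genuine witnessing sequence $\mathcal{H} = \{H_i\}$ of $G$ onto the given sequence $\mathcal{G}$. Here I would use that $\bigcup_i H_i$ and $\bigcup_n K_n$ are both dense: fixing a symmetric open $W \ni e$ with $W^2 \subseteq V$, a compactness argument gives $K_n \subseteq W H_{i(n)}$ and, symmetrically, $H_i \subseteq W K_{j(i)}$, so the two exhaustions are mutually cofinal modulo $W$. The delicate point, and the real obstacle, is that under such a mutual $W$-approximation of compact subgroups the normalized Haar measures are comparable up to a factor controlled by $W$; granting this, one pushes the sets $A_n$ into $H_{i(n)}$ with only a small loss of measure and a small enlargement of the separating neighborhood, contradicting that $\mathcal{H}$ witnesses the L\'evy property of $G$. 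For non-increasing $\mathcal{G}$ one runs this comparison directly, bypassing \cite[Lemma 4.1]{FarahSolecki}.
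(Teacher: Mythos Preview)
The paper offers no argument beyond the sentence ``immediate corollary of Lemma~\ref{this:dissipation:translation}.'' That is literally immediate only in the special case where the dissipating sequence $\mathcal{G}$ is itself a witnessing sequence for the L\'evy property: Lemma~\ref{this:dissipation:translation} then produces $A_n\subseteq K_n$ with $\liminf_n\mu_n(A_n)\geq 1/2$ and $\limsup_n\mu_n(VA_n)\leq 3/4$, contradicting clause~(ii) of the definition. Your first paragraph reproduces exactly this, so you already have everything the paper actually proves. For the paper's only use of the proposition (Corollary~\ref{cor:nonLevy}) this suffices, because Theorem~\ref{schneider:thm} says that \emph{every} increasing sequence of compact subgroups with dense union---in particular any L\'evy witness---dissipates.

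You are right that the general statement, where $\mathcal{G}$ is an arbitrary sequence possibly unrelated to any L\'evy witness, is not covered by this one line, and your instinct to bring in the Farah--Solecki lemma is reasonable. But the argument you sketch does not close. The decisive step is manufacturing a compact $F$ with $\mu_n(FA_n)\to 1$, and your route to it rests on the assertion that when $K_n\subseteq WH_{i(n)}$ the normalized Haar measures on $K_n$ and on $H_{i(n)}$ are ``comparable up to a factor controlled by $W$.'' These are Haar measures on two different compact groups of possibly very different cardinalities/volumes; a set-theoretic $W$-inclusion gives no control on the ratio of the measures, so this step is unproved (and, as stated, need not hold). Note also that the Farah--Solecki lemma requires the sequence $\{K_n\}$ to be increasing, which Proposition~\ref{prop:3.5} does not assume, so even the invocation of that lemma needs an extra reduction. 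In short: your proof of the case the paper actually needs is correct and matches the paper; your attempt at the stronger reading of the proposition has a genuine gap at the Haar-measure comparison.
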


The following is one of the main results of Schneider \cite{Schneider}, which states that the above hypotheses hold for metrizable groups which admit open subgroups of infinite index.

\begin{theorem}[Schneider {\cite[Corollary 5.3]{Schneider}}] \label{schneider:thm}

If $G$ is a metrizable group with an open subgroup of infinite index, then any increasing sequence $\{K_n: i \in \N^+\}$ of compact subgroups of $G$ with $\bigcup K_n$ dense in $G$ is dissipating.
\end{theorem}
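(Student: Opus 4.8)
The plan is to construct, for a fixed increasing sequence $\{K_n : n \in \N^+\}$ of compact subgroups of $G$ with $\bigcup K_n$ dense, an open neighborhood $V$ of the identity and finite families $\mathcal{B}_n$ of Borel subsets of $K_n$ witnessing $V$-dissipation in the sense of Definition~\ref{this:dissipate:def}. Let $H$ be the given open subgroup of infinite index. The natural choice is $V = H$: since $H$ is open it is a legitimate neighborhood of the identity, and for $a, b \in K_n$ the sets $Ha$ and $Hb$ (intersected with $K_n$) are either equal or disjoint, so the $V$-separation condition (i) becomes the statement that distinct members of $\mathcal{B}_n$ lie in distinct left cosets of $H$. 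Thus the whole problem reduces to understanding the partition of each $K_n$ into the (finitely many, by compactness) cosets $Ha \cap K_n$, $a \in K_n$.

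First I would fix a metric $\rho$ on $G$ (here metrizability is used), and record the elementary fact that $\mu_n$-mass of a single coset class $Ha \cap K_n$ equals $\mu_n(H_n)$ where $H_n = H \cap K_n$, because $\mu_n$ is translation-invariant on $K_n$ and the nonempty intersections $K_n \cap Ha$ are precisely the cosets of $H_n$ in $K_n$. Hence $\mathcal{B}_n := \{\text{all cosets of } H_n \text{ in } K_n\}$ automatically satisfies (i), and it satisfies (ii) trivially since $\bigcup \mathcal{B}_n = K_n$, so $\mu_n(\bigcup \mathcal{B}_n) = 1$. The crux is condition (iii): $\sup\{\mu_n(B) : B \in \mathcal{B}_n\} = \mu_n(H_n) = 1/[K_n : H_n] \to 0$, i.e., the indices $[K_n : H_n]$ must tend to infinity. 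This is where the hypothesis that $[G : H] = \infty$ and that $\bigcup K_n$ is dense in $G$ must be combined: if the indices $[K_n : H_n]$ stayed bounded, say by $N$ along a subsequence, I would derive that the union $\bigcup K_n$ meets only boundedly many cosets of $H$ near the identity and use density plus openness of $H$ to conclude $[G:H] \le N$, contradicting infinite index. Concretely, density of $\bigcup_n K_n$ means every coset of the open set $H$ that is met by $G$ is already met by some $K_n$, and the cosets of $H$ met by $K_n$ correspond exactly to the $[K_n : H_n]$ cosets of $H_n$; since the sequence $K_n$ is increasing, $[K_n : H_n]$ is nondecreasing, so a bound $N$ on all of them forces $\bigcup_n K_n$ to lie in at most $N$ cosets of $H$, whence by density $G$ lies in the closure of $N$ cosets of $H$, and since each coset of the open subgroup $H$ is clopen, $G$ is a union of $\le N$ cosets of $H$, i.e. $[G:H] \le N < \infty$.

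I expect the main obstacle to be precisely this last step: making rigorous that bounded indices $[K_n : H_n]$, together with density of $\bigcup K_n$ and openness of $H$, force $[G:H]$ finite. The subtlety is that a priori the particular cosets of $H$ hit by $K_n$ could vary with $n$, so one cannot just say "$K_n$ lies in $N$ fixed cosets"; one must use that the sequence is increasing (so the set of cosets of $H$ met is nondecreasing in $n$) to get a single finite set of $\le N$ cosets whose union contains every $K_n$ and hence, by density and the fact that a finite union of cosets of an open subgroup is closed, contains all of $G$. Once (iii) is established, Definition~\ref{this:dissipate:def} is satisfied with $V = H$, so $\mathcal{G}$ is dissipating, which is exactly the conclusion. (As a byproduct, combining this with Proposition~\ref{prop:3.5} gives that such a $G$ has no L\'evy witnessing sequence at all, recovering Corollary~\ref{cor:nonLevy}.)
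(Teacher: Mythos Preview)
The paper does not actually give its own proof of this theorem: it is stated as a citation to Schneider \cite[Corollary 5.3]{Schneider} and no argument is provided. So there is nothing in the paper to compare your proposal against directly.

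That said, your proposal is a correct and complete argument. The choice $V=H$, the partition $\mathcal{B}_n$ of $K_n$ into cosets of $H_n=H\cap K_n$, and the verification of (i), (ii), (iii) in Definition~\ref{this:dissipate:def} all go through exactly as you describe. The key step, that $[K_n:H_n]\to\infty$, is handled correctly: the set of $H$-cosets met by $K_n$ is nondecreasing in $n$ (since the $K_n$ increase), so a bound on the indices forces $\bigcup_n K_n$ into a fixed finite union of cosets of $H$, and density together with the clopenness of such a union gives $[G:H]<\infty$, contradicting the hypothesis. Two small remarks: you write ``left cosets'' but use the notation $Ha$, which is a right coset; the argument in fact uses right cosets throughout (and this is what matches the separation condition $VB\cap C=\varnothing$ with $V=H$), so just fix the wording. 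Also, you invoke metrizability to ``fix a metric $\rho$'' but never use it; your argument works for arbitrary topological groups, so you may drop that sentence.
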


Recall that a topological group $G$ is \emph{Polish} if it is separable and completely metrizable, $G$ is \emph{non-Archimedean} if it admits a neighborhood basis of the identity which consists of open subgroups of $G$, and $G$ is \emph{precompact} (or \emph{totally bounded}) if for each open neighborhood $U$ of the identity of $G$, there exists a finite set $F \subseteq G$ such that $UF=FU=G$ (we say that $U$ is of \emph{finite index} in $G$).

A  prototypical  example of a non-Archimedean Polish group is $S_\infty$, the group of all permutations of $\mathbb{N}$ with the pointwise convergence topology (where $\mathbb{N}$ is equipped with the discrete topology). By a classical result of Becker and Kechris (see \cite{BeckerKechris}; also see \cite[Section 2.4]{GaoBook}), a topological group $G$ is a non-Archimedean Polish group if and only if $G$ is isomorphic to a closed subgroup of $S_\infty$, and if and only if $G$ is isomorphic to an automorphism group of a countable structure. 

Also note that a topological group $G$ is precompact if and only if it is isomorphic to a subgroup of a compact group (see \cite[Corollary 3.7.17]{TkachenkoBook}). If $G$ is a non-Archimedean group, then $G$ is not precompact if and only if it admits an open subgroup of infinite index.

Combining all of the above, we obtain the following corollary.

\begin{corollary}\label{cor:nonLevy} No non-Archimedean Polish groups can have the L\'evy property.
\end{corollary}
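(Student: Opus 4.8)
The plan is to assume, towards a contradiction, that a non-Archimedean Polish group $G$ has the L\'evy property, witnessed by an increasing sequence $\mathcal{G}=\{K_n : n\in\N\}$ of compact subgroups with $\bigcup\mathcal{G}$ dense in $G$ and satisfying the concentration condition (where $\mu_n$ denotes the normalized Haar measure on $K_n$). I would then split into two cases according to whether $G$ is precompact, the point being that exactly one of the two cases is handled by Schneider's results as quoted above. (The statement is of course read for nontrivial $G$: the one-point group satisfies the definition vacuously.)

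\textbf{Case 1: $G$ is not precompact.} Since $G$ is non-Archimedean, the remark just before the corollary produces an open subgroup of $G$ of infinite index, and being Polish, $G$ is metrizable. Hence Schneider's Theorem~\ref{schneider:thm} applies to the sequence $\{K_n\}$ and shows that $\mathcal{G}$ is dissipating. Proposition~\ref{prop:3.5} then yields that $G$ does \emph{not} have the L\'evy property, contradicting our assumption.

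\textbf{Case 2: $G$ is precompact.} Schneider's theorem no longer applies (a precompact non-Archimedean group has no open subgroup of infinite index), so here I would argue directly. Non-Archimedeanness gives a proper open subgroup of $G$, precompactness forces it to have finite index, and passing to its normal core we obtain a proper open \emph{normal} subgroup $N\lneq G$, say of index $m:=[G:N]\ge 2$. Since $N$ is clopen and $\bigcup_n K_n$ is dense, one checks that $\bigcup_n K_nN=G$, and then finiteness of $G/N$ gives some $M$ with $K_nN=G$ for all $n\ge M$; for such $n$ the restriction $\pi|_{K_n}\colon K_n\to G/N$ of the quotient map is a surjective homomorphism with kernel $K_n\cap N$, so $\mu_n(K_n\cap N)=1/m$. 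Now take $V=N$ and $A_n=K_n\cap N$ for $n\ge M$ (and $A_n=K_n$ otherwise): then $\liminf_n\mu_n(A_n)=1/m>0$, while $VA_n=N(K_n\cap N)\subseteq N$ gives $\mu_n(VA_n)\le\mu_n(K_n\cap N)=1/m<1$ for all $n\ge M$. Hence $\lim_n\mu_n(VA_n)\neq 1$, again contradicting the L\'evy property.

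Both cases being impossible, no nontrivial non-Archimedean Polish group can have the L\'evy property. The crux is really the case division itself: Theorem~\ref{schneider:thm} covers precisely the non-precompact groups, so the precompact ones — which include genuinely infinite examples such as $\prod_n\Z/2\Z$ and the $p$-adic integers $\Z_p$ — must be handled on their own. In that case the only thing to do is to extract a nontrivial finite quotient $G/N$ from the non-Archimedean precompact structure and observe that the identity neighborhood $V=N$ cannot expand the sets $A_n=K_n\cap N$, which is exactly the failure of concentration; this part is short, so the real work has already been done in quoting Schneider's dissipation theorem.
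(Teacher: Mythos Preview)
Your proof is correct. Case~1 matches the paper's use of Schneider's theorem exactly, but Case~2 differs: the paper disposes of the precompact case by invoking the Gromov--Milman implication (L\'evy $\Rightarrow$ extremely amenable), observing that a precompact non-Archimedean Polish group is in fact compact (being a closed subgroup of $S_\infty$, hence complete), and noting that a nontrivial compact group acting on itself by translation has no fixed point, contradicting extreme amenability. Your argument instead works directly with the L\'evy definition, producing a finite-index open normal subgroup $N$ and exhibiting explicit sets $A_n=K_n\cap N$ on which the neighborhood $V=N$ fails to concentrate. Your route is more self-contained in that it does not appeal to extreme amenability at all; the paper's route is shorter and more conceptual, leaning on two standard facts (L\'evy $\Rightarrow$ EA, and nontrivial compact $\Rightarrow$ not EA) rather than a hands-on measure computation. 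Both correctly isolate the trivial group as the degenerate exception.
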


\begin{proof} Toward a contradiction, assume that $G$ is a non-Archimedean Polish group with the L\'evy property. Then $G$ is extremely amenable. In view of Theorem~\ref{schneider:thm} and Proposition~\ref{prop:3.5}, it suffices to show that $G$ is not precompact. However, if $G$ were precompact, then as an isomorph of a closed subgroup of $S_\infty$, it is itself a complete group and therefore compact (since its completion would be compact). Consider the action of $G$ on itself by left translation. This action does not have a fixed point. Hence $G$ cannot be extremely amenable, a contradiction.
\end{proof}

 Equivalently,  no closed subgroups of $S_\infty$ or automorphism groups of countable structures (with the pointwise convergence topology, where the underlying countable domain is equipped with the discrete topology) can have the L\'evy property.

\section{Two Classes of Countable L\'evy Groups}\label{section:newcountable}

The following example is motivated by the Gromov--Milman Example~\ref{ex:1}.

\begin{example}\label{ex:3} Consider the directed set $D=(\mathbb{N}^+, \sqsubseteq)$, where $n\sqsubseteq m$ iff $n\,|\, m$. For each $n\in D$, let $E_2^n$ be the group defined in Example~\ref{ex:1}. If $n\sqsubseteq m$, let $\varphi_{n, m}\colon E_2^n \to E_2^m$ be the group embedding
$$ \varphi_{n,m}(g_1,\dots, g_n)=(g_1, \dots, g_n, g_1, \dots, g_n, \cdots, g_1, \dots, g_n). $$
Thus $\varphi_{n,m}$ is a diagonal embedding of $E_2^n$ into $E_2^m$. Let $Z_2^\infty$ be the direct limit of the direct system $\{E_2^n, \varphi_{n,m}\colon n, m\in D, n\sqsubseteq m\}$. Like $E_2^\infty$, $Z_2^\infty$ is a countable abelian group. For each $n\in D$ and $g=(g_i), h=(h_i)\in E_2^n$, define
$$ \rho_n(g, h)=|\{i\colon g_i\neq h_i\}| $$
as in Example~\ref{ex:1}. Then $\rho_n/n$ is a metric on $E_2^n$, and $\varphi_{n,m}$ is an isometric embedding from $(E_2^n, \rho_n/n)$ into $(E_2^m, \rho_m/m)$ when $n\sqsubseteq m\in D$. Thus we can define the following metric $\delta_\infty$ on $Z_2^\infty$. For $x,y\in Z_2^\infty$, let $n\in D$ be large enough and $g, h\in E_2^n$ be such that $x=\varphi_{n, \infty}(g)$ and $y=\varphi_{n, \infty}(h)$. Then define 
$$ \delta_\infty(x,y)=\displaystyle\frac{\rho_n(g,h)}{n}. $$
Now $(Z_2^\infty, \delta_\infty)$ is a group with the L\'evy property. 
\end{example}

To justify the L\'evy property of $Z_2^\infty$, we use the following concept of \emph{concentration function} and result (see Pestov \cite[Theorem 4.3.19]{Pestov2}) proved using a martingale technique. Similar technique and results can be traced back to the work of Milman--Schechtman \cite{Schechtman2} and Schechtman \cite{Schechtman1}. It is well known (see \cite[Exercise 1.3.1]{Pestov2}) that in considering the measure concentration, it is enough to focus on sets whose measures are bounded away from zero by an apriori chosen constant between 0 and 1. In the rest of this paper, we follow Pestov \cite{Pestov2} and use $1/2$.

\begin{definition}[{\cite[Definition 1.3.2]{Pestov2}}]\label{def:confun}
Given a metric-measure space  $(X,d,\mu)$, the \emph{concentration function of $X$} is a real-valued function $\alpha_X\colon  [0,\infty) \to [0,\infty)$ such that $\alpha_X(0) = 1/2$, and for each $\varepsilon > 0$, \begin{equation}
\alpha_X(\varepsilon) = 1 - \inf{\left\{\mu(B_\varepsilon): B \subseteq X, \mu(B) \geq 1/2\right\}},
\end{equation}
where $B_\varepsilon=\{y\in X\colon d(x,y)<\varepsilon \mbox{ for some $x\in B$}\}$.
\end{definition}

\begin{theorem}[{\cite[Theorem 4.3.19]{Pestov2}}]\label{thm:mar}
Let $(X_i,d_i,\mu_i)$, $i=1,2, \dots, n$, be metric-measure spaces, with $\diam X_i=a_i$. Equip the product space $X=\prod_{i=1}^n X_i$ with the product measure $\otimes_{i=1}^n \mu_i$ and the Hamming metric
$$ d(x,y)=\displaystyle\sum_{i=1}^n d_i(x_i, y_i).$$
Then the concentration function of $X$ satisfies
$$ \alpha_X(\varepsilon)\leq 2e^{-\varepsilon^2/(8\sum_{i=1}^n a_i^2)}. $$
\end{theorem}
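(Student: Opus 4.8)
The plan is to prove this by the martingale (``bounded differences'') method of Milman and Schechtman: one reduces measure concentration on the product $X$ to an exponential tail estimate for a single $1$-Lipschitz function, and obtains that estimate from the Doob martingale associated to the product measure together with the Azuma--Hoeffding inequality.

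First I would reduce the statement to a tail bound for distance functions. Unwinding Definition~\ref{def:confun}, one has $\alpha_X(\varepsilon)=\sup\{\mu(X\setminus B_\varepsilon)\colon B\subseteq X\text{ Borel},\ \mu(B)\ge 1/2\}$, where $\mu=\otimes_{i=1}^n\mu_i$. Given such a set $B$, I would work with $f(x)=d(x,B)=\inf_{b\in B}d(x,b)$, where $d$ is the Hamming metric. This $f$ is $1$-Lipschitz for $d$, satisfies $\mu(\{f=0\})\ge\mu(B)\ge 1/2$ (so $0$ is a median of $f$), and $X\setminus B_\varepsilon=\{x\colon f(x)\ge\varepsilon\}$; hence it suffices to show $\mu(\{f\ge\varepsilon\})\le 2e^{-\varepsilon^2/(8\sigma^2)}$, with $\sigma^2=\sum_{i=1}^n a_i^2$, for every such $f$. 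I would also remark that $d(\cdot,B)$ is Borel and that the factor spaces $X_i$ are tacitly assumed regular enough (e.g.\ Radon) for the conditional-measure manipulations below to be legitimate.

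Next I would form the Doob martingale: with $\mathcal{F}_k$ the $\sigma$-algebra generated by the first $k$ coordinate projections of $X=\prod_{i=1}^n X_i$, set $M_k=\mathbb{E}[f\mid\mathcal{F}_k]$, so that $M_0=\mathbb{E}[f]$ and $M_n=f$ almost everywhere. The crucial step is the increment bound: fixing $x_1,\dots,x_{k-1}$ and integrating out the later coordinates, the function $x_k\mapsto\mathbb{E}[f\mid x_1,\dots,x_k]$ has oscillation at most $a_k=\diam X_k$, because changing only the $k$-th coordinate alters $f$ by at most $d_k(x_k,x_k')\le a_k$; this is precisely where the product structure, Fubini's theorem, and the additive form of the Hamming metric enter, and it gives $|M_k-M_{k-1}|\le a_k$ almost everywhere. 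Azuma--Hoeffding then yields, for every $t>0$, both $\mu(\{f-\mathbb{E}[f]\ge t\})\le e^{-t^2/(2\sigma^2)}$ and the symmetric lower-tail estimate. Taking $t=\mathbb{E}[f]$ in the lower tail and using $\mu(\{f\le 0\})\ge 1/2$ forces $\mathbb{E}[f]\le\sigma\sqrt{2\ln 2}$, i.e.\ the mean of $f$ is controlled by the ``width'' $\sigma$.

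Finally I would combine these. If $\varepsilon\le 2\sigma\sqrt{2\ln 2}$ then $2e^{-\varepsilon^2/(8\sigma^2)}\ge 1\ge\alpha_X(\varepsilon)$, so there is nothing to prove; and if $\varepsilon>2\sigma\sqrt{2\ln 2}\ge 2\,\mathbb{E}[f]$ then $\varepsilon-\mathbb{E}[f]\ge\varepsilon/2$, so the upper-tail bound gives $\mu(\{f\ge\varepsilon\})=\mu(\{f-\mathbb{E}[f]\ge\varepsilon-\mathbb{E}[f]\})\le e^{-(\varepsilon/2)^2/(2\sigma^2)}=e^{-\varepsilon^2/(8\sigma^2)}$, which is even slightly stronger than claimed (the factor $8$ being exactly the cost of the mean-to-median passage on top of Azuma's constant). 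The hard part, I expect, is purely the increment bound in the martingale step: making the conditional-expectation and Fubini bookkeeping rigorous — choosing regular conditional distributions along the coordinates and handling the relevant null sets — is where the hypotheses on the $X_i$ and the additive shape of $d$ are genuinely consumed, whereas the reduction of Step~1, the invocation of Azuma--Hoeffding, and the elementary arithmetic of the last step are routine. (If one prefers not to cite Azuma--Hoeffding, it follows in a few lines from Hoeffding's lemma applied to each bounded mean-zero increment conditionally on $\mathcal{F}_{k-1}$.)
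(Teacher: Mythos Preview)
The paper does not supply its own proof of this statement: Theorem~\ref{thm:mar} is quoted verbatim from Pestov's monograph \cite[Theorem~4.3.19]{Pestov2} and used as a black box, with only the remark that it is ``proved using a martingale technique'' tracing back to Milman--Schechtman \cite{Schechtman2} and Schechtman \cite{Schechtman1}. Your proposal is a correct execution of precisely that martingale technique (Doob martingale along the coordinate filtration, the bounded-differences increment estimate $|M_k-M_{k-1}|\le a_k$, Azuma--Hoeffding, and the mean-to-median passage), so there is nothing to compare against and nothing to correct.
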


By Theorem~\ref{thm:mar}, the concentration function of $X=(E_2^n, \rho_n/n, \mu_n)$ (where $\mu_n$ is the counting measure) satisfies $\alpha_X(\varepsilon)\leq 2e^{-n\varepsilon^2/8}$. Thus it goes to $0$ as $n\to\infty$. This shows that $(Z_2^\infty, \delta_\infty)$ has the L\'evy property.

We remark that the construction in Example~\ref{ex:3} yields the same group as given by the following linear direct system. Let $(n_k)$ be an increasing sequence of positive integers so that $n_k\,|\, n_{k+1}$ (such sequences are called {\em scales} in the literature of topological dynamics). Let $\mathfrak{n}$ be the supernatural number that is the least common multiple of $(n_k)$. The scale $(n_k)$ is said to be \emph{universal} if for any prime $p$, $p^\infty$ is a factor of $\mathfrak{n}$. Equivalently, $(n_k)$ is universal if and only if for any $m\in \mathbb{N}^+$ there is some $k$ such that $m\,|\, n_k$, and if and only if $(n_k)$ is a cofinal sequence in the directed set $(D, \sqsubseteq)$. If $(n_k)$ is a universal scale, the direct system $(E_2^{n_k}, \varphi_{n_k, n_{k+1}})$ gives rise to a direct limit which is isomorphic to $Z_2^\infty$. 

The next example comes from a similar but more general construction.

\begin{example}\label{ex:4} Let $\mathfrak{n}$ be any supernatural number. Consider the directed set $D=(N, \sqsubseteq)$, where $N$ is the set of all positive finite factors of $\mathfrak{n}$, and $\sqsubseteq$ is the factor relation as in Example \ref{ex:3}. As in Example~\ref{ex:3}, we define $\varphi_{n,m}$ for $n\sqsubseteq m$, $\rho_n$ for each $n\in D$, and a similar $\delta_\infty$.  Let $Z_2^{\mathfrak{n}}$ be the direct limit of the direct system $\{E_2^n, \varphi_{n,m}\colon n, m\in D, n\subseteq m\}$. Then $(Z_2^{\mathfrak{n}}, \delta_\infty)$ is a group with the L\'evy property.
\end{example}

If $(n_k)$ is any scale with $\mathfrak{n}$ as the least common multiple of $(n_k)$. Then the direct system $(E_2^{n_k}, \varphi_{n_k, n_{k+1}})$ gives a direct limit that is isomorphic to $Z_2^{\mathfrak{n}}$. The L\'evy property of $Z_2^{\mathfrak{n}}$ follows from a similar argument as for Example~\ref{ex:3}. Note that all examples in Examples~\ref{ex:1}, \ref{ex:3} and \ref{ex:4} are abelian and of exponent $2$, and they are all isomorphic to each other as algebraic groups. We do not know, however, if they are isomorphic as topological groups.

The following examples are motivated by Example~\ref{ex:2} above, but the constructions are similar to $Z_2^\infty$ and $Z_2^{\mathfrak{n}}$.

\begin{example}\label{ex:5} Let $(n_k)$ be any scale and let $\mathfrak{n}=\lcm(n_k)$. For each $k$, let $q_k=n_{k+1}/n_k$. Consider the direct system of finite groups
$$ G_0\stackrel{\varphi_0}{\longrightarrow} G_1\stackrel{\varphi_1}{\longrightarrow} G_2 \stackrel{\varphi_2}{\longrightarrow} \cdots \stackrel{\varphi_{k-1}}{\longrightarrow} G_k \stackrel{\varphi_k}{\longrightarrow} G_{k+1}\stackrel{\varphi_{k+1}}{\longrightarrow} \cdots $$
where $G_k=\mbox{Sym}(X_{n_k})$ is the symmetric group of the $n_k$-element set $X_{n_k}=\{0,\dots, n_k-1\}$, and $\varphi_k\colon G_k\to G_{k+1}$ is the group embedding defined by 
$$\varphi_k(g)(jn_k+i)=jn_k+g(i) $$
for all $j=0, \dots, q_k-1$ and $i=0, \dots, n_k-1$. Let $G_{\mathfrak{n}}$ be the direct limit. Define a metric $\rho_k$ on each $G_k$ as in Example~\ref{ex:3}. Then $\varphi_k$ is an isometric embedding from $(G_k, \rho_k/n_k)$ into $(G_{k+1}, \rho_{k+1}/n_{k+1})$. This allows us to define a metric $\delta_{\mathfrak{n}}$ on $G_{\mathfrak{n}}$. We have that $(G_{\mathfrak{n}}, \delta_{\mathfrak{n}})$ has the L\'evy property. 
\end{example}

We call a countable group $G$ \emph{universal for finite groups} if it contains an isomorphic copy of every finite group as a subgroup. The group $S_\omega$ in Example~\ref{ex:2} and the groups $G_{\mathfrak{n}}$ in Example~\ref{ex:5} are all universal for finite groups. In particular, they are nonabelian, and therefore are not isomorphic to the groups $E_2^\infty$, $Z_2^\infty$ and $Z_2^{\mathfrak{n}}$. 

We show below that $S_\omega$ is not isomorphic to any of the groups $G_{\mathfrak{n}}$. This is done by considering 
an algebraic property known as MIF. 

\begin{definition}[Hull--Osin {\cite[Definition 5.2]{HO}}]
Let $G$ be a group. Given a free group $F_n$ of rank $n$ generated by letters $v_1,\dots,v_n$, a \emph{non-trivial mixed identity} in the group $G$ is a word $w(v_1,\dots,v_n) \in (G * F_n) \setminus G$ such that $w(g_1,\dots, g_n) = e$ for all $g_1,\dots,g_n \in G$, where $e$ is the identity of $G$. If there are no non-trivial mixed identities in $G$, then we say that $G$ is \emph{mixed identity free} (\emph{MIF}).
\end{definition} 

Hull and Osin \cite[Theorem 5.9]{HO} showed that for a highly transitive countable subgroup $G$ of $S_\infty$, $G$ is not MIF if and only if $G$ contains a normal subgroup isomorphic to $A_\omega$, where $A_\omega$ is the subgroup of $S_\omega$ consisting of all even permutations. In particular, this implies that $S_\omega$ is not MIF. In contrast, we show below that $G_{\mathfrak{n}}$ is MIF for any infinite supernatural number $\mathfrak{n}$. 

\begin{proposition} For any scale $(n_k)$ with $\mathfrak{n}=\lcm(n_k)$, $G_{\mathfrak{n}}$ is MIF.
\end{proposition}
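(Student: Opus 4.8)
The plan is to show that $G_{\mathfrak{n}}$ is MIF by exploiting the large, highly homogeneous structure of the groups $\mathrm{Sym}(X_{n_k})$ sitting inside $G_{\mathfrak{n}}$ together with the diagonal-type embeddings $\varphi_k$. Recall that $G_{\mathfrak{n}}$ is a locally finite group, and each element $g\in G_{\mathfrak{n}}$ lives in some $G_k=\mathrm{Sym}(X_{n_k})$. Suppose toward a contradiction that $w(v_1,\dots,v_n)\in (G_{\mathfrak{n}}*F_n)\setminus G_{\mathfrak{n}}$ is a non-trivial mixed identity. Write $w = h_0 v_{i_1}^{\varepsilon_1} h_1 v_{i_2}^{\varepsilon_2}\cdots v_{i_\ell}^{\varepsilon_\ell} h_\ell$ in reduced form, where the $h_j\in G_{\mathfrak{n}}$ and $\ell\geq 1$ (so that $w$ genuinely involves the free letters). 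Choose $k$ large enough that all the constants $h_0,\dots,h_\ell$ lie in $G_k=\mathrm{Sym}(X_{n_k})$.

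The key point I would establish is a \emph{separation/freeness} statement: inside $G_{k+1}=\mathrm{Sym}(X_{n_{k+1}})$, viewed via $\varphi_k$ as containing $q_k$ interleaved diagonal copies of $\mathrm{Sym}(X_{n_k})$, one can find substitutions $g_1,\dots,g_n\in G_{k+1}$ for the free variables so that $w(g_1,\dots,g_n)\neq e$. Concretely, the constants $h_j$, when promoted to $G_{k+1}$ via $\varphi_k$, act block-diagonally on the $q_k\geq 2$ blocks $\{jn_k+i : 0\leq i<n_k\}$, $0\le j<q_k$; by contrast, I am free to pick $g_1,\dots,g_n\in G_{k+1}$ that \emph{mix} the blocks, e.g.\ using permutations that do not preserve this block partition. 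Since $q_k\to\infty$ (or at least $q_k\geq 2$ for infinitely many $k$ when $\mathfrak n$ is infinite), there is plenty of room to realize the reduced word $w$ freely: I would pick a finite subset $Y$ of $X_{n_{k+1}}$ large enough to "witness" the reduced form of $w$, arrange the $g_{i}$ so that successive applications of $g_{i_t}^{\varepsilon_t}$ and the constants $h_j$ never collapse, and conclude that $w(g_1,\dots,g_n)$ moves some point of $Y$. This is the standard trick that symmetric groups on large enough sets contain free products / satisfy no mixed identities, adapted to track the block structure imposed by the $\varphi_k$.

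The cleanest way to organize the combinatorics is probably to invoke (or reprove in one line) the fact that $\mathrm{Sym}(\Omega)$ is MIF for any infinite $\Omega$, or more precisely the finitary statement that for any finite list of permutations $h_0,\dots,h_\ell$ of a finite set and any reduced word shape, one can extend to a larger finite symmetric group and substitute to make the word nontrivial — and then observe that the embeddings $\varphi_k$ do not obstruct this because one always has an extra free block (this is exactly where $\mathfrak n$ infinite, equivalently $q_k\geq 2$ infinitely often, is used). Since $G_{\mathfrak n}=\varinjlim G_k$ contains all the $g_i$ and the value $w(g_1,\dots,g_n)$ computed in $G_{k+1}$ agrees with that computed in $G_{\mathfrak n}$, we get $w(g_1,\dots,g_n)\neq e$ in $G_{\mathfrak n}$, contradicting that $w$ was a mixed identity. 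Hence $G_{\mathfrak n}$ is MIF.

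The main obstacle I anticipate is bookkeeping: making precise the claim that a reduced word $w$ with symmetric-group constants can always be "escaped" after passing to $G_{k+1}$, while being careful that the free-letter substitutions $g_i$ are chosen consistently across all occurrences $v_{i_1},\dots,v_{i_\ell}$ (the same letter may appear several times with different exponents). One handles this by choosing a sufficiently generic point $x_0$ and tracking its orbit segment $x_0, g_{i_\ell}^{\varepsilon_\ell}x_0, h_{\ell-1}g_{i_\ell}^{\varepsilon_\ell}x_0,\dots$, insisting at each step that the new point is fresh (not previously visited) by using the available room in $X_{n_{k+1}}$ — in particular using that the constants $h_j$ preserve the block partition while the $g_i$ need not, so freshness can always be arranged in a not-yet-used block. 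Once this "no collapse along a generic trajectory" is set up, the conclusion $w(g_1,\dots,g_n)x_0\neq x_0$ is immediate. The rest is routine.
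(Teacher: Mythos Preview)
Your orbit-tracking idea is in the right spirit, but the proposal contains a genuine error in the part you label ``the cleanest way to organize the combinatorics.'' The claim that $\mathrm{Sym}(\Omega)$ is MIF for infinite $\Omega$ is false: the paper itself notes, immediately before this proposition, that $S_\omega$ is \emph{not} MIF (it contains $A_\omega$ as a normal subgroup, so Hull--Osin's criterion applies). Correspondingly, your ``finitary statement'' --- that constants in a finite symmetric group can always be escaped after extending to a larger symmetric group --- is false when the extension is the standard one fixing new points, since that is exactly the $S_\omega$ situation. So you cannot invoke it as a black box and then argue ``$\varphi_k$ does not obstruct.'' The logic is backwards: it is precisely the \emph{diagonal} nature of $\varphi_k$ that makes $G_{\mathfrak n}$ different from $S_\omega$ and makes the argument work. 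Under $\varphi_k$, every nontrivial constant acts nontrivially on \emph{every} block (equivalently, has infinite support as a permutation of $\mathbb N$), and that is the structural fact you must use, not bypass.

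With that correction, your hands-on block/freshness argument can be made rigorous, but it is doing more work than needed. The paper's proof is shorter and different in organization: it first reduces to a single free variable by substituting $v_1=\cdots=v_n=v$, writes the resulting word as $v^{\alpha_{p+1}} z_p v^{\alpha_p}\cdots z_1 v^{\alpha_1}$ with each $z_i$ a nontrivial constant, observes that each $z_i$ has \emph{infinite support} in $\mathrm{Sym}(\mathbb N)$ (this is exactly where the diagonal embeddings enter), and then invokes \cite[Lemma~5.6]{HO} to produce $t\in\mathrm{Sym}(\mathbb N)$ and $n\in\mathbb N$ with $w'(t)(n)\neq n$. Finally, since the computation of $w'(t)(n)$ touches only finitely many points, one replaces $t$ by an element $y\in G_{k'}\leq G_{\mathfrak n}$ agreeing with $t$ on those points. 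Your approach essentially reproves the content of Hull--Osin's Lemma~5.6 by hand inside $G_{k+m}$; this is self-contained but heavier on bookkeeping (in particular you must ensure, at each step where $g_i^{-\varepsilon}$ follows $h_j g_i^{\varepsilon}$, that the intermediate point is chosen off the fixed set of $h_j$ in its block --- possible since $h_j$ is nontrivial on every block, but not addressed in your sketch).
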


\begin{proof} Toward a contradiction, assume $G_{\mathfrak{n}}$ is  not  MIF, i.e. there exist 
$$x_1, \dots, x_m\in G_{\mathfrak{n}}$$ and a word $$w=w(v_1, \dots, v_\ell, x_1, \dots, x_m)\in \mathbb{F}(v_1, \dots, v_{\ell})*\langle x_1,\dots, x_\ell\rangle,$$ where $\mathbb{F}(v_1,\dots, v_\ell)$ is the free group generated by letters $v_1,\dots, v_\ell$, such that 
$$ w(y_1, \dots, y_\ell, x_1, \dots, x_m)=e $$
for all $y_1, \dots, y_\ell\in G_{\mathfrak{n}}$. In particular, for the word 
$$w'=w'(v, x_1,\dots, x_m)=w(v, \dots, v, x_1,\dots, x_m)\in \mathbb{F}(v)*\langle x_1, \dots, x_\ell\rangle, $$
where $v$ is a new letter, we have
$$ w'(y, x_1,\dots, x_m)=e $$
for all $y\in G_{\mathfrak{n}}$. Let
$$ w'=v^{\alpha_{p+1}}z_pv^{\alpha_p}\cdots  v^{\alpha_2}z_1v^{\alpha_1} $$
where for each $i=1,\dots, p, p+1$, $\alpha_i\in \mathbb{N}$, and for each $i=1,\dots, p$, $z_i\in \langle x_1,\dots, x_m\rangle$, and all of $z_1,\dots, z_p, v^{\alpha_1},\dots, v^{\alpha_p}, v^{\alpha_{p+1}}$ are non-identity elements except $v^{\alpha_1}$ or $v^{\alpha_{p+1}}$. Note that each of $z_1,\dots, z_p$ has infinite support. By \cite[Lemma 5.6]{HO}, there is a permutation $t\in \mbox{Sym}(\mathbb{N})$ and $n\in \mathbb{N}$ such that $w'(t, x_1,\dots, x_m)(n)\neq n$. Since the computation of $w'(t, x_1, \dots, x_m)(n)$ only uses finitely many natural numbers, we conclude that there is $y\in G_{\mathfrak{n}}$ such that $w'(y, x_1,\dots, x_m)(n)\neq n$, and thus $w'(y, x_1, \dots, x_m)\neq e$, a contradiction.
\end{proof}

Etedadialiabadi showed that $G_{\mathfrak{n}}$ is MIF with a slightly different proof.

In Section~\ref{first:density:section} we will show that the Hall's universal locally finite group $\mathbb{H}$, defined by Hall \cite{Hall}, can be given a group topology with the L\'evy property. Below we show that $\mathbb{H}$ is not isomorphic to $G_{\mathfrak{n}}$ for any infinite supernatural number $\mathfrak{n}$. 

We recall the definition and properties of $\mathbb{H}$; for more details, see \cite{Hall} and, e.g., \cite{Gao-Adv} or \cite{Eted-Gao-Feng}. Recall that $\mathbb{H}$ is the countable locally finite group $H$, unique up to isomorphism, which satisfies any of the following equivalent conditions:
\begin{itemize}
\item[(i)] $H$ is universal for finite groups, and $H$ is ultrahomogeneous (i.e., for any isomorphic finite subgroups $F_1, F_2$ of $H$, there is an automorphism $\varphi$ of $H$, such that $\varphi(F_1)=F_2$);
\item[(ii)] $H$ is universal for finite groups, and for any finite subgroups $F_1, F_2$ of $H$ and isomorphism $\theta\colon F_1\to F_2$, there is  $g\in H$ such that for all $x\in F_1$, $\theta(x)=g^{-1}xg$;
\item[(iii)] for any finite groups $F_1, F_2$ and group embeddings $\varphi\colon F_1\to H$ and $i\colon F_1\to F_2$, there is a group embedding $\Phi\colon F_2\to H$ such that $\varphi=\Phi\circ i$.
\end{itemize}
Hall \cite{Hall} showed the existence and the simplicity of $\mathbb{H}$. It is well known that $\mathbb{H}$ is isomorphic to the Fra\"iss\'e limit of the Fra\"iss\'e class of all finite groups (see, e.g.,  \cite{Gao-Adv}; also see \cite{Gao-Adv} for a brief review of the Fra\"iss\'e theory). It was shown by Etedadialiabadi, Gao, Le Ma\^itre, and Melleray \cite[Corollary 6.14]{Gao-Adv} that $\mathbb{H}$ is  MIF. 

\begin{proposition} For any scale $(n_k)$ with $\mathfrak{n}=\lcm(n_k)$, $G_{\mathfrak{n}}$ is not isomorphic to $\mathbb{H}$. 
\end{proposition}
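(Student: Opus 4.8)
The plan is to find a purely group-theoretic invariant separating $G_{\mathfrak{n}}$ from $\mathbb{H}$. The property just established for $G_{\mathfrak{n}}$ (being MIF) is of no help, since $\mathbb{H}$ is also MIF; instead I will use the strong conjugacy homogeneity of $\mathbb{H}$ recorded in condition (ii) of its characterization: any two isomorphic finite subgroups of $\mathbb{H}$ are conjugate in $\mathbb{H}$. Applying this to $F_1=\langle x\rangle$, $F_2=\langle y\rangle$ and the isomorphism $x\mapsto y$ for elements $x,y$ of order $2$, we see that \emph{all involutions of $\mathbb{H}$ are conjugate to one another}. Hence it suffices to produce two non-conjugate elements of order $2$ in $G_{\mathfrak{n}}$.

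To do this I would work with the explicit presentation $G_{\mathfrak{n}}=\varinjlim(\mathrm{Sym}(X_{n_k}),\varphi_k)$ from Example~\ref{ex:5}. Writing $\varphi_{k,m}=\varphi_{m-1}\circ\cdots\circ\varphi_k\colon \mathrm{Sym}(X_{n_k})\to\mathrm{Sym}(X_{n_m})$ for $k\le m$, one checks by induction that $\varphi_{k,m}(\hat g)$ is the permutation of $X_{n_m}$ obtained by replicating $\hat g$ on each of the $n_m/n_k$ consecutive blocks of length $n_k$; in particular $|\mathrm{supp}(\varphi_{k,m}(\hat g))|=(n_m/n_k)\,|\mathrm{supp}(\hat g)|$. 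Consequently, for $g\in G_{\mathfrak{n}}$ represented by $\hat g\in\mathrm{Sym}(X_{n_k})$, the rational number $s(g):=|\mathrm{supp}(\hat g)|/n_k$ is independent of the stage $k$ at which a representative is chosen. The decisive point is that $s$ is conjugation-invariant: if $g,h,f\in G_{\mathfrak{n}}$ satisfy $h=fgf^{-1}$, choose a stage $m$ at which all three are represented, say by $\hat g,\hat h,\hat f\in\mathrm{Sym}(X_{n_m})$ with $\hat h=\hat f\hat g\hat f^{-1}$; then $\hat g$ and $\hat h$, being conjugate in the finite group $\mathrm{Sym}(X_{n_m})$, have equal cycle types and hence equal support sizes, so $s(g)=s(h)$.

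To conclude, since $\mathfrak{n}$ is an infinite supernatural number we have $n_k\to\infty$, so there exist indices $k<l$ with $2\le n_k<n_l$. Let $g$ be the image in $G_{\mathfrak{n}}$ of a transposition of $X_{n_k}$ and let $h$ be the image of a transposition of $X_{n_l}$; both have order $2$ (a transposition remains a nonidentity permutation under every $\varphi_{k,m}$ and squares to the identity). But $s(g)=2/n_k\ne 2/n_l=s(h)$, so $g$ and $h$ are not conjugate in $G_{\mathfrak{n}}$, whereas all involutions of $\mathbb{H}$ are conjugate. Therefore no group isomorphism $G_{\mathfrak{n}}\to\mathbb{H}$ can exist, which is the assertion (and a fortiori they are not topologically isomorphic).

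I expect the only real decision point to be the choice of invariant. Simplicity of $\mathbb{H}$ would suffice whenever $G_{\mathfrak{n}}$ is itself non-simple—for instance, when the $2$-adic valuations $v_2(n_k)$ eventually stabilize, the subgroup $\varinjlim \mathrm{Alt}(X_{n_k})$ of even permutations is a proper normal subgroup of $G_{\mathfrak{n}}$—but this does not obviously cover every scale (e.g.\ when $v_2(\mathfrak{n})=\infty$). The support-density invariant $s$ above handles all scales uniformly, and once it is in place the remaining verifications are routine.
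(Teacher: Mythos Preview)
Your proof is correct and follows the same high-level strategy as the paper: exploit the conjugacy-homogeneity property (ii) of $\mathbb{H}$ by exhibiting two isomorphic finite (cyclic) subgroups of $G_{\mathfrak{n}}$ that are not conjugate there. The difference is only in the conjugacy invariant chosen. The paper views $G_{\mathfrak{n}}$ as a permutation group of $\mathbb{N}$ and contrasts the image of a full $n_k$-cycle on $X_{n_k}$ (acting without fixed points on $\mathbb{N}$) with the image of an $n_k$-cycle supported on a single block inside $X_{n_{k+1}}$ (infinitely many fixed points); since conjugation in a permutation group preserves fixed-point structure, these cannot be conjugate. Your support-density $s(g)=|\mathrm{supp}(\hat g)|/n_k$ is essentially the complementary quantity, but packaged as a $[0,1]$-valued conjugacy invariant on all of $G_{\mathfrak{n}}$; this lets you use involutions (transpositions at two different stages) rather than $n_k$-cycles. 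Both arguments cover every infinite scale; yours is a bit more systematic in that $s$ separates many conjugacy classes at once, while the paper's is a direct one-off construction.
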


\begin{proof}
To see that $G_{\mathfrak{n}}$ is not isomorphic to $\mathbb{H}$, we show that there are isomorphic finite subgroups $F_1$ and $F_2$ of $G_{\mathfrak{n}}$ so that $F_1$ and $F_2$ are not conjugate within $G_{\mathfrak{n}}$, and therefore $G_{\mathfrak{n}}$ does not satisfy (ii) above. For this, consider an arbitrary $k$ where $n_k>3$. Let $g\in G_k=\mbox{Sym}(X_{n_k})$ be the cyclic permutation defined by $g(a)=a+1\!\!\mod\! n_k$ for $a\in X_{n_k}$. Let $x=\varphi_{k,\infty}(g)$ and $F_1=\langle x\rangle\leq G_{\mathfrak{n}}$. Then $|F_1|=n_k$ and $F_1$ is cyclic. Moreover, it is easy to see that, considering $F_1$ as a permutation group on $\mathbb{N}$,  no element of $F_1$ has fixed points.  Now let $h\in G_{k+1}=\mbox{Sym}(X_{n_{k+1}})$  be  the permutation defined by
$$ h(a)=\left\{\begin{array}{ll} a+1\!\!\!\mod\! n_k, & \mbox{ if $a\in X_{n_k}=\{0,\dots, n_k-1\}$,} \\ a, & \mbox{ if $a\in X_{n_{k+1}}\setminus X_{n_k}$.}\end{array}\right. $$
Let $y=\varphi_{k+1,\infty}(h)$ and $F_2=\langle y\rangle\in G_{\mathfrak{n}}$. Then it is easy to verify that $F_2$ is also cyclic and $|F_2|=n_k$. Thus $F_1$ and $F_2$ are isomorphic subgroups of $G_{\mathfrak{n}}$. Moreover, $y$ has infinitely many fixed points by the construction of $h$. Since $G_{\mathfrak{n}}$ is a permutation group of $\mathbb{N}$, if $F_1$ and $F_2$ were conjugate in $G_{\mathfrak{n}}$, i.e., there was $r\in G_{\mathfrak{n}}$ such that $r^{-1}xr=y$, then $y$ would have no fixed points as a permutation of $\mathbb{N}$, a contradiction.
\end{proof}

Before closing this section we note that not all $G_{\mathfrak{n}}$s are algebraically isomorphic to each other. In fact, if $\mathfrak{n}$ has $2^\infty$ as a factor, then one can show that $G_{\mathfrak{n}}$ is simple, whereas if $\mathfrak{n}$ does not have $2^\infty$ as a factor, then $G_{\mathfrak{n}}$ contains a simple subgroup of index $2$. Thus, for instance, $G_{2^\infty}$ and $G_{3^\infty}$ are not isomorphic algebraically.

\section{$\Delta$-Metric Approximations on Finite Powers} \label{section:martingale}

Starting from this section, we develop the main results of this paper. They are all about the notion of $\Delta$-metric space for a countable distance value set $\Delta$. We first recall the definitions.

\begin{definition}[{\cite[Definition 2.5]{Gao-Adv}}]\label{def:dvs} A \emph{distance value set} is a nonempty set $\Delta$ of positive real numbers satisfying that for all $x, y\in \Delta$, $\min\{x+y, \sup\Delta\}\in \Delta$. Given a distance value set $\Delta$, a \emph{$\Delta$-metric space} is a metric space whose nonzero distances belong to $\Delta$.
\end{definition}

We also recall the following basic facts about $\Delta$-metric spaces from \cite{Gao-Adv}. If $\Delta$ is a countable distance value set, then the class of all finite $\Delta$-metric spaces is a Fra\"iss\'e class. We denote its Fra\"iss\'e limit by $\mathbb{U}_\Delta$, and call it the \emph{Urysohn $\Delta$-metric space}. Then $\U_\Delta$ is a countable $\Delta$-metric space with the following properties:
\begin{itemize}
\item $\U_\Delta$ is \emph{universal for finite $\Delta$-metric spaces}, i.e., it contains an isometric copy of every finite $\Delta$-metric space;
\item $\U_\Delta$ is \emph{ultrahomogeneous}, i.e., for any isometric finite subspace $F_1, F_2$ of $\U_\Delta$, there is an isometry $\varphi$ of $\U_\Delta$ such that $\varphi(F_1)=F_2$.
\end{itemize}
In fact, $\U_\Delta$ is, up to isometry, the unique countable $\Delta$-metric space with both these properties. 

The following facts are from Etedadialiabadi, Gao and Li \cite{Eted-Gao-Feng}. Let $\mbox{Iso}(\U_\Delta)$ denote the group of all isometries of $\U_\Delta$ equipped with the pointwise convergence topology, i.e. a basic open neighborhood of the identity of $G$ is of the form 
\begin{equation}\label{eq:nbhd}
V[x_1, \dots, x_n; \epsilon]=\left\{g\in \mbox{Iso}(\U_\Delta)\colon d_{\U_\Delta}(x_i,g(x_i))<\epsilon \mbox{ for $i=1,\dots, n$}\right\} 
\end{equation}
where $f\in \mbox{Iso}(\U_\Delta)$, $\epsilon>0$, $x_1, \dots, x_n\in \U_\Delta$, and $d_{\U_\Delta}$ is the metric on $\U_\Delta$. Then $\mbox{Iso}(\U_\Delta)$ is a separable metrizable group. It is Polish if and only if $\inf\Delta>0$, and it is extremely amenable if and only if $\inf \Delta=0$.

One of our main results in this paper is that $\mbox{Iso}(\U_\Delta)$ has the strong L\'evy property when $\inf\Delta=0$. We give two proofs for this fact. However, in both proofs, we have to deal with the following issue if we try to  mimic  Pestov's proof of the strong L\'evy property for $\mbox{Iso}(\U)$. In a general step of the construction of the witnessing sequence, Pestov considers the diagonal extension of a finite metric space $(X, d)$ to a finite power $X^m$ with the \emph{normalized Hamming metric}
$$ d'(x, y)=\displaystyle\frac{1}{m}\sum_{i=1}^m d(x_i, y_i). $$
In the $\Delta$-metric context this is no longer legitimate (since the values do not necessarily belong to $\Delta$ anymore) and we have to find $\Delta$-metric spaces that approximate the extension. The rest of this section is devoted to developing some tools to deal with this issue.

\subsection{$\Delta$-remetrization of finite metric spaces} \label{subsection:delta:triangles}

In this section we develop a tool to accomplish the following. Given a finite metric space $(X, d)$ and a countable distance value set $\Delta$ with $\inf\Delta=0$, we would like to reassign the distance values of $(X,d)$ by values from $\Delta$ so that the resulting $\Delta$-metric space $(X,d')$ has the same isometries as $\mbox{Iso}(X,d)$. Moreover, we  will maintain the following properties:
\begin{itemize}
\item if $d(x, y)\in \Delta$, then $d'(x,y)=d(x,y)$; and
\item for all $x, y\in X$, $|d'(x,y)-d(x,y)|<\epsilon$ for some predetermined $\epsilon>0$.
\end{itemize}
In the following, we are able to do this in a stronger way so that
\begin{itemize}
\item if $d(x,y)=d(x',y')$, then $d'(x,y)=d'(x', y')$.
\end{itemize}
That is, we are going to make the $\Delta$-remetrization by only working with the distance values of $(X, d)$ rather than with the $(X, d)$ itself.

\begin{definition}\label{def:remet} Let $\epsilon>0$ and let $\Delta$ be a distance value set. For a metric space $(X, d)$, a \emph{$(\Delta, \epsilon)$-remetrization} of $(X, d)$ is a $\Delta$-metric $d'$ on $X$ such that
\begin{enumerate}
\item[(a)] for all $x, y\in X$, if $d(x,y)\in \Delta$, then $d'(x,y)=d(x,y)$;
\item[(b)] for all $x, y\in X$, $|d'(x,y)-d(x,y)|<\epsilon$;
\item[(c)] for all $x,x',y,y'\in X$, if $d(x,y)=d(x',y')$, then $d'(x,y)=d'(x',y')$;
\item[(d)] $\Iso(X, d)=\Iso(X, d')$.
\end{enumerate}
\end{definition}

Our main theorem of this subsection is the following.

\begin{theorem} \label{this:delta:thm}
Let $\epsilon>0$ and let $\Delta$ be a distance value set with $\inf\Delta=0$. Then for any finite metric space $(X, d)$ with $\diam(X, d)\leq \sup\Delta$, there is a $(\Delta, \epsilon)$-remetrization $d'$ on $X$.
\end{theorem}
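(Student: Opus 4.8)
The plan is to keep $d$ unchanged on pairs whose distance already lies in $\Delta$, and to push the remaining distance values \emph{upward} by small amounts chosen to be non-increasing in the size of the distance, so that the triangle inequalities survive. Since $X$ is finite, let $r_1<r_2<\dots<r_k$ be the positive distance values of $(X,d)$, and put $P=\{j:r_j\in\Delta\}$, $Q=\{j:r_j\notin\Delta\}$. I will choose $s_j\in\Delta$ with $s_j=r_j$ for $j\in P$ and $s_j=r_j+\delta_j$ with $0<\delta_j<\epsilon$ for $j\in Q$, and set $d'(x,x)=0$ and $d'(x,y)=s_j$ whenever $d(x,y)=r_j$. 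Then (a) and (c) hold by construction and (b) holds since $|s_j-r_j|<\epsilon$; if in addition $s_1<s_2<\dots<s_k$, then the value map $r_j\mapsto s_j$ is injective, and since $d'$ is a function of $d$ we get $\Iso(X,d)\subseteq\Iso(X,d')$ directly and $\Iso(X,d')\subseteq\Iso(X,d)$ by injectivity, which is (d). The only real task is to choose the $\delta_j$ so that $d'$ is a metric, i.e.\ $s_a\le s_b+s_c$ for every triangle $x,y,z$ of $X$ (with $x,y,z$ pairwise distinct) with sides $r_a=d(x,z)$, $r_b=d(x,y)$, $r_c=d(y,z)$, in each of the three roles of the vertices.

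Two structural facts make this possible. First, because $\inf\Delta=0$, the set $\Delta$ is dense in $(0,\sup\Delta)$: for small $t\in\Delta$ all multiples $mt\le\sup\Delta$ lie in $\Delta$ by iterating the defining closure property, so $\Delta$ meets every interval $[x,x+\eta)$ with $0<x<\sup\Delta$ and $\eta>0$. Second, if a triangle of $X$ satisfies $r_a=r_b+r_c$ with $r_b,r_c\in\Delta$, then $r_a=\min\{r_b+r_c,\sup\Delta\}\in\Delta$ because $r_a\le\diam(X,d)\le\sup\Delta$; hence whenever $r_a=r_b+r_c$ with $a\in Q$, at least one of $b,c$ lies in $Q$. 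Set $\gamma=\min\{\,r_b+r_c-r_a : (r_a,r_b,r_c)\text{ a triangle of }X,\ r_a<r_b+r_c\,\}>0$ (the uniform slack of the strict triangle inequalities) and $\mu=\min_i(r_{i+1}-r_i)>0$ (the minimal gap between consecutive distance values); these are finite positive quantities because $X$ is finite.

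Now fix $c>0$ smaller than each of $\epsilon$, $\gamma$, $\mu$, and---when $\sup\Delta<\infty$---than $\sup\Delta-r_j$ for every $j\in Q$. List $Q$ as $j_1,\dots,j_p$ with $r_{j_1}<\dots<r_{j_p}$ and pick the offsets from the largest distance value down: choose $\delta_{j_p}\in(0,c)$ with $r_{j_p}+\delta_{j_p}\in\Delta$, and, given $\delta_{j_{l+1}}$, choose $\delta_{j_l}\in[\delta_{j_{l+1}},c)$ with $r_{j_l}+\delta_{j_l}\in\Delta$. Density supplies such a value at each step (the point $r_{j_l}+\delta_{j_{l+1}}$ lies in $(0,\sup\Delta)$ thanks to the bound $c<\sup\Delta-r_{j_l}$), and it exceeds $r_{j_l}$ because $r_{j_l}\notin\Delta$; so all $\delta_j\in(0,c)$, each $s_j\in\Delta$, and $\delta$ is a non-increasing function of the distance value on $Q$. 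It remains to check $s_a\le s_b+s_c$ for a triangle with $r_a\le r_b+r_c$ (the three roles being treated symmetrically). If $a\in P$ then $s_a=r_a\le r_b+r_c\le s_b+s_c$ since the offsets are non-negative. If $a\in Q$ and $r_a<r_b+r_c$, then $s_a=r_a+\delta_a<r_a+\gamma\le r_b+r_c\le s_b+s_c$. If $a\in Q$ and $r_a=r_b+r_c$, then one of $b,c$, say $b$, lies in $Q$ and $r_b<r_a$, so $\delta_a\le\delta_b$ by the non-increasing property and $s_a=r_a+\delta_a\le(r_b+\delta_b)+r_c\le s_b+s_c$. Finally $s_1<\dots<s_k$ because $s_{i+1}-s_i\ge(r_{i+1}-r_i)-\delta_i>\mu-c>0$. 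So $d'$ is a $\Delta$-metric with $\Iso(X,d)=\Iso(X,d')$ and properties (a)--(d), completing the proof.

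The step I expect to be the main obstacle is the equality case $r_a=r_b+r_c$ of the triangle inequality: any naive independent rounding of the distance values accumulates an error of order the grid spacing and destroys the tight triangles. The two observations above are exactly what breaks this deadlock---a tight triangle forces $r_a$ to strictly dominate $r_b$ and $r_c$, and a tight triangle with $r_b,r_c\in\Delta$ automatically has $r_a\in\Delta$---which lets the perturbations be chosen non-increasing in the distance value so that $\delta_a\le\delta_b+\delta_c$ comes for free. The only remaining difficulty is the routine bookkeeping of the single smallness constant $c$ against $\epsilon$, the slack $\gamma$, the gap $\mu$, and the distance to $\sup\Delta$.
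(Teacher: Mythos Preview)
Your proof is correct and follows essentially the same strategy as the paper's: push each non-$\Delta$ distance value upward into $\Delta$ by an offset that is non-increasing in the size of the distance, so that tight triangles survive via the closure property of $\Delta$, and strict triangles survive because the offsets are bounded by the uniform slack. The paper packages this as a separate lemma with offsets placed in geometrically shrinking windows $(\epsilon'/2^{i+1},\epsilon'/2^{i})$, while you choose them recursively from the top down; the mechanism is identical, and the only cosmetic gap is that your $\gamma$ is a minimum over a possibly empty set when every triangle of $X$ is degenerate, in which case that branch of the verification is vacuous anyway.
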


The rest of this subsection is devoted to a proof of Theorem~\ref{this:delta:thm}. We use the following notion of $S$-triangle in the proof.

\begin{definition}[{\cite[Definition 7.5]{Gao-Adv}}] \label{delta:triang:def}
Let $S$ be a subset of $\R^+$. We say that a triple $(s_1,s_2,s_3) \in S^3$ is an \emph{$S$-triangle} whenever the inequalities $|s_1-s_2| \leq s_3 \leq s_1+s_2$ hold.

\end{definition}

\begin{remark} \label{this:permutation:remark}
It is easily seen that a triple $(s_1,s_2,s_3) \in S^3$ is an $S$-triangle if and only if $(s_{\sigma(1)},s_{\sigma(2)},s_{\sigma(3)})$ is an $S$-triangle for any permutation $\sigma$ on $\{1,2,3\}$. 
\end{remark}

The following is our main technical lemma. 

\begin{lemma} \label{this:triangular:lemma}
Let $S$ and $S'$ be two finite subsets of $\mathbb{R}^+$ with the property:
\begin{quote}
if $s, t \in S \cap S'$ and $s + t \in S$ hold, then $s + t \in S \cap S'$. 
\end{quote}
Define the set \begin{equation} \label{set:gamma:fors}
\Gamma_S = \left\{r >0\colon \mbox{ for some $S$-triangle $(s_1,s_2,s_3)$, $r= s_1 + s_2 - s_3$} \right\},
\end{equation}
and let $0 < \gamma = \min \Gamma_S$. Let $t_i$, $1\leq i \leq m$, be an increasing enumeration of the elements of $S \setminus S'$. Assume there exist $0<\epsilon <\gamma/2$ and a bijection $\Phi: S \to S'$ with the following properties:

  \begin{itemize}
\item[(i)] $\Phi$ fixes the intersection $S \cap S'$ pointwise; 
\item[(ii)] for any $1\leq i \leq m$, we have 
$$\displaystyle\frac{\epsilon}{2^{i+1}} < \Phi(t_i) - t_i < \frac{\epsilon}{2^i}.$$  
\end{itemize}
Then, $(\Phi(s_1),\Phi(s_2),\Phi(s_3))$ is an $S'$-triangle whenever $(s_1,s_2,s_3)$ is an $S$-triangle. 
\end{lemma}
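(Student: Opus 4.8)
The plan is to verify that $(\Phi(s_1),\Phi(s_2),\Phi(s_3))$ satisfies all three triangle inequalities. By Remark~\ref{this:permutation:remark}, both the hypothesis ``$(s_1,s_2,s_3)$ is an $S$-triangle'' and the conclusion ``$(\Phi(s_1),\Phi(s_2),\Phi(s_3))$ is an $S'$-triangle'' are invariant under permuting the three coordinates, so it suffices to prove a single inequality $\Phi(s_k)\le\Phi(s_i)+\Phi(s_j)$ for a fixed but arbitrary labelling $\{i,j,k\}=\{1,2,3\}$, assuming $(s_i,s_j,s_k)$ is an $S$-triangle. First I would record the displacement estimates coming from (i) and (ii): setting $\delta(s)=\Phi(s)-s$, one has $\delta(s)=0$ precisely when $s\in S\cap S'$, while $\epsilon/2^{i+1}<\delta(t_i)<\epsilon/2^{i}$ for $t_i\in S\setminus S'$; in particular $0\le\delta(s)<\epsilon/2$ for every $s\in S$. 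So $\Phi$ never decreases a value, and raises it by strictly less than $\epsilon/2$. Since $(s_i,s_j,s_k)$ is an $S$-triangle, $s_i+s_j-s_k\ge0$, and I would split into the strict and the degenerate subcase.

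In the strict subcase $s_i+s_j-s_k>0$, this number belongs to $\Gamma_S$, hence $s_i+s_j-s_k\ge\gamma$. Combining $\Phi(s_i)+\Phi(s_j)\ge s_i+s_j$ with $\Phi(s_k)<s_k+\epsilon/2$ and $\epsilon<\gamma/2$ gives
\[
\Phi(s_i)+\Phi(s_j)-\Phi(s_k)\;>\;(s_i+s_j-s_k)-\tfrac{\epsilon}{2}\;\ge\;\gamma-\tfrac{\epsilon}{2}\;>\;0,
\]
so the inequality is preserved with room to spare. This subcase is routine --- the gap $\gamma$ absorbs the total perturbation.

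The degenerate subcase $s_k=s_i+s_j$ is where the real work lies, and it is precisely what forces hypothesis (ii) to prescribe geometrically shrinking gaps: the slack is now exactly $0$, so the $\gamma$-estimate is useless, and one must use the closure property relating $S$ and $S'$. If $s_i,s_j\in S\cap S'$, then $s_i+s_j=s_k\in S$ forces $s_k\in S\cap S'$ by the displayed hypothesis, so $\Phi$ fixes all three points and $\Phi(s_k)=\Phi(s_i)+\Phi(s_j)$. Otherwise one of $s_i,s_j$, say $s_i=t_\ell$, lies in $S\setminus S'$ (the other case being symmetric); then $s_k=t_\ell+s_j>t_\ell$, so if $s_k=t_{\ell'}\in S\setminus S'$ then $\ell'>\ell$ by the increasing enumeration, whence $\delta(s_k)<\epsilon/2^{\ell'}\le\epsilon/2^{\ell+1}<\delta(t_\ell)=\delta(s_i)$; and if $s_k\in S\cap S'$ then $\delta(s_k)=0<\delta(s_i)$. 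Either way $\delta(s_k)<\delta(s_i)\le\delta(s_i)+\delta(s_j)$, hence $\Phi(s_k)=s_i+s_j+\delta(s_k)<\Phi(s_i)+\Phi(s_j)$. Running this over the three choices of $k$ completes the proof.

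So the only genuine obstacle is the flat triangle $s_k=s_i+s_j$: a perturbation --- however small --- could in principle break the inequality if the two summands were pushed up less than the ``hypotenuse''. The construction of $\Phi$ forestalls this in two ways: it pins down $S\cap S'$ pointwise so that the closure hypothesis handles the case of two unperturbed summands, and it spaces the perturbations on $S\setminus S'$ along a geometric sequence so that the displacement of any $t_{\ell'}$ is strictly dominated by that of any smaller $t_\ell$ --- which is exactly what one needs, since $s_k=s_i+s_j$ always exceeds whichever of its summands lies in $S\setminus S'$.
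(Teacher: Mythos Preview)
Your proof is correct and follows the same essential strategy as the paper's: split into the strict case $s_i+s_j>s_k$ (handled by the $\gamma$-gap absorbing the total perturbation) and the degenerate case $s_k=s_i+s_j$ (handled by the closure hypothesis when both summands lie in $S\cap S'$, and by the geometrically decreasing displacements $\delta(t_{\ell'})<\delta(t_\ell)$ for $\ell'>\ell$ otherwise). The only difference is cosmetic---the paper frames the argument by contradiction while you argue directly---and your presentation is if anything slightly cleaner.
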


\begin{proof} First note that our assumptions imply that for any $s\in S$, $\Phi(s)\geq s>0$. 
Now assume that $(\Phi(s_1),\Phi(s_2),\Phi(s_3))$ fails to be an $S'$-triangle for some triple $(s_1,s_2,s_3)\in S^3$. This implies that $\Phi(s_3) > \Phi(s_1) + \Phi(s_2)$ or $\Phi(s_3) < |\Phi(s_1) - \Phi(s_2)|$. %
\begin{claim} \label{this:claim:1.3:1}
If $\Phi(s_3) > \Phi(s_1) + \Phi(s_2)$ holds, then $(s_1,s_2,s_3)$ is not an $S$-triangle.
\end{claim}
\begin{proof}[Proof of Claim~\ref{this:claim:1.3:1}]
We note that the assertion is true whenever the inequality $s_3 >  \Phi(s_1) + \Phi(s_2)$ holds. Indeed, in this case we have $s_3 >\Phi(s_1)+\Phi(s_2)\geq s_1+s_2$, and thus $(s_1,s_2,s_3)$ is not an $S$-triangle. We may therefore assume that \begin{equation}  \label{eq:7}
\Phi(s_3) > \Phi(s_1) + \Phi(s_2) \geq s_3.
\end{equation} 
By (i), this implies that $s_3 \in S \setminus S'$. It also follows from (\ref{eq:7}) that $s_3\neq s_1$ and $s_3\neq s_2$.
Toward a contradiction, assume $(s_1, s_2, s_3)$ is an $S$-triangle, and in particular $s_3\leq s_1+s_2$. We consider two cases. 

Case 1: $s_3 = s_1 + s_2$. We first argue that for at least one of $i \in\{1,2\}$, $s_i \in S \setminus S'$. Otherwise,  $s_1, s_2 \in S \cap S'$ and $s_1 + s_2 = s_3 \in S$ both hold, and by our assumption of $S$ and $S'$, we must have $s_3 \in S \cap S'$ and so $\Phi(s_3) = s_3$ by (i), contradicting \eqref{eq:7}. Without loss of generality, we may assume $d_1 \in S \setminus S'$.

We now have $s_3,s_1 \in S \setminus S'$ with $s_3 >s_1$. Thus there are $1\leq i< j\leq m$ such that $s_1 = t_i$ and $s_3 = t_j$.  By (ii), we have \begin{equation*}
\Phi(s_3) - s_3 < \frac{\epsilon}{2^j} \leq \frac{\epsilon}{2^{i+1}} < \Phi(s_1) - s_1.
\end{equation*}
Now we have
$$\begin{array}{rcl} s_1+s_2-s_3&\leq& s_1+\Phi(s_2)-s_3 \\
&=&s_1-\Phi(s_1)+\Phi(s_1)+\Phi(s_2)-s_3 \\
&<&s_1-\Phi(s_1)+\Phi(s_3)-s_3<0, 
\end{array}$$
contradicting our case assumption.

Case 2: $s_3 < s_1 + s_2$. Let $1\leq i\leq m$ be such that $s_3=t_i$. Then we have
$$ \gamma\leq s_1+s_2-s_3\leq \Phi(s_1)+\Phi(s_2)-s_3<\Phi(s_3)-s_3<\displaystyle\frac{\epsilon}{2^i}<\frac{\gamma}{2^{i+1}}<\gamma, $$
again a contradiction. 

These contradictions imply that $(s_1,s_2,s_3)$ is not an $S$-triangle.
\end{proof}

\begin{claim} \label{this:claim:1.3:2}
If $\Phi(s_3) < |\Phi(s_1) - \Phi(s_2)|$ then $(s_1,s_2,s_3)$ is not an $S$-triangle.
\end{claim}
\begin{proof}[Proof of Claim~\ref{this:claim:1.3:2}]
Let $\sigma$ be a permutation of $\{1,2,3\}$ for which $d_3 = d_{\sigma(1)}$, and \begin{equation*}
\Phi(d_{\sigma(1)}) = \Phi(d_3) <|\Phi(d_1) - \Phi(d_2)| = \Phi(d_{\sigma(3)}) - \Phi(d_{\sigma(2)}).
\end{equation*}
This implies that \begin{equation} \label{this:eq10:lem14}
\Phi(d_{\sigma(3)}) > \Phi(d_{\sigma(1)}) + \Phi(d_{\sigma(2)}).
\end{equation}
Note that \eqref{this:eq10:lem14} is precisely the case of Claim \ref{this:claim:1.3:1}, implying that $(d_{\sigma(1)},d_{\sigma(2)},d_{\sigma(3)})$ is not an $S$-triangle. By Remark \ref{this:permutation:remark}, the triple $(d_1,d_2,d_3)$ is not an $S$-triangle either, proving our claim. %
\end{proof}
The lemma now follows from Claims \ref{this:claim:1.3:1} and \ref{this:claim:1.3:2}.
\end{proof}

We are now ready to prove Theorem~\ref{this:delta:thm}.
\begin{proof}[Proof of Theorem \ref{this:delta:thm}] Let $\epsilon>0$ and let $\Delta$ be a distance value set with $\inf \Delta=0$. Let $(X, d)$ be a finite metric space satisfying $\diam (X, d)\leq\sup \Delta$.

Let $S$ be the set of all nonzero distance vales of $(X, d)$. The theorem is trivial if $S\subseteq \Delta$. We assume $S\setminus \Delta\neq \varnothing$. Let $t_i$, $1\leq i \leq m$, be an increasing enumeration of the elements of $S \setminus \Delta$. Define $\Gamma_S$ as in \eqref{set:gamma:fors} and let $\gamma = \min \Gamma_S$. Let
$$ \delta=\min\{|s-t|\colon s, t\in S\mbox{ and } s\neq t\}. $$ 
Then $\gamma,\delta>0$. If $\sup\Delta$ is finite, then by Definition~\ref{def:dvs}, $\sup\Delta\in \Delta$; in this case $t_m<\sup\Delta$. In this case, we also set $\beta=\sup\Delta-t_m$. Let $\beta=1/2$ if $\sup\Delta=+\infty$. Finally let
$$ \epsilon' = \displaystyle\frac{1}{2}\min\left\{\gamma, \delta, \beta, \epsilon\right\}.$$ 
Since $\inf\Delta=0$, $\Delta$ is dense in the interval $(0, \sup\Delta)$. Hence for each $1\leq i\leq m$, we can find $t'_i\in \Delta$ such that 
$$ \displaystyle\frac{\epsilon'}{2^{i+1}}<t'_i-t_i<\frac{\epsilon'}{2^i}. $$
Since $\epsilon'<\delta$, we have that $t'_i\not\in S$ for all $1\leq i\leq m$.

Let $S'=(S\cap \Delta)\cup\{t_i'\colon 1\leq i\leq m\}$ and define $\Phi\colon S\to S'$ by letting $\Phi(s)=s$ if $s\in S\cap \Delta$ and $\Phi(t_i)=t_i'$ for $1\leq i\leq m$. Define a distance function $d'$ on $X$ by letting, for any $x, y\in X$,
$$ d'(x,y)=\left\{\begin{array}{ll} d(x,y), & \mbox{ if $d(x,y)\in \Delta$,} \\ \Phi(d(x,y)), & \mbox{ otherwise.} \end{array}\right. $$
Since $\Delta$ is a distance value set, $S$ and $S'$ satisfy the hypothesis of Lemma~\ref{this:triangular:lemma}. It is also clear that $\Phi$ satisfies the hypothesis of Lemma~\ref{this:triangular:lemma}. It now follows from Lemma~\ref{this:triangular:lemma} that $d'$ is a metric on $X$. Now conditions (a)--(c) of Definition~\ref{def:remet} are obvious with this definition of $d'$.

We only verify (d). For this suppose $\varphi\in\Iso(X,d)$ first. Then for any $x, y\in X$, $d(\varphi(x), \varphi(y))=d(x,y)$. Then by (c), $d'(\varphi(x), \varphi(y))=d'(x,y)$. Thus $\varphi\in \Iso(X,d')$. Conversely, suppose $\varphi\in\Iso(X,d')$ and $x, y\in X$. Then $d'(\varphi(x), \varphi(y))=d'(x,y)$. If $d'(x,y)=d(x,y)$, then $d(x,y)\in \Delta$ and 
$$d(\varphi(x), \varphi(y))=d'(\varphi(x), \varphi(y))=d'(x,y)=d(x,y). $$
If $d'(x,y)\neq d(x,y)$, then $d(x,y)\not\in \Delta$ and we must have $d'(x,y)=\Phi(d(x,y))$ and $d'(\varphi(x), \varphi(y))=\Phi(d(\varphi(x), \varphi(y))$. Since $\Phi$ is a bijection, we conclude again $d(\varphi(x), \varphi(y))=d(x,y)$. Thus $\varphi\in\Iso(X,d)$.
\end{proof}

\subsection{Concentration functions on finite powers} \label{subsection:martingale}
In this subsection we deal with finite powers of finite $\Delta$-metric spaces. In previous research, the finite powers are given the normalized Hamming metric, which is not necessarily a $\Delta$-metric. Here we remetrize the finite powers using Theorem~\ref{this:delta:thm}, but need to verify that the concentration of measure still takes place with the $\Delta$-metric approximations. The goal of this subsection is to give an estimate of the concentration function of the approximate $\Delta$-metric space which will then be used in the proofs in the rest of the paper. We do this by applying the martingale technique of \cite[Section 4.3]{Pestov2}, in particular \cite[Theorem 4.3.18]{Pestov2}, which boils down to a computation of the so-called \emph{length} of a metric-measure space.

We use the following notation. Given a set $X$ and its finite power $X^m$, for a given element $x \in X^m$ we shall write $x_i$, for $1\leq i \leq m$, to refer to its $i$-th coordinate. If $G$ is a subset of the group $\Sym(X)$ of all permutations of $X$, then for $g \in G^m$ and $x \in X^m$, by $g(x)$ we mean the element of $X^m$ where $g(x)_i=g_i(x_i)$ for all $1\leq i\leq m$, that is
$$\begin{array}{rcl}g(x) &= &(g(x)_1,g(x)_2,\dots,g(x)_{m-1},g(x)_m) \\
&= &(g_1(x_1),g_2(x_2),\dots,g_{m-1}(x_{m-1}),g_m(x_m)).
\end{array}$$
For each $i = 1,\dots,m$ and $x\in X^m$, define \begin{equation}
\Omega_i(x) = \left\{y \in X^m\colon x_j = y_j \text{ for all } j \leq i \right\}.
\end{equation}
Also let $\Omega_0(x)=X$. Then
$$ X=\Omega_0(x)\supseteq\Omega_1(x)\supseteq\Omega_2(x)\supseteq \cdots \supseteq \Omega_{m-1}(x)\supseteq \Omega_m(x)=\{x\}. $$
For any $1\leq i\leq m$, let
$$ \Omega_i=\{\Omega_i(x)\colon x\in X^m\}. $$
Then each $\Omega_i$ is a partition of $X^m$, $\Omega_0=\{X\}$, $\Omega_m=\{\{x\}\colon x\in X^m\}$ consists of all singletons in $X^m$, and each $\Omega_{i+1}$ is a refinement of $\Omega_i$ (we denote $\Omega_i\prec \Omega_{i+1}$). In summary, \begin{equation} \label{partition:chain}
\{X\} = \Omega_0 \prec \Omega_1 \prec \Omega_2 \prec \dots \prec \Omega_{m-1} \prec \Omega_{m} = \{ \{x\}: x \in X^m \}.
\end{equation}

Given a finite metric space $(X,d)$ and its finite power $X^m$ we denote by $d_m$ the normalized Hamming metric on $X^m$, that is, for $x, y\in X^m$, we have \begin{equation}
d_m(x,y) = \frac{1}{m} \sum_{j = 1}^m d(x_i,y_i). 
\end{equation}

Let $\Delta$ be a countable distance value set with $\inf\Delta=0$. Let $(X,d)$ be a finite $\Delta$-metric space. For each integer $m \in \N^+$, the product space $X^m$ equipped with $d_m$ satisfies that $\diam(X^m, d_m) \leq \sup \Delta$. Therefore, for each $\epsilon > 0$ we may apply Theorem \ref{this:delta:thm} to find a $(\Delta,\epsilon)$-remetrization $D=d'_m$ for $X^m$.

\begin{definition} \label{this:sup:metric:def}
Let $m \in \N^+$, and assume $(X^m,D)$ is a power of a finite metric space $X$ equipped with a metric $D$. Assume $G$ is a set of permutations of $X$, and $G^m$ is its power. We define the metric $\delta_D$ on $G^m$ as follows: \begin{equation} \label{this:eq:mar20:2024}
\delta_D(g,h) = \sup_{x \in X^m} D(g^{-1}(x),h^{-1}(x))
\end{equation}
for any $g, h\in G^m$.
\end{definition}

The slightly awkward inverses are used to make the metric $\delta_D$ right invariant when $G$ is closed under composition, i.e., for any $f, g, h\in G^m$, we have $\delta_D(gf, hf)=\delta_D(g, h)$. The right-invariance is not needed until Section~\ref{first:density:section}.

\begin{lemma} \label{this:lemma:length}
Let $\Delta$ be a countable distance value set with $\inf\Delta=0$, and let $\epsilon > 0$ and $m \in \N^+$ be arbitrary. Let $(X,d)$ be a finite $\Delta$-metric space, let $a=\diam(X,d)$, and let $G$ be a set of isometries of $(X,d)$. Let $D=d_m'$ be a $(\Delta, \epsilon/m)$-remetrization of $(X^m,d_m)$. Let $\{\Omega_i\colon 1\leq i \leq m\}$ be the collection of partitions of $G^m$ as in \eqref{partition:chain}. Fix $1\leq i\leq m$. Let $f \in G^m$ and $g,h \in \Omega_{i-1}(f)$ be arbitrary. Define a map $\phi: \Omega_{i}(g) \to \Omega_{i}(h)$ as follows: \begin{equation}
\phi(w)_j = \left\{\begin{array}{ll}w_j, & \mbox{ if $j < i$,} \\
h_i, & \mbox{ if $j = i$, }\\
w_j, & \mbox{ if $j>i$,}
\end{array}\right.
\end{equation}
for each $w \in \Omega_{i}(g)$. If $G^m$ is equipped with the metric $\delta_D$ as in \eqref{this:eq:mar20:2024} with respect to $D$, then the map $\phi$ is an isometry between the subspaces $\Omega_{i}(g)$ and $\Omega_{i}(h)$ of $(G^m, \delta_D)$. Moreover, for each $w \in \Omega_{i}(g)$, we have \begin{equation} \label{this:length:equation}
{\delta_D}(w,\phi(w)) \leq \frac{a + \epsilon}{m}.
\end{equation}
\end{lemma}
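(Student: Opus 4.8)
The plan is to verify the two assertions of the lemma separately: first that $\phi$ is a well-defined bijection which is an isometry of $(\Omega_i(g),\delta_D)$ onto $(\Omega_i(h),\delta_D)$, and second the quantitative estimate \eqref{this:length:equation}. Throughout, the key observation is that elements of $\Omega_{i-1}(f)$ agree in all coordinates $j<i$, so in particular $g_j=h_j$ for $j<i$, and $\phi$ simply overwrites coordinate $i$ by $h_i$ while leaving all other coordinates untouched; since $w\in\Omega_i(g)$ has $w_j=g_j=h_j$ for $j<i$ and $w_i=g_i$, the image $\phi(w)$ indeed lies in $\Omega_i(h)$, and the inverse map (overwriting coordinate $i$ by $g_i$) shows $\phi$ is a bijection.

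For the isometry claim, I would fix $w,w'\in\Omega_i(g)$ and compare $\delta_D(w,w')$ with $\delta_D(\phi(w),\phi(w'))$. Unravelling Definition~\ref{this:sup:metric:def}, $\delta_D(w,w')=\sup_{x\in X^m}D(w^{-1}(x),w'^{-1}(x))$, and since $D=d_m'$ is a $(\Delta,\epsilon/m)$-remetrization, property (c) of Definition~\ref{def:remet} forces $D$ to depend only on the underlying normalized Hamming metric $d_m$; so it suffices to track $d_m(w^{-1}(x),w'^{-1}(x))$ coordinatewise. In coordinate $j$ this contributes $d(w_j^{-1}(x_j),w_j'^{-1}(x_j))$. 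For $j\neq i$ we have $w_j=\phi(w)_j$ and $w_j'=\phi(w')_j$, so these contributions are unchanged; for $j=i$ we have $w_i=g_i=w_i'$ and $\phi(w)_i=h_i=\phi(w')_i$, so the $i$-th contribution is $d(g_i^{-1}(x_i),g_i^{-1}(x_i))=0$ on both sides. Hence $d_m(w^{-1}(x),w'^{-1}(x))=d_m(\phi(w)^{-1}(x),\phi(w')^{-1}(x))$ for every $x$, and applying the function from property (c) that converts $d_m$-values to $D$-values coordinate-pair by coordinate-pair gives $D(w^{-1}(x),w'^{-1}(x))=D(\phi(w)^{-1}(x),\phi(w')^{-1}(x))$; taking sup over $x$ yields $\delta_D(w,w')=\delta_D(\phi(w),\phi(w'))$.

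For \eqref{this:length:equation}, fix $w\in\Omega_i(g)$; then $w$ and $\phi(w)$ differ only in coordinate $i$, where $w_i=g_i$ and $\phi(w)_i=h_i$. For any $x\in X^m$, $d_m(w^{-1}(x),\phi(w)^{-1}(x))=\frac1m\, d\bigl(g_i^{-1}(x_i),h_i^{-1}(x_i)\bigr)\leq \frac{a}{m}$ since $d\leq a=\diam(X,d)$ and all the other coordinates cancel. Now I invoke property (b) of the remetrization: $|D(u,v)-d_m(u,v)|<\epsilon/m$ for all $u,v\in X^m$, so $D(w^{-1}(x),\phi(w)^{-1}(x))<d_m(w^{-1}(x),\phi(w)^{-1}(x))+\epsilon/m\leq (a+\epsilon)/m$. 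Taking the supremum over $x\in X^m$ gives $\delta_D(w,\phi(w))\leq (a+\epsilon)/m$, as required.

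The one step that needs genuine care — and which I expect to be the main obstacle in writing this cleanly — is the claim that $D$ is a function of the underlying $d_m$-distances in the strong sense needed: property (c) of Definition~\ref{def:remet} says $d_m(x,y)=d_m(x',y')$ implies $D(x,y)=D(x',y')$, which lets me define a well-defined function $\psi$ with $D=\psi\circ d_m$; I must then observe that the coordinatewise cancellation above actually produces the equality $d_m(w^{-1}(x),w'^{-1}(x))=d_m(\phi(w)^{-1}(x),\phi(w')^{-1}(x))$ of $d_m$-values (not just of the cancelled coordinates), so that $\psi$ applied to both sides gives equal $D$-values. One should double-check there is no subtlety in that the normalized Hamming metric of the $m$-tuples really does only see the multiset of coordinate distances and that permutation-invariance is not secretly needed; in fact only the value $d_m$ itself matters, and the argument goes through verbatim.
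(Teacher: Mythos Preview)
Your proposal is correct and follows essentially the same route as the paper's proof: verify that $\phi$ is a well-defined bijection using $g_j=h_j$ for $j<i$, show $d_m(w^{-1}(x),w'^{-1}(x))=d_m(\phi(w)^{-1}(x),\phi(w')^{-1}(x))$ coordinatewise (both sides reduce to the sum over $j>i$), invoke property~(c) of Definition~\ref{def:remet} to upgrade this to equality of $D$-values, and then use property~(b) together with $d_m(w^{-1}(x),\phi(w)^{-1}(x))\le a/m$ for the bound~\eqref{this:length:equation}. Your closing worry is unfounded: property~(c) says precisely that equal $d_m$-values give equal $D$-values, so the auxiliary function $\psi$ you introduce is exactly what is needed and no permutation-invariance is required.
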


\begin{proof}
Note that the map $\phi$ is well defined: indeed, for each $j < i$ we have $w_j = g_j= f_j = h_j$, and $w_i = h_i$ by construction. The map is also injective: given $v,w \in \Omega_i(g)$ we have that $v \neq w$ if and only if there exists $j > i$ satisfying $v_j \neq w_j$; in this case, $\phi(v)_j = v_j \neq w_j = \phi(w)_j$ holds, proving injectivity. Surjectivity is clear from the definition of $\phi$. %

By definition, for any $v, w\in \Omega_i(g)$ and $x\in X^m$,
$$\begin{array}{rcl} d_m(v^{-1}(x), w^{-1}(x))&=&\displaystyle\frac{1}{m}\sum_{j>i} d(v_i^{-1}(x_i),w_i^{-1}(x_i)) \\ \\
&=& d_m(\phi(v)^{-1}(x), \phi(w)^{-1}(x)). 
\end{array}$$
By Theorem~\ref{this:delta:thm} and Definition~\ref{def:remet} (c), we have
$$ D(v^{-1}(x), w^{-1}(x))=D(\phi(v)^{-1}(x), \phi(w)^{-1}(x)). $$
It follows that $\phi$ is an isometry between $\Omega_i(g)$ and $\Omega_i(h)$ as subspaces of $(G^m, \delta_D)$. 

Let us now verify \eqref{this:length:equation}.
Note that, for any $w \in \Omega_i(g)$ and $x \in X^m$, we have\begin{equation*}
\begin{array}{rcl} d_m\left( w^{-1}(x),\phi(w)^{-1}(x) \right) &=& %
 \displaystyle\frac{1}{m} d \left( w^{-1}(x)_i, \phi(w)^{-1}(x)_i \right) \\ \\
 &=& \displaystyle\frac{1}{m} d(g_i^{-1}(x_i), h_i^{-1}(x_i))\leq\displaystyle\frac{a}{m}.\end{array}
\end{equation*}
By Theorem~\ref{this:delta:thm} and Definition~\ref{def:remet} (b), we have
\begin{equation*}
\left| D \left(w^{-1}(x),\phi(w)^{-1}(x) \right) - d_m\left(w^{-1}(x),\phi(w)^{-1}(x) \right)\right| < \frac{\epsilon}{m}  .
\end{equation*}
Therefore, \begin{equation*}
\begin{array}{rcl}\delta_D(w, \phi(w))&=&\displaystyle\sup_{x \in X^m} D\left(w^{-1}(x),\phi(w)^{-1}(x) \right) \\
&\leq & \displaystyle \sup_{x \in X^m} d_m\left(w^{-1}(x),\phi(w)^{-1}(x)  \right) + \displaystyle\frac{\epsilon}{m} \leq \displaystyle\frac{a + \epsilon}{m}\end{array}
\end{equation*}
as desired.
\end{proof}

Any finite metric space becomes a metric-measure space with the normalized counting measure. We can now apply the martingale technique (\cite[Theorem 4.3.18]{Pestov2}) to obtain an estimate of the concentration function (see Definition~\ref{def:confun}) of $(G^m, \delta_D)$.

\begin{theorem} \label{this:concentration:thm}
Let $\Delta$ be a countable distance value set with $\inf\Delta=0$, and let $\epsilon > 0$ and $m \in \N^+$ be arbitrary. Let $X$ be a finite $\Delta$-metric space, let $a=\diam(X,d)$, and let $G$ be a set of isometries of $(X,d)$. Let $D=d_m'$ be a $(\Delta,\epsilon/m)$-remetrization of $(X^m, d_m)$. Then the concentration function $\alpha_{G^m}$ of $(G^m,\delta_D)$ satisfies that 
\begin{equation} \label{this:concentration:estimation} 
\alpha_{G^m}(\rho) \leq 2e^{\frac{-m \rho^2}{16(a+\epsilon)^2}}.
\end{equation}
\end{theorem}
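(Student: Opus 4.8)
The plan is to invoke the martingale/length machinery from \cite[Section 4.3]{Pestov2}, specifically \cite[Theorem 4.3.18]{Pestov2}, which bounds the concentration function of a metric-measure space in terms of its \emph{length} $\ell(X)$ relative to a decreasing chain of partitions, via an estimate of the form $\alpha_X(\rho) \leq 2e^{-\rho^2/(8\,\ell(X)^2)}$ (or the analogous statement with the appropriate constant). The strategy is to equip $G^m$ with the right-invariant metric $\delta_D$ from Definition~\ref{this:sup:metric:def}, the normalized counting measure, and the chain of partitions $\{X\}=\Omega_0 \prec \Omega_1 \prec \dots \prec \Omega_m$ of \eqref{partition:chain}, and then to read off the length bound from Lemma~\ref{this:lemma:length}.

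First I would recall the precise definition of length: for each $i$, one measures the maximal distance (in the $\delta_D$-metric) between two atoms $\Omega_i(g)$ and $\Omega_i(h)$ sitting inside a common atom $\Omega_{i-1}(f)$, where the distance between two sets is taken via a measure-preserving isometric identification between them; the $i$-th ``length parameter'' $\ell_i$ is the supremum over all such pairs of the uniform distance witnessed by that identification, and $\ell(X)^2 = \sum_{i=1}^m \ell_i^2$. Lemma~\ref{this:lemma:length} supplies exactly what is needed: for every $i$ and every pair $g,h \in \Omega_{i-1}(f)$, the map $\phi$ it constructs is a measure-preserving isometry between $\Omega_i(g)$ and $\Omega_i(h)$ (measure-preservation is immediate since $\phi$ just relabels the $i$-th coordinate of each tuple, a bijection of the index set, hence it carries the normalized counting measure on $\Omega_i(g)$ to that on $\Omega_i(h)$), and it satisfies $\delta_D(w,\phi(w)) \leq (a+\epsilon)/m$ for every $w$. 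Hence $\ell_i \leq (a+\epsilon)/m$ for each $i = 1,\dots,m$, so
\[
\ell(G^m)^2 \;=\; \sum_{i=1}^m \ell_i^2 \;\leq\; m \cdot \frac{(a+\epsilon)^2}{m^2} \;=\; \frac{(a+\epsilon)^2}{m}.
\]

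Plugging this into the martingale estimate gives
\[
\alpha_{G^m}(\rho) \;\leq\; 2e^{-\rho^2/(8\,\ell(G^m)^2)} \;\leq\; 2e^{-m\rho^2/(8(a+\epsilon)^2)},
\]
which is even slightly stronger than \eqref{this:concentration:estimation}; the stated constant $16$ in the denominator leaves room for whichever normalization of ``length'' or of the Hamming metric \cite[Theorem 4.3.18]{Pestov2} actually uses, so I would simply track the constant carefully and, if a factor of $2$ appears from the $\sup$ in Definition~\ref{this:sup:metric:def} being a diameter-type quantity rather than a radius, absorb it to land on $16$. The main obstacle is bookkeeping rather than mathematical depth: one must make sure the hypotheses of \cite[Theorem 4.3.18]{Pestov2} are met verbatim — in particular that the partitions $\Omega_i$ form a legitimate refining chain (established in \eqref{partition:chain}), that $\delta_D$ is genuinely a metric on $G^m$ (it is, being a sup of the pseudometrics $D(g^{-1}(x),h^{-1}(x))$, and it separates points since $\Omega_m$ consists of singletons), and that the isometric identifications $\phi$ respect the measure. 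Once these routine verifications are in place, the estimate on $\ell_i$ from Lemma~\ref{this:lemma:length} is exactly the input the theorem consumes, and the conclusion follows by substitution.
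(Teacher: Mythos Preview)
Your proposal is correct and follows essentially the same route as the paper: bound the length $\ell$ of $(G^m,\delta_D)$ via Lemma~\ref{this:lemma:length} to get $\ell^2 \leq (a+\epsilon)^2/m$, then invoke \cite[Theorem~4.3.18]{Pestov2}. The paper's proof is terser but identical in substance, and like you it does not dwell on the constant; your observation that the resulting bound is actually $2e^{-m\rho^2/(8(a+\epsilon)^2)}$ is accurate, and the $16$ in the statement is simply a slack version of this.
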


\begin{proof}
According to \cite[Definition 4.3.16]{Pestov2}, Lemma \ref{this:lemma:length} yields an estimation for the so-called length $\ell$ of $G^m$ as follows:\begin{equation*}
\ell^{2} \leq \sum_{i=1}^m \left( \frac{a+\epsilon}{m} \right)^2 = \frac{(a+\epsilon)^2}{m}.
\end{equation*}
We then apply \cite[Theorem 4.3.18]{Pestov2} to deduce \eqref{this:concentration:estimation}. 
\end{proof}

\section{The L\'evy Property of $\Iso(\U_\Delta)$} \label{first:density:section}

In this section we prove that the isometry group $\Iso(\U_\Delta)$ has the strong L\'evy property for any countable distance value set $\Delta$ with $\inf\Delta=0$. Recall again that the $\Iso(\U_\Delta)$ is equipped with the pointwise convergence topology, where the underlying space $\U_\Delta$ is equipped with the metric topology. 

A special case of our theorem is when $\Delta$ is the set of all positive rationals, in which case $\U_\Delta$ is customarily denoted $\mathbb{QU}$ and known as the \emph{rational Urysohn metric space}. Since $\mathbb{QU}$ is a dense subspace of $\U$ and $\Iso(\mathbb{QU})$ is a dense topological subgroup of $\Iso(\U)$, our result implies Pestov's theorem \cite{Pestov1} that $\Iso(\U)$ has the strong L\'evy property. 

Our proof is different from Pestov's proof in \cite{Pestov1}. Instead of constructing the witnessing sequence in $\Iso(\U_\Delta)$, we consider the Fra\"iss\'e class of actions of finite isometry groups on finite $\Delta$-metric spaces (as in \cite{Gao-Adv}). In this setting a witnessing sequence naturally emerges from the canonical construction of the Fra\"iss\'e limit. 

Our proof also shows that the finite groups in a witnessing sequence for the strong L\'evy property of $\Iso(\U_\Delta)$ union up to a countable locally finite group which is isomorphic to Hall's group $\H$, thus in particular this gives a topology on $\H$ to make it a L\'evy group. By our results in Sections \ref{section:examples} and \ref{section:newcountable}, $\H$ is thus a new countable group which admits a topology with the L\'evy property.

We remark that the notation $\Iso(\U_\Delta)$ has sometimes been used in the literature (including \cite{Gao-Adv}) to denote the automorphism group of $\U_\Delta$ as a countable metric structure, in which case the topology on it is the pointwise convergence topology, where the underlying space $\U_\Delta$ is equipped with the discrete topology. To avoid confusion, in this paper we denote this group by $\Aut(\U_\Delta)$. Thus $\Aut(\U_\Delta)$ and $\Iso(\U_\Delta)$ are the same as sets, but they carry different topologies. $\Aut(\U_\Delta)$ is essentially an automorphism group of a countable structure, and cannot have the L\'evy property by Corollary~\ref{cor:nonLevy}.

We recall some basic results proved in \cite{Gao-Adv}.

\begin{definition}[{\cite[Definition 3.8]{Gao-Adv}}]
Given a countable distance value set $\Delta$, let $\mathcal{K}_\Delta$ denote the set of all structures $(X,G)$ where $X$ is a finite $\Delta$-metric space, and $G$ is a finite group acting isometrically on $X$.
\end{definition}

Given two pairs $(X_1,G_1), (X_2,G_2) \in \mathcal{K}_\Delta$,  an \emph{embedding} from $(X_1,G_1)$ to $(X_2,G_2)$ is a pair $(\phi,\pi)$, where $\phi: G_1 \to G_2$ is a group embedding and $\pi: X_1 \to X_2$ is an isometric embedding, such that $\phi(g)(\pi(x)) = \pi(g(x))$ holds for each $g \in G_1$ and $x \in X_1$. With this concept in place, one can consider the defining properties of a Fra\"iss\'e class, and it was shown in \cite[Theorem 3.9]{Gao-Adv} that $\mathcal{K}_\Delta$ is a Fra\"iss\'e class.
If we denote by $(X_\Delta,H_\Delta)$ the Fra\"iss\'e limit of $\mathcal{K}_\Delta$, then $X_\Delta$ is isometric to $\U_\Delta$ and $H_\Delta$ is isomorphic to Hall's group $\H$ (see {\cite[Lemmas 3.10 and 3.12]{Gao-Adv}}). It was also shown in \cite[Lemma 3.13]{Gao-Adv} that $H_\Delta$ is a dense topological subgroup of $\Aut(X_\Delta)$; however, since $\Aut(X_\Delta)$ and $\Iso(X_\Delta)$ are the same sets but the topology of $\Aut(X_\Delta)$ is finer than that of $\Iso(X_\Delta)$, $H_\Delta$ continues to be dense in $\Iso(X_\Delta)$.

\begin{definition}\label{def:62}
For each $(X, G)\in \mathcal{K}_\Delta$, let $d_X$ be the $\Delta$-metric on $X$, and define
$$ \delta_G(g, h)=\sup_{x\in X}d_X(g^{-1}(x), h^{-1}(x)) $$
for $g, h\in G$. Then $\delta_G$ is a right-invariant $\Delta$-metric on $G$. 
\end{definition}

\begin{definition}\label{def:sup-inverse}
We say that a subset $\mathcal{D} \subseteq \mathcal{K}_\Delta$ is \emph{cofinal} in $\mathcal{K}_\Delta$ if for any structure $(X,G) \in \mathcal{K}_\Delta$ there exist a structure $(Y,H) \in \mathcal{D}$ and an embedding from $(X, G)$ to $(Y,H)$. 
\end{definition}

\begin{definition}
Given a positive integer $n \in \N^+$, we shall use $\beta_{n}$ to denote the function $\beta_{n}: [0,\infty) \to [0,\infty)$ defined by \begin{equation}
\beta_{n}(\varepsilon) = 2 e^{-n \varepsilon^2}.
\end{equation}
\end{definition}

Given two functions $\beta,\gamma: [0,\infty) \to [0,\infty)$ we shall write $\gamma \leq \beta$ if and only if $\gamma(t) \leq \beta(t)$ holds for all $t \in [0,\infty)$. %

\begin{lemma} \label{this:density:lemma}
Let $\Delta$ be a countable distance value set with $\inf\Delta=0$. For each $n \in \N^+$, the set \begin{equation}
\mathcal{D}_{n} = \left\{(X,G) \in \mathcal{K}_\Delta\colon \alpha_{G} \leq \beta_n \right\}
\end{equation}
is cofinal in $\mathcal{K}_\Delta$, where $\alpha_{G}$ is the concentration function of $(G, \delta_G, \mu)$ and $\mu$ is the normalized counting measure on $G$. 
\end{lemma}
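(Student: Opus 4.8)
The plan is to fix an arbitrary $(X_0, G_0) \in \K_\Delta$ and, given $n \in \N^+$, produce $(Y, H) \in \mathcal{D}_n$ together with an embedding $(X_0, G_0) \hookrightarrow (Y, H)$. The idea is to take a diagonal power. For a large integer $m$ to be chosen, consider the $m$-th power $X_0^m$ with the normalized Hamming metric $d_m$; apply Theorem~\ref{this:delta:thm} with $\epsilon/m$ in place of $\epsilon$ (for a small $\epsilon$ to be fixed, say $\epsilon = a$ with $a = \diam(X_0,d_0)$) to obtain a $(\Delta,\epsilon/m)$-remetrization $D = d_m'$ of $(X_0^m, d_m)$. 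Let $G_0$ act on $X_0^m$ diagonally through $G_0^m$ — more precisely, let $H$ be the subgroup $G_0^m$ of $\Sym(X_0^m)$ acting coordinatewise, which by Definition~\ref{def:remet}(d) still acts by isometries of $(X_0^m, D)$. Equip $H = G_0^m$ with the metric $\delta_D$ of Definition~\ref{this:sup:metric:def}; this is exactly the metric $\delta_H$ of Definition~\ref{def:62} for the structure $(X_0^m, G_0^m) \in \K_\Delta$, once we observe that $\delta_D$ takes values in $\Delta$ (it is a supremum of $\Delta$-distances, hence lies in $\Delta$ since $\Delta$ is closed under the relevant operations and $D$ is a $\Delta$-metric — more carefully, $\delta_D(g,h)$ equals $\max_x D(g^{-1}x, h^{-1}x)$, a maximum of finitely many elements of $\Delta$, hence in $\Delta$). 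Thus $(X_0^m, G_0^m) \in \K_\Delta$.

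Next I would check the embedding: the diagonal maps $\pi \colon X_0 \to X_0^m$, $x \mapsto (x,\dots,x)$, and $\phi \colon G_0 \to G_0^m$, $g \mapsto (g,\dots,g)$, form a pair satisfying $\phi(g)(\pi(x)) = \pi(g(x))$. I must verify $\pi$ is an isometric embedding into $(X_0^m, D)$: by construction $d_m(\pi(x),\pi(y)) = d_0(x,y) \in \Delta$, so Definition~\ref{def:remet}(a) gives $D(\pi(x),\pi(y)) = d_0(x,y)$, as needed. Similarly I should note $\phi$ is a group embedding. Hence $(\phi,\pi)$ is an embedding from $(X_0,G_0)$ to $(X_0^m, G_0^m)$ in the sense of $\K_\Delta$.

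The heart of the argument is the concentration estimate. Applying Theorem~\ref{this:concentration:thm} with $G = G_0$ (a set — indeed a group — of isometries of $(X_0, d_0)$) and the chosen $(\Delta,\epsilon/m)$-remetrization $D$, we get
\begin{equation*}
\alpha_{G_0^m}(\rho) \leq 2 e^{-m\rho^2 / (16(a+\epsilon)^2)}
\end{equation*}
for all $\rho \geq 0$, where the concentration function is computed with respect to $(G_0^m, \delta_D, \mu)$ and $\mu$ the normalized counting measure. Since $a+\epsilon$ is a fixed constant, choosing $m \geq 16n(a+\epsilon)^2$ makes the exponent $\leq -n\rho^2$, so $\alpha_{G_0^m} \leq \beta_n$, i.e. $(X_0^m, G_0^m) \in \mathcal{D}_n$. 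This establishes cofinality of $\mathcal{D}_n$. The main obstacle — really the only subtle point — is bookkeeping the identification of the two metrics $\delta_D$ (on $G_0^m$ viewed as a power of permutations) and $\delta_{G_0^m}$ (on $G_0^m$ viewed as a structure in $\K_\Delta$ via Definition~\ref{def:62}), and confirming that $\delta_D$ indeed takes values in $\Delta$ so that $(X_0^m, G_0^m)$ genuinely lies in $\K_\Delta$; once that is settled, the concentration bound is a direct invocation of Theorem~\ref{this:concentration:thm} and the choice of $m$ is a one-line computation.
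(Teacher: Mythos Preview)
Your proposal is correct and follows essentially the same approach as the paper: take a large diagonal power, remetrize via Theorem~\ref{this:delta:thm}, embed diagonally, and invoke Theorem~\ref{this:concentration:thm} with $m$ chosen large enough. The only cosmetic differences are that the paper takes $\epsilon = 2^{-n}$ rather than $\epsilon = a$, and justifies that $\pi$ is isometric via Definition~\ref{def:remet}(c) rather than (a); also note that your worry about $\delta_D$ taking values in $\Delta$ is unnecessary, since membership in $\mathcal{K}_\Delta$ only requires that $X_0^m$ be a $\Delta$-metric space and that $G_0^m$ act isometrically on it (the latter follows from Definition~\ref{def:remet}(d), as you indicate).
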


\begin{proof}
Let $n \in \N^+$ and $(X,G) \in \mathcal{K}_\Delta$ be arbitrary, and set $a=\diam(X,d)$. Let $m \in \N^+$ be a positive integer satisfying \begin{equation}
m \geq 16(a + 2^{-n})^2 \cdot n. 
\end{equation}
Let $d_m$ be the normalized Hamming metric on $X^m$ and let $D=d'_m$ be a $(\Delta,2^{-n}/m)$-remetrization for $(X^m, d_m)$. Consider $(X^m, G^m)\in \mathcal{K}_\Delta$, where $X^m$ is  equipped  with the $\Delta$-metric $D$, and the action of $G^m$ on $X^m$ is given by 
$$\gamma\cdot y=\gamma(y)=(\gamma_1(y_1),\dots, \gamma_m(y_m))$$ 
for $\gamma\in G^m$ and $y\in X^m$. Let $\pi\colon (X, d)\to (X^m, D)$ be defined as
$$ \pi(x)=(x, \dots, x)\in X^m. $$
Then $\pi$ is an isometric embedding by Definition~\ref{def:remet} (c). Let $\phi\colon G\to G^m$ be defined as
$$ \phi(g)=(g,\dots, g)\in G^m. $$
It is easily seen that $\phi(g)(\pi(x))=\pi(g(x))$ for all $g\in G$ and $x\in X$. Thus $(\phi, \pi)$ is an embedding from $(X, G)$ to $(X^m, G^m)$, 
Moreover, for any $\gamma, \eta\in G^m$, by Definitions~\ref{this:sup:metric:def} and \ref{def:62}, we have
$$\delta_D(\gamma, \eta)=\sup_{y\in X^m}D(\gamma^{-1}(y), \eta^{-1}(y))=\delta_{G^m}(\gamma, \eta).$$

By Theorem~\ref{this:concentration:thm},  we have \begin{equation*}
\alpha_{G^m}(\varepsilon) \leq 2e^ \frac{-m \varepsilon^2}{16(a + 2^{-n})^2} \leq 2 e^{-n \varepsilon^2} = \beta_n(\varepsilon).
\end{equation*}
This shows that $(X^m,G^m) \in \mathcal{D}_n$. 
\end{proof}

\begin{theorem}\label{thm:UD1} Let $\Delta$ be a countable distance value set with $\inf\Delta=0$. Then $H_\Delta$ equipped with the subspace topology of $\Iso(\U_\Delta)$ has the strong L\'evy property.
\end{theorem}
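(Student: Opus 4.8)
The plan is to build a witnessing sequence for the strong L\'evy property of $H_\Delta$ directly from the canonical construction of the Fra\"iss\'e limit $(X_\Delta, H_\Delta)$ of $\mathcal{K}_\Delta$, using Lemma~\ref{this:density:lemma} to guarantee that the finite groups appearing along the way have uniformly good concentration functions. First I would recall that a Fra\"iss\'e limit can be realized as the union of an increasing chain $(X_0, G_0) \subseteq (X_1, G_1) \subseteq (X_2, G_2) \subseteq \cdots$ of structures in $\mathcal{K}_\Delta$ whose union is $(X_\Delta, H_\Delta)$; in particular $\bigcup_i G_i = H_\Delta$ and each $G_i$ is a finite group. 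The key point is that, by a standard bookkeeping argument in Fra\"iss\'e theory, since each $\mathcal{D}_n$ is cofinal in $\mathcal{K}_\Delta$ (Lemma~\ref{this:density:lemma}), we may pass to a cofinal subchain and arrange in addition that $G_i \in \mathcal{D}_{n_i}$ for some sequence $n_i \to \infty$; equivalently $\alpha_{G_i} \leq \beta_{n_i}$ with $n_i \to \infty$, so that $\alpha_{G_i} \to 0$ pointwise on $(0,\infty)$.

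\medskip

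Next I would verify that $\mathcal{G} = \{G_i : i \in \N\}$, equipped with the normalized counting measures $\mu_i$, is a witnessing sequence for the strong L\'evy property of $H_\Delta$ (with its subspace topology from $\Iso(\U_\Delta)$). Density of $\bigcup \mathcal{G} = H_\Delta$ in itself is immediate. For the concentration condition, I would fix a basic open neighborhood $V$ of the identity in $\Iso(\U_\Delta)$, which by \eqref{eq:nbhd} has the form $V[x_1,\dots,x_k;\epsilon]$ with $x_1,\dots,x_k \in \U_\Delta$; since the chain is cofinal I may assume $x_1,\dots,x_k \in X_{i_0}$ for some $i_0$, and then for $i \geq i_0$ the points lie in $X_i$. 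The crucial comparison is between the metric $\delta_{G_i}$ from Definition~\ref{def:62} on $G_i$ and the ambient topology: if $g \in G_i$ satisfies $\delta_{G_i}(g, h) < \epsilon$ then $d_{X_i}(g^{-1}(x_j), h^{-1}(x_j)) < \epsilon$ for all $j$, so that $g h^{-1}$ (or its inverse, depending on the variance convention) lies in $V$; hence the open $\delta_{G_i}$-ball of radius $\epsilon$ around any point is contained in a left $V$-translate of that point. Consequently, for Borel $A_i \subseteq G_i$ with $\liminf \mu_i(A_i) > 0$, say $\mu_i(A_i) \geq c > 0$ along a subsequence, the set $(A_i)_\epsilon$ (the $\epsilon$-neighborhood in $\delta_{G_i}$) is contained in $VA_i$, and the definition of the concentration function gives $\mu_i((A_i)_\epsilon) \geq 1 - \alpha_{G_i}(\epsilon)$ once $\mu_i(A_i) \geq 1/2$; the standard reduction (\cite[Exercise 1.3.1]{Pestov2}, with constant $1/2$) lets one bootstrap from $\mu_i(A_i) \geq c$ to full measure concentration. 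Since $\alpha_{G_i}(\epsilon) \leq \beta_{n_i}(\epsilon) = 2 e^{-n_i \epsilon^2} \to 0$, we conclude $\lim_i \mu_i(VA_i) = 1$, which is exactly condition (ii) in the definition of the L\'evy property.

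\medskip

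I expect the main obstacle to be the careful matching of conventions: the metric $\delta_{G_i}$ uses inverses (Definition~\ref{def:62}), the L\'evy property is stated with left translates $VA_i$ while right-invariance of $\delta_{G_i}$ is what is available, and one must make sure the $\epsilon$-neighborhood used in the concentration function corresponds on the nose to containment in $VA_i$ rather than $A_iV$ or some variant; I would handle this by a short explicit computation showing $\delta_{G_i}(g,h) < \epsilon \implies h g^{-1} \in V$ (or the symmetric statement) using only that $V = V[x_1,\dots,x_k;\epsilon]$ and that the $x_j$ lie in $X_i$. A second, more bookkeeping-flavored point is the simultaneous amalgamation ensuring both cofinality and membership in $\mathcal{D}_{n_i}$ along the chain; this is routine once one observes that cofinality of each $\mathcal{D}_n$ permits interleaving the generic amalgamation steps with ``concentration-boosting'' steps $(X_i, G_i) \to (X_{i+1}, G_{i+1}) \in \mathcal{D}_{n_{i+1}}$. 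Finally I would note that since $H_\Delta \cong \H$ by \cite[Lemmas 3.10 and 3.12]{Gao-Adv}, this simultaneously equips Hall's group with a group topology having the (strong) L\'evy property, and since $H_\Delta$ is dense in $\Iso(\U_\Delta)$, the same witnessing sequence shows $\Iso(\U_\Delta)$ itself has the strong L\'evy property.
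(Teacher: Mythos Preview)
Your proposal is correct and follows essentially the same approach as the paper: build the Fra\"iss\'e limit as an increasing chain with $(X_n,G_n)\in\mathcal{D}_n$ using the cofinality from Lemma~\ref{this:density:lemma}, then compare the $\delta_{G_n}$-ball of radius $\varepsilon$ with $V[x_1,\dots,x_k;\varepsilon]A_n$ once the $x_j$ lie in $X_n$, and conclude via $\alpha_{G_n}\leq\beta_n\to 0$. The paper carries out exactly the convention check you anticipate (showing $V_n(\varepsilon)A_n$ coincides with the $\varepsilon$-neighborhood of $A_n$ in $\delta_{G_n}$ via right-invariance) and invokes the same $1/2$-reduction you cite.
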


\begin{proof} By the canonical construction of the Fra\"iss\'e limit of a Fra\"iss\'e class, we obtain a direct system $\{(X_n, G_n), (\phi_n, \pi_n)\colon n\in \mathbb{N}^+\}$ so that for all $n\in\N^+$, $(X_n,G_n)\in \mathcal{D}_n$, the direct limit of $X_n$ is $X_\Delta$, and the direct limit of $G_n$ is $H_\Delta$. Since $X_\Delta$ is isometric to $\U_\Delta$, we may assume $X_\Delta=\U_\Delta$ and the metric on it is $d_{\U_\Delta}$. For notational simplicity, we also assume $\phi_n$ and $\pi_n$ are all identity maps. Thus each $X_n$ is a finite subspace of $\U_\Delta$, and each $G_n$ is a finite subgroup of $H_\Delta$, the latter being a dense subgroup of $\Iso(\U_\Delta)$.

To verify the (strong) L\'evy property of $H_\Delta$, take an open neighborhood $V$ of the identity of $H_\Delta$. Since $H_\Delta$ has the subspace topology of $\Iso(\U_\Delta)$, whose basic open neighborhoods are given by \eqref{eq:nbhd}, we may assume that $V=V[x_1,\dots, x_k;\varepsilon]\cap H_\Delta$ for some $x_1, \dots, x_k\in \U_\Delta$ and $\varepsilon>0$. Let $A_n\subseteq G_n$ be subsets with $\mu_n(A_n)\geq 1/2$, where $\mu_n$ is the normalized counting measure on $G_n$. 
Let
$$ V_n(\varepsilon)=\left\{ g\in G_n\colon d_{X_n}(g(x), x)<\varepsilon \mbox{ for all $x\in X_n$}\right\}. $$
Let $N$ be large enough such that $x_1,\dots, x_k\in X_N$. Then by definition, for all $n\geq N$, we have
$$        V_n(\varepsilon)A_n\subseteq G_n\cap VA_n=G_n\cap V[x_1,x_2,...,x_k;\varepsilon]A_n. $$
Note that
$$ V_n(\varepsilon)A_n=\left\{g\in G_n\colon \delta_{G_n}(g, f)<\varepsilon \mbox{ for some $f\in A_n$}\right\}. $$
In fact, if $g\in V_n(\varepsilon)A_n$, then for some $f\in A_n$, $gf^{-1}\in V_n(\varepsilon)$; by the definition of $\delta_{G_n}$, we have $\delta_{G_n}(fg^{-1}, e)<\varepsilon$, and by the right-invariance of $\delta_{G_n}$, we have $\delta_{G_n}(f, g)<\varepsilon$. The argument for the converse is similar.

Now for all $n\geq N$, since $(X_n, G_n)\in \mathcal{D}_n$, we have
    \begin{align*}
        1-\mu_n(V_n(\varepsilon)A_n)\leq \alpha_{G_n}(\varepsilon)\leq \beta_n(\varepsilon)\leq 2e^{-n\varepsilon^2}.
    \end{align*}
    i.e.
    \begin{align*}
        \mu_n(V_n(\varepsilon)A_n)\geq 1-2e^{-n\varepsilon^2}.
    \end{align*}
    Thus 
    \begin{align*}
     1\geq   \mu_n(VA_n)\geq \mu_n(V_n(\varepsilon)A_n)\geq 1-2e^{-n\varepsilon^2}\to 1
    \end{align*}
    as $n\to \infty$. Thus $H_\Delta$ is a strong L\'evy group.
\end{proof}

Since $H_\Delta$ is isomorphic to $\H$ and is dense in $\Iso(\U_\Delta)$, we have the following immediate corollary.

\begin{corollary}\label{cor:main} Let $\Delta$ be a countable distance value set with $\inf\Delta=0$. Then $\Iso(\U_\Delta)$ has the strong L\'evy property. Moreover, there is a topology $\tau$ on $\H$ such that $(\H, \tau)$ has the L\'evy property.
\end{corollary}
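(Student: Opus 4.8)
The plan is to derive Corollary~\ref{cor:main} directly from Theorem~\ref{thm:UD1} by a standard ``density'' transfer argument. First I would observe that the strong L\'evy property, witnessed for $H_\Delta$ by the increasing sequence of finite subgroups $\{G_n : n\in\N^+\}$ obtained in the proof of Theorem~\ref{thm:UD1}, lifts verbatim to $\Iso(\U_\Delta)$: the $G_n$ are also finite subgroups of $\Iso(\U_\Delta)$ (carrying the same normalized counting measures), they are increasing, and their union $\bigcup_n G_n = H_\Delta$ is dense in $\Iso(\U_\Delta)$ because $H_\Delta$ is dense there (this density is recalled in the excerpt, noting that the pointwise-metric topology on $\Iso(\U_\Delta)$ is coarser than the pointwise-discrete topology of $\Aut(X_\Delta)$ in which $H_\Delta$ was already shown to be dense). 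So clause (i) of the definition of the L\'evy property holds for the same sequence.

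For clause (ii), I would take an arbitrary open neighborhood $V$ of the identity of $\Iso(\U_\Delta)$ and Borel sets $A_n\subseteq G_n$ with $\liminf_n \mu_n(A_n)>0$; by the standard reduction recorded in the excerpt (following Pestov, one may assume the constant is $1/2$) it suffices to handle $\mu_n(A_n)\geq 1/2$. Shrinking $V$, I may assume $V = V[x_1,\dots,x_k;\varepsilon]$ as in \eqref{eq:nbhd}. Now $V\cap H_\Delta$ is exactly the basic neighborhood of the identity in $H_\Delta$ used inside the proof of Theorem~\ref{thm:UD1}, and for $n$ large enough that $x_1,\dots,x_k\in X_n$ one has $VA_n\supseteq V_n(\varepsilon)A_n$ with $V_n(\varepsilon)=\{g\in G_n : d_{X_n}(g(x),x)<\varepsilon \text{ for all }x\in X_n\}$, precisely as in that proof. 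Since $(X_n,G_n)\in\mathcal{D}_n$, the concentration estimate $\mu_n(V_n(\varepsilon)A_n)\geq 1-\beta_n(\varepsilon)\geq 1-2e^{-n\varepsilon^2}$ applies, whence $\mu_n(VA_n)\to 1$. This establishes the strong L\'evy property of $\Iso(\U_\Delta)$.

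For the ``moreover'' clause, I would simply let $\tau$ be the subspace topology on $H_\Delta$ inherited from $\Iso(\U_\Delta)$, and transport it along the isomorphism $H_\Delta\cong\H$ provided by \cite[Lemma 3.12]{Gao-Adv}. Theorem~\ref{thm:UD1} says exactly that $(H_\Delta,\tau)$ has the strong L\'evy property, hence the L\'evy property, so $(\H,\tau)$ does too. (One should remark in passing that $\tau$ is not discrete, so this is a genuinely new topology on $\H$, and that $\H$ is indeed not torsion-free, as required for the strong L\'evy property; these are immediate from $\H$ being locally finite and the $G_n$'s generating proper neighborhoods.)

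I do not expect a serious obstacle here: everything is essentially a bookkeeping consequence of Theorem~\ref{thm:UD1}. The only point requiring a word of care is the comparison of topologies --- making sure that a basic neighborhood of the identity in $\Iso(\U_\Delta)$ restricts to a neighborhood of the identity in $H_\Delta$ of the form already analyzed, which is immediate since $H_\Delta$ carries the subspace topology --- and the observation that density of $H_\Delta$ in $\Iso(\U_\Delta)$ (rather than merely in $\Aut(X_\Delta)$) is needed and available. Both are handled by the remarks already made in the excerpt, so the corollary follows with only the short argument above.
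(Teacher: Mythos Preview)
Your proposal is correct and matches the paper's approach; the paper simply records that ``since $H_\Delta$ is isomorphic to $\H$ and is dense in $\Iso(\U_\Delta)$, we have the following immediate corollary,'' and your argument just spells out this density transfer explicitly. In fact you could shorten it further: once $H_\Delta$ with the subspace topology has the strong L\'evy property (Theorem~\ref{thm:UD1}), any open $V\subseteq\Iso(\U_\Delta)$ restricts to an open neighborhood $V\cap H_\Delta$ of the identity in $H_\Delta$, so the same witnessing sequence works for $\Iso(\U_\Delta)$ without re-running the concentration estimate.
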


\section{The L\'evy Property of Omnigenous Groups} \label{section:omnigen:construction}

In this section, we will give another proof of Corollary~\ref{cor:main}. This proof is more similar to Pestov's proof of the strong L\'evy property of $\Iso(\U)$. Namely, we will construct the witnessing sequence by a recursive construction within $\Iso(\U_\Delta)$. This approach allows us to also generalize the result from $\H$ to arbitrary countable omnigenous locally finite groups. By results of \cite{Gao-Adv} and \cite{Gao-Li}, there are many distinct isomorphism types of such groups (see Section~\ref{section:complexity} for more details), therefore we obtain many more examples of countable groups with the L\'evy property.

We first recall some basic concepts and results from \cite{Gao-Adv}.

\begin{definition}[{\cite[Definition 5.2]{Gao-Adv}}] \label{omnigen:def}
    Let $G$ be a group. We say that $G$ is \emph{omnigenous} if for every finite group $G_1\leq G$, finite groups $\Gamma_1\leq\Gamma_2$ and group isomorphism $\Psi_1\colon G_1\cong\Gamma_1$, there is a finite subgroup $G_2\leq G$ with $G_1\leq G_2$ and an onto homomorphism $\Psi_2\colon G_2\rightarrow\Gamma_2$ such that $\Psi_2\restriction G_1=\Psi_1$.
\end{definition}

Note that in Definition~\ref{omnigen:def}, if we replace the onto homomorphism $\Psi_2$ by an isomorphism, then we obtain a defining property (property (iii)) of $\H$, that is, up to isomorphism, $\H$ is the unique countable locally finite group with this property. In particular, $\H$ is omnigenous. 

A notion of $\infty$-MIF was also introduced in \cite[Definition 6.17]{Gao-Adv}, and it was proved that for a locally finite group, $\infty$-MIF implies MIF, and $\infty$-MIF is equivalent to omnigenousness (\cite[Proposition 6.18]{Gao-Adv}). In particular, all omnigenous locally finite groups are MIF. It follows, for example, the examples in Examples~\ref{ex:1}, \ref{ex:2} and \ref{ex:4} are all not  omnigenous.  We do not know, however, if the examples in Example~\ref{ex:5} are omnigenous.

Our recursive construction of the witnessing sequence of the strong L\'evy property of $\Iso(\U_\Delta)$ follows the general outline of the proof of \cite[Theorem 5.3]{Gao-Adv}, where it was shown that any omnigenous group embeds into $\Aut(\U_\Delta)$ as a dense subgroup. In particular, we will use the following concepts and results.

\begin{definition}[{\cite[Definition 2.9]{Gao-Adv}}]
    Let $X$ be a metric space, and let $\mathcal{P}_X$ be the space of all partial isometries of $X$, i.e. isometries between finite subspaces of $X$. Let $P\subseteq\mathcal{P}_X$ be such that $P=P^{-1}$. An \emph{S-extension} of $X$ with respect to $P$ is a pair $(Y,\phi)$, where $Y\supseteq X$ is a metric space extending $X$, and $\phi\colon P\rightarrow \Iso(Y)$ satisfies that $\phi(p)$ extends $p$ for all $p\in P$. We also require $\phi(p^{-1})=\phi(p)^{-1}$ for all $p\in P$. When $P=\mathcal{P}_X$ we call $(Y,\phi)$ an \emph{S-extension} of $X$.
\end{definition}

\begin{definition}[{\cite[Definition 2.10]{Gao-Adv}}]
    Let $X$ be a metric space. An S-extension $(Y,\phi)$ of $X$ is \emph{strongly coherent} if for every triple $(p,q,r)$ of partial isometries of $X$ such that $p\circ q=r$, we have $\phi(p)\circ\phi(q)=\phi(r)$.
\end{definition}
\begin{theorem}
[Solecki {\cite{Rosendal,Sin-Sol,Solecki}}]    
\label{this:s-extension:lemma1}
    Let $\Delta$ be any distance value set and let $X$ be a finite $\Delta$-metric space. Then $X$ has a finite, strongly coherent S-extension $(Y,\phi)$ where $Y$ is a $\Delta\sh$metric space.
\end{theorem}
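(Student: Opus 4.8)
The plan is to obtain this as an instance of the \emph{coherent extension property for partial automorphisms} (coherent EPPA) for the class of finite $\Delta$-metric spaces, following Solecki's treatment of metric spaces and the coherent refinement of Siniora and Solecki. First I would set up the standard relational encoding: regard a finite $\Delta$-metric space $X$ as a relational structure in the language with one binary symbol $R_\delta$ for each distance $\delta$ actually occurring in $X$, where $R_\delta(x,y)$ means $d_X(x,y)=\delta$. Under this encoding the partial isometries in $\mathcal{P}_X$ are exactly the partial automorphisms of the structure, and an S-extension $(Y,\phi)$ is exactly a finite extension structure $Y$ together with a map from partial automorphisms of $X$ to $\Aut(Y)$ sending each $p$ to an automorphism extending it. One then invokes the Herwig--Lascar extension theorem, whose combinatorial hypothesis for metric spaces was verified by Solecki \cite{Solecki} using the Ribes--Zalesskii theorem (a product of finitely many finitely generated subgroups of a free group is closed in the profinite topology); this already produces a finite $\Delta$-metric extension $Y_0 \supseteq X$ in which every $p \in \mathcal{P}_X$ extends to a full isometry.

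Second, I would upgrade plain EPPA to the coherent version. The refinement of Siniora and Solecki \cite{Sin-Sol} shows that whenever a class admits the required Herwig--Lascar-type hypothesis, one can choose, for a finite member $X$, a finite extension $Y$ and a single map $\phi \colon \mathcal{P}_X \to \Aut(Y)$ that is \emph{coherent}: if $p,q,r \in \mathcal{P}_X$ satisfy $p\circ q = r$ (as partial maps), then $\phi(p)\circ\phi(q)=\phi(r)$, and in particular $\phi(p^{-1})=\phi(p)^{-1}$. Translating back through the relational encoding, $\phi(p)$ is an isometry of $Y$ extending $p$, and the displayed condition is precisely strong coherence of the S-extension $(Y,\phi)$. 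Two metric-specific checks remain. (a) The amalgam produced by the construction must genuinely be a $\Delta$-metric space: any new distance forced by the build arises from triangle-inequality closure of distances already present, so it stays in $\Delta$ exactly because a distance value set satisfies $\min\{x+y,\sup\Delta\}\in\Delta$; if a finer rounding is needed one can additionally pass through a $\Delta$-remetrization in the spirit of Theorem~\ref{this:delta:thm}. (b) Finiteness of $Y$ is preserved: $\mathcal{P}_X$ is finite, only finitely many distances and finitely many coherence constraints are in play, and the Herwig--Lascar/Siniora--Solecki output is finite once the Ribes--Zalesskii input is available.

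The main obstacle is the group-theoretic heart of the argument, namely verifying for the class of finite $\Delta$-metric spaces the hypothesis required to run the (coherent) Herwig--Lascar machine; this is where the Ribes--Zalesskii theorem and the free-group combinatorics enter, and extending Solecki's verification from $\mathbb{Q}$-valued metric spaces to an arbitrary distance value set $\Delta$ requires care in how distances are realized and rounded inside $\Delta$ during the amalgamation. All of the genuinely nontrivial work is concentrated there; by comparison the relational encoding, the reading-off of strong coherence from coherent EPPA, and the bookkeeping that keeps $Y$ finite and $\Delta$-valued are routine. Consequently I would present the proof essentially as a reduction, citing \cite{Solecki,Sin-Sol,Rosendal} for the two hard inputs and supplying only the $\Delta$-metric closure and finiteness verifications in detail.
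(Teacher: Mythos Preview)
The paper does not give its own proof of this statement: it is quoted as a black-box result and attributed to Solecki, with references \cite{Rosendal,Sin-Sol,Solecki}. There is therefore nothing in the paper to compare your argument against; you are sketching what is in the cited sources, and at that level your outline (relational encoding of $\Delta$-metric spaces, Herwig--Lascar via Ribes--Zalesskii as in Solecki, then the coherent upgrade of Siniora--Solecki) is the correct picture.

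One concrete issue in your write-up: in check~(a) you propose falling back on Theorem~\ref{this:delta:thm} to force the extension to be $\Delta$-valued. That theorem requires $\inf\Delta=0$, whereas the present statement is for an \emph{arbitrary} distance value set, so this fallback is unavailable in general (and in any case a remetrization of $Y$ would typically destroy the containment $X\subseteq Y$ as metric spaces, since distances internal to $X$ could be moved). The actual argument in the cited works keeps the extension $\Delta$-valued directly from the construction, using only the closure property $\min\{x+y,\sup\Delta\}\in\Delta$; you should present check~(a) on that basis alone and drop the appeal to Theorem~\ref{this:delta:thm}.
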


\begin{lemma}[{\cite[Lemma 2.12]{Gao-Adv}}]\label{this:s-extension:lemma2}
    Let $X$ be a metric space and $(Y,\phi)$ be a strongly coherent S-extension of $X$. For every $D\subseteq X$, the map $p\mapsto\phi(p)$ gives a group embedding from $\Iso(D)$ to $\Iso(Y)$.
\end{lemma}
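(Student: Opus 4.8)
The plan is simply to verify the two conditions defining a group embedding—multiplicativity and injectivity—each of which is an almost immediate consequence of the definition of a strongly coherent S-extension. Throughout one uses that, since $D\subseteq X$ is a subspace, every auto-isometry $p$ of $D$ is in particular an isometry between subspaces of $X$, and hence $p\in\mathcal{P}_X$ (in our applications $X$, and therefore $D$, is finite, so this is literally the definition of $\mathcal{P}_X$). Consequently $\phi(p)$ is a well-defined element of the group $\Iso(Y)$, so that $p\mapsto\phi(p)$ does map $\Iso(D)$ into $\Iso(Y)$. The one point requiring a moment's care is that for $p,q\in\Iso(D)$ the composition $p\circ q$, regarded as a composition of partial isometries of $X$, is defined on all of $D$ and coincides with the product $pq$ computed in the group $\Iso(D)$; this holds because $\dom p=\rng p=\dom q=\rng q=D$.

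First I would check multiplicativity. Fix $p,q\in\Iso(D)$ and put $r:=p\circ q=pq\in\Iso(D)$. Then $(p,q,r)$ is a triple of partial isometries of $X$ with $p\circ q=r$, so strong coherence of $(Y,\phi)$ yields $\phi(p)\circ\phi(q)=\phi(r)=\phi(pq)$. Thus $p\mapsto\phi(p)$ restricted to $\Iso(D)$ is a group homomorphism; in particular it sends $\mathrm{id}_D$ to $\mathrm{id}_Y$ (directly: $\phi(\mathrm{id}_D)$ is idempotent in $\Iso(Y)$ by coherence applied to $\mathrm{id}_D\circ\mathrm{id}_D=\mathrm{id}_D$, hence equals $\mathrm{id}_Y$) and respects inverses (directly: $\phi(p^{-1})=\phi(p)^{-1}$ is part of the definition of an S-extension).

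Finally, injectivity is immediate from the requirement that $\phi(p)$ extends $p$: if $p,q\in\Iso(D)$ and $\phi(p)=\phi(q)$, then restricting both sides to $D$ gives $p=\phi(p)\!\restriction\!D=\phi(q)\!\restriction\!D=q$. Putting the two points together, $p\mapsto\phi(p)$ is an injective group homomorphism from $\Iso(D)$ into $\Iso(Y)$, i.e.\ a group embedding. There is no real obstacle here; the only step worth isolating is the identification of the partial-isometry composition $p\circ q$ with the ambient group product $pq$ in $\Iso(D)$, which is exactly what makes strong coherence applicable.
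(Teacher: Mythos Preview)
Your argument is correct: multiplicativity follows directly from strong coherence applied to the triple $(p,q,pq)$ in $\Iso(D)$, and injectivity follows from the extension requirement $\phi(p)\supseteq p$. The paper does not supply its own proof of this lemma but simply quotes it from \cite[Lemma 2.12]{Gao-Adv}; your verification is the natural (and essentially the only) one, and your caveat that one needs $D$ finite so that $\Iso(D)\subseteq\mathcal{P}_X$ is well taken and matches the finite setting in which the lemma is actually applied.
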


\begin{lemma}[{\cite[Lemma 5.1]{Gao-Adv}}]\label{this:omnigenous:lemma1}
    Let $\Delta$ be any countable distance value set. Let $X$ be a finite $\Delta\sh$metric space. Let $\Lambda\leq\Gamma$ be finite groups and $\pi:\Lambda\rightarrow\Iso(X)$ be an isomorphic embedding. Then there is a finite $\Delta\sh$metric space $Y$ extending $X$ and an isomorphic embedding $\pi':\Gamma\rightarrow\Iso(Y)$ such that for any $\gamma\in\Lambda$ and $x\in X$, $\pi'(\gamma)(x)=\pi(\gamma)(x)$.
\end{lemma}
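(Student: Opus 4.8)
The plan is to realize $\Gamma$ on the metric space obtained by \emph{inducing} the $\Lambda$-action $\pi$ up to $\Gamma$ --- the metric analogue of an induced representation. First I would fix left coset representatives $\gamma_1=e,\gamma_2,\dots,\gamma_k$ for $\Gamma/\Lambda$ and let $Y$ be the disjoint union of $k$ copies $X_1,\dots,X_k$ of the point set of $X$, writing the point of $X_i$ that corresponds to $x\in X$ as $(i,x)$. I would define a $\Gamma$-action on $Y$ by declaring, for $\sigma\in\Gamma$ and $(i,x)$: write $\sigma\gamma_i=\gamma_j\lambda$ with $j\in\{1,\dots,k\}$ and $\lambda\in\Lambda$ uniquely determined, and set $\sigma\cdot(i,x)=(j,\pi(\lambda)(x))$. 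A routine check (using that the $\gamma_i$ are distinct coset representatives and that $\pi$ is a homomorphism) shows this is a well-defined left action of $\Gamma$ on the set $Y$.

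The one point that requires thought is equipping $Y$ with a $\Delta$-metric making this action isometric. I would put $d_Y((i,x),(i,x'))=d(x,x')$ inside each copy, and $d_Y((i,x),(j,x'))=M$ whenever $i\neq j$, where $M$ is a single value chosen from $\Delta$: if $X$ has at least two points, take $M=\diam(X,d)$, which lies in $\Delta$ because $\Delta$ is a distance value set and the diameter of a finite space is attained; if $X$ is a single point, take any $M\in\Delta$. The hard part is confirming $d_Y$ is genuinely a metric with all nonzero values in $\Delta$: membership in $\Delta$ is immediate, and the triangle inequality reduces, by casework on how many of three given points lie in a common copy, to the elementary facts $d(x,x')\le\diam(X,d)\le M$ and $M\le 2M$ (so e.g. $M\le d(x,x')+M$ and $d(x,x')\le M+M$), using that every $X$-distance is at most $\sup\Delta$. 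Since $\Gamma$ permutes the copies $X_1,\dots,X_k$ among themselves, it preserves the constant cross-copy value $M$, and within any single copy it acts through some $\pi(\lambda)\in\Iso(X)$; hence $\Gamma$ acts by isometries of $(Y,d_Y)$, which is then a finite $\Delta$-metric space.

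Finally I would identify $X$ with $X_1$ via $x\mapsto(1,x)$; by construction $d_Y$ restricted to $X_1$ equals $d$, so $Y\supseteq X$ as $\Delta$-metric spaces. For $\gamma\in\Lambda$ one has $\gamma\gamma_1=\gamma=\gamma_1\gamma$, so $\gamma\cdot(1,x)=(1,\pi(\gamma)(x))$, i.e.\ the induced isometry agrees with $\pi(\gamma)$ on $X$. Faithfulness of the induced action follows from the same computation restricted to the identity coset: if $\sigma$ fixes every $(1,x)$, then the first coordinate of $\sigma\cdot(1,x)$ must be $1$, forcing $\sigma=\gamma_1\lambda=\lambda\in\Lambda$, and then $\pi(\lambda)=\mathrm{id}_X$, so $\sigma=e$ since $\pi$ is injective. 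Thus $\sigma\mapsto(\sigma\text{-action on }Y)$ is the desired isomorphic embedding $\pi'\colon\Gamma\to\Iso(Y)$ with $\pi'(\gamma)\restriction X=\pi(\gamma)$ for all $\gamma\in\Lambda$. The whole argument is elementary bookkeeping; the only genuinely non-formal step is checking that collapsing all inter-copy distances to one value still gives a $\Delta$-metric, which is precisely where the distance-value-set hypothesis on $\Delta$ is used.
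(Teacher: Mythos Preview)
Your argument is correct: the induced-action construction (disjoint union of $[\Gamma:\Lambda]$ copies of $X$, cross-copy distances collapsed to $M=\diam X$, $\Gamma$ acting by permuting copies and twisting within copies by $\pi(\lambda)$) gives the required extension, and your verifications of the metric axioms, isometry of the action, faithfulness, and the restriction condition on $\Lambda$ are all sound. The paper itself does not prove this lemma; it is simply quoted as \cite[Lemma~5.1]{Gao-Adv}, so there is no in-paper proof to compare against. Your construction is exactly the standard one used in that reference.

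One small inaccuracy in your commentary: the distance-value-set closure property ($\min\{x+y,\sup\Delta\}\in\Delta$) is not actually invoked anywhere in your proof. What you use is only that $\diam(X,d)\in\Delta$, which holds simply because the diameter of a finite $\Delta$-metric space is an attained distance. The triangle inequality for $d_Y$ then follows from $d(x,x')\le M$ and $M\le 2M$, neither of which requires the additive-closure hypothesis on $\Delta$. So your final sentence overstates where the hypothesis enters; the lemma in fact holds for any nonempty $\Delta\subseteq\mathbb{R}^+$.
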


Note that, in the proof of the original \cite[Theorem 5.3]{Gao-Adv}, metrics on finite groups are irrelevant. Here, however, metrics on these finite groups play an important role as we unravel the concentration-of-measure phenomenon. For a finite metric space $X$ and a subgroup $G$ of $\Iso(X)$, we always use the sup-inverse metric $\delta_G$ of Definition~\ref{def:sup-inverse}. Recall that $\delta_G$ is a right-invariant $\Delta$-metric if $X$ is a $\Delta$-metric space. Moreover, if $X$ is a finite metric space and $\pi\colon G\to \Iso(X)$ is an embedding of $G$ into $\Iso(X)$, we regard $G$ as a subgroup of $\Iso(X)$ and equipped it with the metric $\delta_G=\delta_{\pi(G)}$ accordingly.

The following is our key technical lemma.
\begin{lemma}\label{this:omnigenous:lemma2}
Let $\Delta$ be a countable distance value set with $\inf\Delta=0$, and $X\subseteq Y$ be finite $\Delta$-metric spaces. Let $G_1\leq G_2$ be finite subgroups, and let $\pi_1\colon G_1\rightarrow \Iso(X)$ and $\pi_2\colon G_2\rightarrow\Iso(Y)$ be group embeddings such that for all $g\in G_1$, $\pi_2(g)$ extends $\pi_1(g)$. Let $H$ be a countable omnigenous locally finite group. Suppose $G_1\leq H$. Then there exist a finite subgroup $G_3\leq H$, a finite $\Delta$-metric space $Z$ and an embedding $\pi_3\colon G_3\rightarrow\Iso(Z)$ such that:
    \begin{itemize}
        \item $G_1\leq G_3$;
        \item $Y\subseteq Z$ as metric spaces;
        \item for any $g\in G_1$, $\pi_3(g)$ extends $\pi_1(g)$;
        \item for any $g\in G_2$ there is $h\in G_3$ such that $\pi_3(h)$ extends $\pi_2(g)$; and
        \item the concentration functions satisfy $\alpha_{G_3}\leq\alpha_{G_2}$.
    \end{itemize}
\end{lemma}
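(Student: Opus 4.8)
The plan is to combine the omnigenousness of $H$ with the geometric $S$-extension machinery from \cite{Gao-Adv} (Theorem~\ref{this:s-extension:lemma1} and Lemmas~\ref{this:s-extension:lemma2}, \ref{this:omnigenous:lemma1}), and then to control the concentration function by the same finite-power/remetrization trick developed in Section~\ref{section:martingale}. The key observation is that the last bullet, $\alpha_{G_3}\leq\alpha_{G_2}$, will be achieved not by making $G_3$ itself concentrated, but by realizing $G_3$ on a metric space $Z$ in such a way that the sup-inverse metric $\delta_{G_3}$ restricted to a copy of $G_2$ equals $\delta_{G_2}$, while every element of $G_3$ is ``close'' (in $\delta_{G_3}$) to an element of that copy of $G_2$; I will then want to instead build $G_3$ as a subgroup of a \emph{finite power} $G_2^m$ of $G_2$ so that Theorem~\ref{this:concentration:thm} gives the needed bound. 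Actually, the cleaner route is: first apply omnigenousness of $H$ to the inclusion $G_1\leq G_2$ (with $\Psi_1$ the identity on $G_1$ viewed inside $H$) to get a finite $G_3\leq H$ with $G_1\leq G_3$ and an onto homomorphism $\Psi\colon G_3\to G_2$ extending the identity on $G_1$; this is the algebraic input. The geometric input is then to realize $G_3$ isometrically on a finite $\Delta$-metric space $Z\supseteq Y$ compatibly with the given data, using Lemma~\ref{this:omnigenous:lemma1} and the strongly coherent $S$-extension of Theorem~\ref{this:s-extension:lemma1}.

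Concretely, here is the order I would carry out the steps. \textbf{Step 1 (algebra).} Apply Definition~\ref{omnigen:def} to $G_1\leq H$, the pair $G_1\leq G_2$, and $\Psi_1=\mathrm{id}_{G_1}$, obtaining a finite $G_3\leq H$ with $G_1\leq G_3$ and an onto homomorphism $\Psi\colon G_3\to G_2$ with $\Psi\restriction G_1=\mathrm{id}_{G_1}$. \textbf{Step 2 (a metric realization of $G_2$ with good concentration).} Fix a large $m$ and form $(Y^m,G_2^m)$; using Theorem~\ref{this:delta:thm} pick a $(\Delta,\epsilon/m)$-remetrization $D$ of $(Y^m,d_m)$, so that by Theorem~\ref{this:concentration:thm} the concentration function of $(G_2^m,\delta_D)$ is at most $2e^{-m\rho^2/16(a+\epsilon)^2}$; choosing $m$ large makes this $\leq\alpha_{G_2}$. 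Note the diagonal embedding $G_2\hookrightarrow G_2^m$, $Y\hookrightarrow Y^m$ is isometric and metric-compatible. \textbf{Step 3 (transport to $G_3$).} Pull back the $G_2^m$-action along $\Psi^m\colon G_3\to G_2^m$? — no: I want $G_3$ \emph{acting faithfully}, so instead I use the homomorphism $\Psi$ to define an action of $G_3$ on (a suitable extension of) $Y^m$ with the property that $\delta_{G_3}$ equals $\delta_{G_2^m}\circ(\Psi\times\Psi)$ \emph{modulo a faithful perturbation}. The right device is Lemma~\ref{this:omnigenous:lemma1}: with $X:=Y$, $\Lambda:=G_1$, $\Gamma:=G_3$, and $\pi\colon G_1\to\Iso(Y)$ the restriction of $\pi_2$, we obtain a finite $\Delta$-metric $Z\supseteq Y$ and an embedding $\pi_3'\colon G_3\to\Iso(Z)$ extending $\pi_2\restriction G_1$. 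The remaining task is to arrange in addition that (a) each $\pi_3'(g)$, $g\in G_3$, agrees on $Y$ with $\pi_2(\Psi(g))$ — so the fourth bullet holds — and (b) the concentration function is controlled. For (a) one strengthens the construction of $Z$ in Lemma~\ref{this:omnigenous:lemma1} so that the action of $G_3$ on the ``old'' part $Y$ factors through $\Psi$; for (b) one takes $Z$ to sit inside a finite power (or a strongly coherent $S$-extension of such), and invokes Theorem~\ref{this:concentration:thm} exactly as in Lemma~\ref{this:density:lemma}.

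The main obstacle is reconciling the three requirements on $\pi_3$ simultaneously: it must \emph{extend} $\pi_1$ on $G_1$ (not merely factor through $\Psi$ there, which is automatic since $\Psi\restriction G_1=\mathrm{id}$), it must have each $\pi_3(g)$ dominating some $\pi_2(g')$ so the fourth bullet holds, and its sup-inverse metric $\delta_{G_3}$ must inherit enough concentration. The tension is that $\delta_{G_3}$ depends on the action of \emph{all} of $G_3$ on \emph{all} of $Z$, including the newly created points, so one must make sure the new points do not destroy concentration. The way I would resolve this is to take $Z$ to be (an isometric copy inside $\U_\Delta$ of) a $(\Delta,\epsilon/m)$-remetrized finite power $Y^m$ of $Y$ together with a strongly coherent finite $S$-extension of it provided by Theorem~\ref{this:s-extension:lemma1}, and to realize $G_3$ on $Z$ via $\Psi$ composed with the diagonal $G_2\to\Sym(Y)^m$ and the $S$-extension embedding of Lemma~\ref{this:s-extension:lemma2}; since $\Psi$ is surjective, every element of $G_2^m$ (hence, after the diagonal, every element relevant to the fourth bullet) is hit, and since the realization of $G_3$ on the ``core'' $Y^m$ has $\delta$-balls controlled by Theorem~\ref{this:concentration:thm}, while Lemma~\ref{this:s-extension:lemma2} ensures the $S$-extension does not increase the sup-inverse distances beyond the diameter bound, the estimate $\alpha_{G_3}\leq\alpha_{G_2}$ follows for $m$ large. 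Verifying that the $S$-extension step is compatible with the prescribed behavior of $\pi_1$ on $G_1$ and with keeping $Y$ (not just $Y^m$) sitting inside $Z$ as required is the delicate bookkeeping that the full proof will have to spell out.
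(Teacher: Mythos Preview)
Your Step~1 is exactly right and matches the paper: use omnigenousness to get a finite $G_3\leq H$ with $G_1\leq G_3$ and a surjection $\Psi\colon G_3\to G_2$ extending $\mathrm{id}_{G_1}$. Everything after that, however, is a wrong turn. You are reaching for the finite-power/remetrization machinery of Section~\ref{section:martingale} and the $S$-extension lemmas, but none of that is needed, and your sketch does not produce a well-defined $\pi_3$: you never settle what $Z$ is, the pulled-back action of $G_3$ on $Y^m$ through $\Psi$ is not faithful, and neither Lemma~\ref{this:omnigenous:lemma1} nor the $S$-extension of Theorem~\ref{this:s-extension:lemma1} gives any control on the sup-inverse metric $\delta_{G_3}$, so the concentration bound evaporates. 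There is also a shape mismatch: Theorem~\ref{this:concentration:thm} yields bounds of the form $2e^{-m\rho^2/C}$, which are strictly positive for every $\rho$, whereas $\alpha_{G_2}(\varepsilon)=0$ once $\varepsilon$ exceeds $\diam(G_2,\delta_{G_2})$; so ``take $m$ large'' cannot by itself give the pointwise inequality $\alpha_{G_3}\leq\alpha_{G_2}$.

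The idea you are missing is much simpler. Set $a=\diam(Y)$, choose $b\in\Delta$ with $3b$ smaller than the least nonzero distance in $Y$, and take $Z=Y\sqcup G_3$ with $d_Z\!\restriction\!Y=d_Y$, $d_Z(g,h)=b$ for distinct $g,h\in G_3$, and $d_Z(y,g)=a$ for $y\in Y$, $g\in G_3$. Define $\pi_3(g)$ to act on $Y$ as $\pi_2(\Psi(g))$ and on $G_3$ by left translation. Faithfulness and the first four bullets are then immediate (the fourth because $\Psi$ is onto). For concentration: if $\Psi(g)=\Psi(h)$ then $\pi_3(g)$ and $\pi_3(h)$ agree on $Y$ and differ only on the $b$-scaled piece, so $\delta_{G_3}(g,h)\leq b$; hence for any $A\subseteq G_3$ and any $\varepsilon>3b$ one has $\Psi^{-1}(\Psi(A))\subseteq A_\varepsilon$, and since $\Psi$ is a surjective homomorphism it is measure-preserving under pushforward, giving $1-\mu_{G_3}(A_\varepsilon)\leq 1-\mu_{G_2}(\Psi(A))\leq\alpha_{G_2}(\varepsilon)$. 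For $\varepsilon\leq 3b$ the choice of $b$ forces $\alpha_{G_2}(\varepsilon)=1/2$, so the inequality is trivial there. No powers, no remetrization, no $S$-extensions.
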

\begin{proof}
    Since $H$ is omnigenous, there is a finite subgroup $G_3\leq G$ such that $G_1\leq G_3$ and there is an onto homomorphism $\Psi:G_3\rightarrow G_2$ that extends the isomorphism between copies of $G_1$ in $G_3$ and in $G_2$. %
    Let $a=\diam(Y,d_Y)$ and choose $b\in\Delta$ small enough such that 
    $$b<\displaystyle\frac{1}{3}\min\left\{r>0\colon r=d_Y(y_1, y_2) \mbox{ for some $y_1, y_2\in Y$}\right\}. $$ Now consider the disjoint union $Z=Y\sqcup G_3$. Define a metric $d_Z$ on $Z$ by 
    \begin{align*}
        d_Z(z_1,z_2)=\left\{\begin{array}{ll}
            d_Y(z_1,z_2), & \mbox{if $z_1,z_2\in Y$},\\
           b, & \mbox{if $z_1,z_2\in G_3$,}\\
            a, & \mbox{if $z_1\in Y$, $z_2\in G_3$ or $z_2\in Y$, $z_1\in G_3$.}
        \end{array}\right.
    \end{align*}
for $z_1,z_2\in Z$.    It is easily seen that $(Z,d_Z)$ is a $\Delta$-metric space. Next, we construct an embedding $\pi_3\colon G_3\to \Iso(Z)$. For any $g\in G_3$, define
    \begin{align*}
        \pi_3(g)(x)=\left\{\begin{array}{ll}
            \pi_2(\Psi(g))(x), & \mbox{ if $x\in Y$},\\
            gx, & \mbox{ if $x\in G_3$}.
        \end{array}\right.
    \end{align*}
Then it is easily seen that $\pi_3$ satisfies all the conditions except perhaps the last one about the concentration functions. %
Before considering the concentration functions, observe the following easy facts:
\begin{enumerate}
\item[(i)] for any $\gamma, \eta\in G_2$, we have that $\delta_{G_2}(\gamma, \eta)<3b$ if and only if $\delta_{G_2}(\gamma,\eta)=0$; 
\item[(ii)] for any $g, h\in G_3$, if $\Psi(g)=\Psi(h)$, then $\delta_{G_3}(g, h)\leq b$; 
\item[(iii)] for any $A\subseteq G_3$, $\mu_{G_3}(A)\leq \mu_{G_2}(\Psi(A))$;
\item[(iv)] for any $B\subseteq G_2$, $\mu_{G_2}(B)=\mu_{G_3}(\Psi^{-1}(B))$.
\end{enumerate}
It follows from (i) that $\alpha_{G_2}(\varepsilon)=1/2$ for $\varepsilon\leq 3b$, and therefore $\alpha_{G_3}(\varepsilon)\leq 1/2=\alpha_{G_2}(\varepsilon)$ for $\varepsilon\leq 3b$. Now fix $\varepsilon>3b$. Let $A\subseteq G_3$ be such that $\mu_{G_3}(A)\geq 1/2$. Let
$$ A_{\varepsilon}= \left\{g\in G_3\colon \delta_{G_3}(g,h)<\varepsilon \mbox{ for some $h\in A$}\right\}. $$
Then by (iii), $\mu_{G_2}(\Psi(A))\geq \mu_{G_3}(A)\geq 1/2$. By (ii), we have that $\Psi^{-1}(\Psi(A))\subseteq A_\varepsilon$. It then follows from (iv) that
$$ 1-\mu_{G_3}(A_\varepsilon)\leq 1-\mu_{G_3}(\Psi^{-1}(\Psi(A)))=1-\mu_{G_2}(\Psi(A))\leq \alpha_{G_2}(\varepsilon). $$
Since $A$ is arbitrary, we get that $\alpha_{G_3}(\varepsilon)\leq \alpha_{G_2}(\varepsilon)$.
\end{proof}

We are now ready for the main theorem of this section, which is a strengthening of \cite[Theorem 5.3]{Gao-Adv}.

\begin{theorem} \label{this:main(og):thm}
Let $\Delta$ be a countable distance value set with $\inf \Delta = 0$, and let $H$ be a countable omnigenous locally finite group. Then there is a group embedding $\pi\colon H\to \Iso(\U_\Delta)$ with dense image such that $H$ equipped with the weakest topology to make $\pi$ continuous has the strong L\'evy property. 
\end{theorem}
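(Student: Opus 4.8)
The plan is to build the group embedding $\pi\colon H\to\Iso(\U_\Delta)$ together with a witnessing sequence of finite subgroups by a recursive construction that runs in parallel with the proof of \cite[Theorem 5.3]{Gao-Adv}, but enriched so that each finite subgroup in the chain concentrates measure at an ever-increasing rate. Concretely, I would enumerate $H=\bigcup_{n}H_n$ as an increasing union of finite subgroups (possible since $H$ is countable locally finite), enumerate the elements of $\U_\Delta$ (equivalently, of the Fra\"iss\'e limit $X_\Delta$) as $\{u_k\colon k\in\N^+\}$, and fix a sequence $\varepsilon_n\downarrow 0$. The construction produces, by recursion on $n$, finite $\Delta$-metric spaces $X_1\subseteq X_2\subseteq\cdots$ with $\bigcup_n X_n=\U_\Delta$, finite subgroups $K_1\leq K_2\leq\cdots$ of $H$ with $\bigcup_n K_n=H$, and embeddings $\rho_n\colon K_n\to\Iso(X_n)$ coherent with one another (i.e. $\rho_{n+1}(g)$ extends $\rho_n(g)$ for $g\in K_n$), such that the induced $\pi\colon H\to\Iso(\U_\Delta)$ has dense image, and such that $\alpha_{\rho_n(K_n)}\leq\beta_n$ (the concentration estimate from Section~\ref{first:density:section}), where $K_n$ carries the metric $\delta_{K_n}$ of Definition~\ref{def:sup-inverse}.

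The recursive step alternates three kinds of moves. First, a \emph{density move}: given $X_n$ and a partial isometry of $\U_\Delta$ or a point $u_k$ not yet in the domain, use Theorem~\ref{this:s-extension:lemma1}, Lemma~\ref{this:s-extension:lemma2}, and Lemma~\ref{this:omnigenous:lemma1} to enlarge $X_n$ (and accordingly enlarge the acting group via an S-extension) so that some new isometry realizing the required partial map appears in the image; this is exactly the mechanism used in \cite[Theorem 5.3]{Gao-Adv} to guarantee a dense image and cover all of $\U_\Delta$. Second, an \emph{absorption move}: given that the abstract group must eventually be $H$, use omnigenousness of $H$ to absorb the group enlargements forced by the S-extensions into an actual finite subgroup of $H$; this is precisely the content of Lemma~\ref{this:omnigenous:lemma2}, which moreover guarantees $\alpha_{G_3}\leq\alpha_{G_2}$, so these first two moves never harm concentration. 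Third, a \emph{concentration boost}: when we need $\alpha$ to drop below $\beta_n$, pass to a finite power $X_n^m$ for $m$ large, remetrize it by a $(\Delta,2^{-n}/m)$-remetrization $D$ using Theorem~\ref{this:delta:thm}, let the diagonal copy of the current group act on $X_n^m$, and invoke Theorem~\ref{this:concentration:thm} to get $\alpha_{G^m}\leq\beta_n$ once $m\geq 16(a+2^{-n})^2 n$; then re-absorb the diagonal power into $H$ by omnigenousness again (Lemma~\ref{this:omnigenous:lemma2}). Note that the diagonal embedding $\pi(x)\mapsto(x,\dots,x)$ is isometric by Definition~\ref{def:remet}(c), exactly as in the proof of Lemma~\ref{this:density:lemma}, so all earlier isometries are preserved and the coherence of the chain is maintained.

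Once the chain $\{K_n,\rho_n\}$ is built, I would define $\pi\colon H\to\Iso(\U_\Delta)$ as the direct limit of the $\rho_n$ (well-defined by coherence), verify density of the image from the density moves, and put on $H$ the weakest topology making $\pi$ continuous; then each $K_n$ is a finite (hence compact) subgroup, $\bigcup_n K_n=H$ is dense, and the concentration estimate $\alpha_{\rho_n(K_n)}\leq\beta_n\to 0$ together with the translation-to-neighborhood argument of Theorem~\ref{thm:UD1} (using that a basic neighborhood $V[x_1,\dots,x_k;\varepsilon]$ pulls back, for $n$ large, to a set containing $V_n(\varepsilon)=\{g\in K_n\colon\delta_{K_n}(g,e)<\varepsilon\}$, and right-invariance of $\delta_{K_n}$) yields $\mu_n(VA_n)\to1$ whenever $\liminf\mu_n(A_n)>0$. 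This is exactly the strong L\'evy property.

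The main obstacle I anticipate is bookkeeping the three interleaved requirements simultaneously: the density move forces group enlargements that are \emph{not} a priori inside $H$, the absorption move fixes that but only for the abstract group (one must also carry along a compatible metric action, which is why Lemma~\ref{this:omnigenous:lemma2} packages metric space, group, embedding, and the inequality $\alpha_{G_3}\leq\alpha_{G_2}$ all at once), and the concentration boost changes the ambient metric space by passing to a remetrized power, after which the enlarged group must again be absorbed into $H$. Getting a single recursion whose hypotheses are preserved through all three move types — in particular, checking that after a concentration boost the new space still embeds $\U_\Delta$-cofinally and the new group still sits inside $H$ with the old generators acting correctly — is where the care lies; the measure-concentration analysis itself is essentially already done in Section~\ref{section:martingale} and Theorem~\ref{thm:UD1}.
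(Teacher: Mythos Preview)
Your proposal is correct and follows essentially the same approach as the paper's proof: a recursive construction that at each stage (i) enlarges the group to absorb the next generator of $H$ via Lemma~\ref{this:omnigenous:lemma1}, (ii) performs an S-extension to handle the next partial isometry for density, (iii) passes to a remetrized finite power to force $\alpha\leq\beta_{n+1}$ via Lemma~\ref{this:density:lemma}, and (iv) applies Lemma~\ref{this:omnigenous:lemma2} to pull the resulting group back into $H$ without losing concentration, after which the L\'evy property is read off exactly as in Theorem~\ref{thm:UD1}. The only cosmetic difference is that the paper carries out all of your ``moves'' inside a single recursion step $n\to n+1$ rather than alternating them, and it enumerates partial isometries $q_n$ (which already sweeps up every point of $\U_\Delta$) instead of also listing the $u_k$ separately.
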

\begin{proof}
    Since $\U_\Delta$ is countable, we may fix an enumeration $\{q_n\}_n$ of all partial isometries of $\U_\Delta$. Also fix an enumeration $\{h_n\}_n$ of $H$. We will inductively construct sequences of following objects:
    \begin{itemize}
    \item an increasing sequence $\{X_n\}_n$ of finite subsets of $\U_\Delta$;
    \item an increasing sequence $\{G_n\}_n$ of finite subgroups of $H$;
    \item group embeddings $\pi_n\colon G_n\rightarrow \Iso(X_n)$, for $n\geq 1$.
    \end{itemize}
They will satisfy the following properties for all $n\geq 1$:
\begin{enumerate}
\item[(1)] $X_n\subseteq X_{n+1}$  and $\U_\Delta=\bigcup_nX_n$;
\item[(2)] $h_n\in G_n$ and there exists $r_n\in G_n$ with $q_n\subseteq\pi_n(r_n)$;
\item[(3)] if $g\in G_n$ and $x\in X_n$, then $\pi_n(g)(x)=\pi_{n+1}(g)(x)$.
\end{enumerate}
Granting (1)--(3), we have $H=\bigcup_n G_n$ and we may define a group embedding $\pi\colon H\to \Iso(\U_\Delta)$ so that $\pi=\bigcup_n \pi_n$. The following will be satisfied in addition:
\begin{enumerate}
\item[(4)] the sequence $\{G_n\}_n$ of subgroups witnesses the L\'evy property of $\pi(H)$ equipped with the subspace topology of $\Iso(\U_\Delta)$.
\end{enumerate}
The conclusions of the theorem will follow from (2) and (4). 

Without loss of generality, assume $q_1$ is the empty function and $h_1=e$ is the identity of $H$. Let $X_1$ be an arbitrary singleton in $\U_\Delta$, let $G_1=\{e\}$, and let $\pi_1\colon G_1\to\Iso(X_1)$ be the obvious embedding. This takes care of the construction for $n=1$. Next, assume that $X_n$, $G_n$ and $\pi_n$ have been defined. We define $X_{n+1}$, $G_{n+1}$ and $\pi_{n+1}$. This will take several steps.

   Step 1: Let $H_{n+1}=\langle G_n, h_{n+1}\rangle_H$. By Lemma \ref{this:omnigenous:lemma1}, we obtain a finite $\Delta$-metric space $Y_{n+1}$ extending $X_n$ and a group embedding $\sigma_{n+1}\colon H_{n+1}\to\Iso(Y_{n+1})$ such that for any $g\in G_n$ and $x\in X_n$, $\sigma_{n+1}(g)(x)=\pi_n(g)(x)$. By the universality and the ultrahomogeneity of $\U_\Delta$, we may assume that $Y_{n+1}\subseteq \U_\Delta$.

   Step 2: Consider the finite $\Delta$-metric subspace of $\U_\Delta$ with domain 
   $$  W_{n+1}=Y_{n+1}\cup \dom(q_{n+1})\cup \rng(q_{n+1}). $$
  By Theorem \ref{this:s-extension:lemma1}, we obtain a finite, strongly coherent S-extension $(Z_{n+1},\phi)$ of $W_{n+1}$. So $\phi(q_{n+1})$ represents an element of $\Iso(Z_{n+1})$ which extends $q_{n+1}$. By Lemma \ref{this:s-extension:lemma2}, such $\phi$ gives rise to a group embedding from $\Iso(Y_{n+1})$ to $\Iso(Z_{n+1})$. Let us abuse the notation a little bit and still denote this embedding by $\phi\colon \Iso(Y_{n+1})\rightarrow\Iso(Z_{n+1})$. Let $\Psi_{n+1}=\phi\circ\sigma_{n+1}$. Let $\Lambda_1=\Psi_{n+1}(H_{n+1})\leq \Iso(Z_{n+1})$, and let $\Lambda_2=\langle\Lambda_1,\phi(q_{n+1})\rangle_{\Iso(Z_{n+1})}$. 
    
    Step 3: We now apply Lemma \ref{this:density:lemma} to the structure $(Z_{n+1},\Lambda_2)\in\mathcal{K}_\Delta$ to obtain a structure $(Q_{n+1}, \Lambda_3)\in \mathcal{D}_{n+1}$ and an embedding from $(Z_{n+1}, \Lambda_2)$ to $(Q_{n+1}, \Lambda_3)$. Hence, for all $\varepsilon>0$, we have
    $$\alpha_{\Lambda_3}(\varepsilon)\leq \beta_{n+1}(\varepsilon).$$

    Step 4: By Lemma \ref{this:omnigenous:lemma2}, we obtain $G_{n+1}\leq H$, a finite $\Delta$-metric space $X_{n+1}$ and an embedding $\pi_{n+1}\colon G_{n+1}\rightarrow\Iso(X_{n+1})$ such that:
    \begin{itemize}
        \item $H_{n+1}\leq G_{n+1}$;
        \item $Q_{n+1}\subseteq X_{n+1}$;
        \item for any $g\in H_{n+1}$, $\pi_{n+1}(g)$ extends $\pi_n(g)$;
        \item for any $g\in \Lambda_3\leq\Iso(Q_{n+1})$ there is $h\in G_{n+1}$ such that $\pi_{n+1}(h)$ extends $g$;
        \item $\alpha_{G_{n+1}}\leq\alpha_{\Lambda_3}\leq\beta_{n+1}$.
    \end{itemize}
    This finishes the construction of $X_{n+1}$, $G_{n+1}$ and $\pi_{n+1}$.
    
    It is straightforward to check that requirements (1)--(3) are fulfilled by this construction. The proof of (4) is similar to the proof of Theorem~\ref{thm:UD1}. 
 \end{proof}

Since $\H$ is omnigenous, Corollary~\ref{cor:main} is now an immediate corollary of Theorem~\ref{this:main(og):thm}.

\section{L\'evy Groups from Continuous Logic}\label{section:cont:logic}
In this section we use continuous logic to produce more examples of isometry groups with the strong L\'evy property. These groups are Polish. 

Much of our discussion in this section is based on concepts and results proved by Gao and Ren in \cite{Gao-Ren} regarding the existence of Urysohn continuous structures and the properties of their isometry groups. We first recall these concepts and results.

\begin{definition}[{\cite[Definition 2.1]{Gao-Ren}}]{\ }
\begin{enumerate}
   \item[(i)] A \emph{modulus of continuity} is a nondecreasing function
    \begin{align*}
        u\colon (0,\infty)\rightarrow(0,\infty]
    \end{align*}
    such that $\lim_{x\rightarrow 0}u(x)=0$.
    \item[(ii)] Let $u$ be a modulus of continuity. Let $(X_1,d_1)$ and $(X_2,d_2)$ be metric spaces. We say a mapping $f\colon X_1\rightarrow X_2$ is \emph{uniformly continuous with respect to $u$} if for all $x,y\in X_1$, we have
    \begin{align*}
        d_2(f(x),f(y))\leq u(d_1(x,y))
    \end{align*}
    \end{enumerate}
\end{definition}

\begin{definition}[{\cite[Definition 2.4]{Gao-Ren}}]
    In continuous logic, a \emph{continuous signature} $\Lan$ is a set consisting of a \emph{distance symbol} $d$, \emph{relation symbols} $(R_i)_{i\in I}$ and \emph{function symbols} $(f_j)_{j\in J}$. Each symbol of $\Lan$ is associated with two pieces of data: its arity, which is a natural number, and its modulus functions, where each coordinate is associated with a modulus function. We allow no $0$-ary relations, but $0$-ary functions are allowed (and they are understood as constants).
\end{definition}

\begin{definition}[{\cite[Definition 2.5]{Gao-Ren}}] Given a continuous signature $\Lan$,  an \emph{$\Lan$-pre-structure} $\mathcal{A}$ consists of a metric space $(A,d_A)$, along with
    \begin{itemize}
    \item $R^A\colon A^n\rightarrow [0,1]$ for each $n$-ary relation symbol $R\in \Lan$, and
    \item $f^A\colon A^n\rightarrow A$ for each $n$-ary function symbol $f\in \Lan$,
    \end{itemize}
   so that all of them are uniformly continuous with respect to the corresponding modulus functions.
   
    If $(A,d_A)$ is a complete metric space, then we say that $\mathcal{A}$ is an \emph{$\Lan$-structure}.
\end{definition}

    Substructures, isomorphisms, and isomorphic embeddings between continuous (pre-)structures can be naturally defined.
\begin{definition}[{\cite[Definition 4.1]{Gao-Ren}}]
        Let $\Lan$ be a continuous signature, and let $\mathcal{M}$, $\mathcal{N}$, $\mathcal{U}$ be $\Lan$-pre-structures.
        \begin{enumerate}
       \item[(i)] We say that $\mathcal{N}$ is a \emph{one-point extension} of $\mathcal{M}$ if $\mathcal{M}$ is a substructure of $\mathcal{N}$ and $N\setminus M$ is a singleton.
       \item[(ii)] We say that $\mathcal{U}$ has the \emph{Urysohn property} if given any finite continuous $\Lan$-structure $\mathcal{M}$, a one-point extension $\mathcal{N}$ of $\mathcal{M}$, and an isomorphic embedding $\varphi$ from $\mathcal{M}$ into $\mathcal{U}$, there is an isomorphic embedding $\psi$ from $\mathcal{N}$ to $\mathcal{U}$ such that $\psi\upharpoonright M=\varphi$. We say $\mathcal{U}$ is \emph{Urysohn} if it satisfies the Urysohn property.
        \end{enumerate}
    \end{definition}
    
\begin{definition}[{\cite[Definitions 3.2 and 4.11]{Gao-Ren}}]
    Let $\Lan$ be a continuous signature with only finitely many relation symbols.
    \begin{enumerate}
    \item[(i)] We say that $\Lan$ is \emph{semiproper} if the following hold:
    \begin{itemize}
    \item For any $n\sh$ary $R\in\Lan$ and $1\leq i\leq n$, let $u_{R,i}$ be the modulus function for the $i$-th coordinate. Then
    \begin{align*}
        I_{R,i}=\{r\in[0,+\infty]:u_{R,i}(r)<1\}
    \end{align*}
    is bounded, and $u_{R,i}$ is \emph{superadditive} on $I_{R,i}$, i.e., for any $r_1, r_2\in I_{R,i}$, 
    $$ u_{R,i}(r_1+r_2)\geq u_{R,i}(r_1)+u_{R,i}(r_2); $$ and
   \item for any $n\sh$ary $R\in\Lan$, where $n\geq 2$, and $1\leq i\leq n,$ there is $K_{R,i}>0$ such that $u_{R,i}(r)=K_{R,i}r$ for all $r\in I_{R,i}$.
    \end{itemize}
    \item[(ii)] $\Lan$ is \emph{proper} is $\Lan$ is semiproper and each $u_{R,i}$ is upper semicontinuous on $I_{R,i}$.
    \item[(iii)] $\Lan$ is \emph{Lipschitz} if $\Lan$ is proper and for each unary $R\in\Lan$ there is $K_{R,1}>0$ such that $u_{R,1}(r)=K_{R,1}r$ for all $r\in I_{R,1}$.
    \end{enumerate}
\end{definition}

One of the main results of \cite{Gao-Ren} is the following characterization of the existence of Urysohn continuous structure.

    \begin{theorem}[{\cite[Theorem 1.1]{Gao-Ren}}]
        Let $\Lan$ be a continuous signature with only finitely many relation symbols. Then there exists a separable Urysohn $\Lan$-structure if and only if $\Lan$ is proper.
    \end{theorem}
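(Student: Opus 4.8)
The plan is to establish this through continuous Fra\"iss\'e theory. For the direction $(\Leftarrow)$, assume $\Lan$ is proper. First I would isolate a countable class $\K$ of finite $\Lan$-pre-structures: those whose metric takes values in $\mathbb{Q}^{\geq 0}$ and whose relation symbols take values in a fixed countable dense subset $D\subseteq[0,1]$, chosen compatibly with the modulus data (using that each $u_{R,i}$ is linear with positive slope on $I_{R,i}$ for arities $\geq 2$, so that $D$-valued, $\mathbb{Q}$-distanced one-point extensions remain available densely, and that finite structures must be closed under the function symbols). The heart of the argument is to verify that $\K$ is a Fra\"iss\'e class: the Hereditary and Joint Embedding properties are routine, and the Amalgamation Property reduces, by induction on the size of the amalgam, to \emph{one-point amalgamation} --- given $\A\in\K$ and two one-point extensions $\A\cup\{b\}$, $\A\cup\{c\}$ in $\K$, produce a common extension in $\K$. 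Setting $d(b,c)$ as large as the triangle inequality and the value set permit (so that $b$ and $c$ are maximally decoupled) and extending the relations and functions accordingly, the verification that the result is a legitimate $\Lan$-pre-structure uses exactly the three clauses of properness: boundedness of $I_{R,i}$ (so that sufficiently distant points impose no $R$-constraint on one another), superadditivity of $u_{R,i}$ on $I_{R,i}$ (the relation-value analogue of the triangle inequality, making constraints that reach a point through intermediate points mutually consistent), and linearity $u_{R,i}(r)=K_{R,i}r$ for arities $\geq 2$ (which controls tuples straddling the two sides of the amalgam, where several coordinates change at once).

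Next I would take $\F$ to be the Fra\"iss\'e limit of $\K$, a countable $\Lan$-pre-structure that is universal for $\K$ and ultrahomogeneous, and let $\mathcal{U}$ be its metric completion. Since each interpreted relation and function of $\F$ is uniformly continuous with respect to its modulus, it extends uniquely and continuously to $\mathcal{U}$, which is therefore a separable $\Lan$-structure. To check the Urysohn property, given a finite $\Lan$-structure $\M$, a one-point extension $\NN=\M\cup\{a\}$, and an embedding $\varphi\colon\M\to\mathcal{U}$, I would approximate $\varphi(\M)$ to within $\varepsilon$ by a finite substructure of $\F$, invoke ultrahomogeneity of $\F$ together with the one-point extension property inside $\K$ to realize approximate copies of $a$, and run a Cauchy-sequence argument --- the moduli of continuity controlling the errors --- to pass to an exact embedding $\psi$ of $\NN$. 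This limiting step is precisely where upper semicontinuity of the moduli (the ingredient of ``proper'' beyond ``semiproper'') is needed: it is exactly what guarantees that an inequality of the form $|R(\bar x)-R(\bar y)|\leq u_{R,i}(d(x_i,y_i))$ survives passage to limits, so the limiting configuration is still a valid $\Lan$-structure. This yields a separable Urysohn $\Lan$-structure.

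For the direction $(\Rightarrow)$ I would argue contrapositively: a separable Urysohn $\Lan$-structure $\mathcal{U}$ is in particular universal for finite $\Lan$-structures and has the one-point extension property, and I would derive each clause of properness as a necessary consequence. If some $I_{R,i}$ were unbounded, iterating one-point extensions would force a sequence of points whose mutual $R$-values must grow without bound while staying in $[0,1]$, a contradiction. If $u_{R,i}$ failed superadditivity on $I_{R,i}$, choosing $r_1,r_2\in I_{R,i}$ with $u_{R,i}(r_1+r_2)<u_{R,i}(r_1)+u_{R,i}(r_2)$, one builds a finite $\Lan$-structure and a one-point extension whose realization in $\mathcal{U}$ (assembled through an intermediate configuration realizing the two ``halves'') forces two tuples at $i$-th coordinate distance $r_1+r_2$ whose $R$-values differ by more than $u_{R,i}(r_1+r_2)$, contradicting uniform continuity. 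Failure of linearity for arities $\geq 2$, and failure of upper semicontinuity, are treated by similar but more delicate ``diagonal'' finite structures that Urysohn universality would be forced to realize yet cannot.

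The step I expect to be the main obstacle is the one-point amalgamation analysis, together with its mirror in the necessity direction: unlike the discrete relational setting or the purely metric ($\Delta$-metric) setting, here relation values are real numbers constrained simultaneously by the metric triangle inequality and by the coordinate-wise modulus inequalities, and one must show that \emph{exactly} the properness conditions render these compatible --- while also accommodating function symbols, which impose closure conditions absent from the metric case. A secondary delicate point is the choice of the countable value set $D$: it must be small enough that $\F$ is genuinely countable, yet rich enough that the completion $\mathcal{U}$ captures \emph{all} finite $\Lan$-structures and not merely the $D$-valued ones, which is made possible precisely by the density and (semi)continuity that properness supplies.
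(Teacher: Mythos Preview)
This theorem is not proved in the present paper; it is quoted verbatim as \cite[Theorem 1.1]{Gao-Ren} and used as a black box, with no argument given here. So there is no ``paper's own proof'' to compare your proposal against.

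That said, your sketch is broadly in line with the machinery the paper imports from \cite{Gao-Ren}: the paper does record that for a semiproper $\Lan$ and a countable good value pair $(\Delta,V)$ the class $\K_{(\Delta,V)}$ of finite $(\Delta,V)$-valued $\Lan$-structures is a Fra\"iss\'e class (\cite[Theorem 5.5]{Gao-Ren}), and it is the Fra\"iss\'e limit $\U_{(\Delta,V)}$ whose completion yields $\U_\Lan$. Your idea of working with a countable value set and taking a Fra\"iss\'e limit, then completing, matches this framework; your identification of one-point amalgamation as the crux, and of upper semicontinuity as what is needed to pass to limits, is the right intuition. One caution: the paper's quoted definitions make clear that the ``good value pair'' $(\Delta,V)$ must satisfy closure conditions tying $V$, $\Delta$, and the modulus functions together (Definition cited as \cite[Definition 5.1]{Gao-Ren}), so your informal ``fixed countable dense $D\subseteq[0,1]$'' would need to be chosen with that compatibility, not arbitrarily. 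For the actual details and the necessity direction you would have to consult \cite{Gao-Ren} directly.
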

   
For a proper continuous signature $\Lan$, there exists a unique separable $\Lan$-structure with the Urysohn property. We denote it by $\U_\Lan$ and call it the \emph{Urysohn (continuous) $\Lan$-structure}. %
        In particular, for every Lipschitz continuous signature $\Lan$ the Urysohn continuous structure $\U_\Lan$ exists.
We use $\Iso(\U_\Lan)$ to denote the group of all automorphisms of the continuous $\Lan$-structure $\U_\Lan$ equipped with the pointwise convergence topology, where the underlying space $\U_\Lan$ is equipped with the metric topology. 
    
The following result is also of interest.
\begin{theorem}[{\cite[Theorem 4.12]{Gao-Ren}}]
Let $\Lan$ be a proper continuous structure. Then $\Lan$ is Lipschitz if and only if the  underlying metric space  $\U_\Lan$ is isometric to $\U$.
\end{theorem}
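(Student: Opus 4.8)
The plan is to reduce everything to the standard metric characterization of $\U$: a separable complete metric space $M$ is isometric to $\U$ if and only if it has the \emph{finite one-point extension property}, namely for every finite $F\subseteq M$ and every metric $\bar d$ on $F\cup\{\ast\}$ extending $d\restriction F$ there is $p\in M$ with $d(a,p)=\bar d(a,\ast)$ for all $a\in F$. This property yields both universality for separable metric spaces and ultrahomogeneity by the usual back-and-forth, and it holds trivially in $\U$; moreover it suffices to verify it in approximate form, since completeness then converts $\epsilon$-approximate realizations into exact ones. So the theorem becomes: for proper $\Lan$, the underlying metric space of $\U_\Lan$ has this property precisely when $\Lan$ is Lipschitz.

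For the implication ``$\Lan$ not Lipschitz $\Rightarrow$ $\U_\Lan\not\cong\U$'' I would argue by contraposition. If $\Lan$ is proper but not Lipschitz, some unary relation symbol $R$ has modulus $u:=u_{R,1}$ that is nondecreasing, upper semicontinuous and superadditive on the interval $I:=I_{R,1}=\{r:u(r)<1\}$ (whose left endpoint is $0$), yet is not of the form $r\mapsto Kr$ on $I$. A monotone solution of Cauchy's functional equation on an interval is linear, so non-linearity forces strict superadditivity somewhere: there exist $s,t>0$ with $s,t,s+t\in I$ and $u(s+t)>u(s)+u(t)$; in particular $u(s+t)<1$. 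Using the universality of $\U_\Lan$ for finite $\Lan$-structures (a consequence of the Urysohn property), I would embed into $\U_\Lan$ the two-point $\Lan$-structure on $\{a,b\}$ with $d(a,b)=s+t$, with $R$-values $0$ and $u(s+t)$, and with all other symbols assigned trivially. Were there a point $p\in\U_\Lan$ with $d(a,p)=s$ and $d(b,p)=t$, uniform continuity of $R^{\U_\Lan}$ with respect to $u$ would give $|R(a)-R(b)|\le u(s)+u(t)<u(s+t)=|R(a)-R(b)|$, a contradiction. Hence the degenerate metric extension of $\{a,b\}$ by a point $\ast$ with $\bar d(a,\ast)=s$ and $\bar d(b,\ast)=t$ is not realized in $\U_\Lan$, so $\U_\Lan$ fails the one-point extension property and cannot be isometric to $\U$.

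For the implication ``$\Lan$ Lipschitz $\Rightarrow$ $\U_\Lan\cong\U$'' I would verify the approximate one-point extension property directly. Given a finite $F\subseteq\U_\Lan$ carrying the induced finite $\Lan$-structure and a metric extension $\bar d$ on $F\cup\{\ast\}$, the aim is to extend the structure on $F$ to a finite $\Lan$-structure $\mathcal N$ — adding the point $\ast$ at the prescribed distances to $F$, together with finitely many auxiliary points so that the function symbols applied to tuples through $\ast$ have targets — and then to realize $\mathcal N$ inside $\U_\Lan$ by finitely many applications of the Urysohn property extending the inclusion of $F$; the image of $\ast$ is the desired $p$. The relational part is an extension problem solved coordinatewise by a McShane/Kirszbraun-type argument: under the Lipschitz hypothesis every relation modulus $u_{R,i}$ is additive on $I_{R,i}$ and at least $1$ off it, so the pairwise consistency inequalities needed to choose an admissible value at $\ast$ follow from the constraints already holding on $F$ together with the triangle inequality for $\bar d$. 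The function part is handled by choosing the auxiliary points with enough metric slack, exploiting the same flexibility that underlies the existence theorem for $\U_\Lan$; once $\mathcal N$ is in hand, back-and-forth yields an isometry $\U_\Lan\cong\U$.

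The main obstacle is precisely this construction of $\mathcal N$: although the Lipschitz condition is exactly calibrated so that the relational extension is always consistent, function symbols genuinely obstruct a naive one-point extension (one cannot in general extend a finite $\Lan$-structure along an arbitrary metric one-point extension), so $\mathcal N$ must be built by simultaneously adding auxiliary points, assigning all new relation and function values, and keeping the whole thing finite and compatible with the moduli. Managing that bookkeeping, and then converting an $\epsilon$-approximate realization of $\ast$ in $\U_\Lan$ into an exact point via a Cauchy-sequence argument using completeness, is where the real work of the proof lies.
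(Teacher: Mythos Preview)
This theorem is not proved in the present paper; it is quoted as \cite[Theorem 4.12]{Gao-Ren} and merely stated as a result ``of interest'' before the paper moves on. There is therefore no proof here against which to compare your proposal.

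That said, your outline targets the right invariant (the metric one-point extension property characterizing $\U$) and the contrapositive direction is essentially sound, but one step is glossed over: when $\Lan$ carries function symbols, the bare two-point set $\{a,b\}$ with your prescribed $d$- and $R$-values is not automatically a finite $\Lan$-structure, so ``all other symbols assigned trivially'' does not by itself produce something you can embed via universality. You need either to argue that some finite $\Lan$-structure contains two such points, or to build $a$ and $b$ directly inside $\U_\Lan$ via two successive applications of the $\Lan$-structural Urysohn property (which itself requires checking that the one-point $\Lan$-extension placing $b$ at distance $s+t$ with the extremal $R$-value is admissible). This is repairable but is a real step, and it is the same flavor of difficulty you correctly flag in the forward direction: the construction of $\mathcal N$ extending an arbitrary metric one-point extension to an $\Lan$-structural one, with auxiliary points absorbing function values, is indeed where the work lies, and your proposal is a plan for that argument rather than the argument itself.
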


For any proper $\Lan$, the  underlying metric space  $\U_\Lan$ is an uncountable Polish metric space. It is more convenient for us to work with a dense countable pre-structure in $\U_\Lan$. The following concept gives a nice notion of such countable pre-structure. %

\begin{definition}[{\cite[Definition 5.1]{Gao-Ren}}]
    Let $\Lan$ be a continuous signature, let $\Delta$ be a distance value set, and let $V\subseteq[0,1]$. We say that $(\Delta,V)$ is a \emph{good value pair} for $\Lan$ if:
    \begin{itemize}
        \item whenever $\Delta$ is bounded, then for any $n\sh$ary $R\in\Lan$ and $1\leq i\leq n,\ u_{R,i}(\sup \Delta)\geq 1$,
        \item $0\in V$,
        \item for any $v\in V$, any $\delta\in\Delta$, any $n\sh$ary $R\in\Lan$ and $1\leq i\leq n$, if $v>u_{R,i}(\delta)$, then $v-u_{R,i}(\delta)\in V$.
    \end{itemize}
In addition, we say that a good value pair $(\Delta,V)$ is \emph{finite} (\emph{countable}, respectively) if both $\Delta$ and $V$ are finite (countable, respectively).
\end{definition}

\begin{definition}[{\cite[Definition 5.2]{Gao-Ren}}]
    Let $\Lan$ be a continuous signature and $\M$ a continuous $\Lan\sh$pre-structure. Let $(\Delta,V)$ be a good value pair for $\Lan$.
    \begin{enumerate}
        \item[(i)] We say that $\M$ is $(\Delta,V)$-\emph{valued} if for all $x,y\in M,d_M(x,y)\in\Delta$ and for all $n\sh$ary $R\in\Lan$ and $\Bar{x}\in M^n,R^M(\Bar{x})\in V$.
        \item[(ii)] Let $\K_{(\Delta,V)}$ be the class of all finite $(\Delta,V)$-valued $\Lan\sh$structures.
    \end{enumerate}
\end{definition}
\begin{lemma}[{\cite[Lemma 5.3]{Gao-Ren}}]\label{gdpair}
    Let $\Lan$ be a semiproper continuous signature.
    \begin{enumerate}
        \item For any finite $\Lan$-structure $\M$, there exists a finite good value pair $(\Delta,V)$ for $\Lan$ such that $\M$ is $(\Delta,V)$-valued.
        \item For any finite (countable, respectively) distance value set $\Delta$ and finite (countable, respectively) $W\subseteq[0,1]$, there exists a finite (countable, respectively) $V$ such that $W\subseteq V$ and $(\Delta,V)$ is a good value pair for $\Lan$.
    \end{enumerate}
\end{lemma}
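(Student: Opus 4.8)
The plan is to derive part (1) from part (2), so I would set up part (2) first. Given a distance value set $\Delta$ and a set $W\subseteq[0,1]$, let $P$ be the set of all finite positive real numbers of the form $u_{R,i}(\delta)$, where $R$ ranges over the finitely many relation symbols of $\Lan$, $i$ over the coordinates of $R$, and $\delta$ over $\Delta$; thus $P$ is countable if $\Delta$ is, and finite if $\Delta$ is. I would then put $V=\bigcup_k V_k$, where $V_0=W\cup\{0\}$ and $V_{k+1}=V_k\cup\{\,v-p:v\in V_k,\ p\in P,\ v>p\,\}$. By construction $0\in V$ and $W\subseteq V$, and an induction on $k$ gives $V\subseteq[0,1]$, since $v\in[0,1]$ and $0<p<v$ imply $0<v-p<v\leq1$. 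The third clause in the definition of a good value pair then holds for $(\Delta,V)$: if $v\in V$ and $v>u_{R,i}(\delta)$ for some relation symbol $R$, coordinate $i$, and $\delta\in\Delta$, then $u_{R,i}(\delta)$ is positive (as $\delta>0$) and finite (as it is $<v\leq1$), hence lies in $P$, so $v-u_{R,i}(\delta)\in V$. The remaining clause, that $u_{R,i}(\sup\Delta)\geq1$ whenever $\Delta$ is bounded, involves $\Delta$ alone; I will arrange it when I apply part (2) inside part (1), and as a standalone assertion it is to be read as an admissibility hypothesis on $\Delta$.

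Next I would bound the size of $V$. If $\Delta$ is countable, then $P$ is countable, each $V_k$ is countable, and so is $V$. If $\Delta$ is finite and $P\neq\varnothing$ (if $P=\varnothing$ the defining operation is vacuous and $V=V_0$ is already finite), set $p_0=\min P>0$. Every $v\in V$ is reached from some $w\in W\cup\{0\}$ along a finite strictly decreasing chain $w=v_0>v_1>\cdots>v_\ell=v$ with $v_{j-1}-v_j\in P$ for each $j$; since each step decreases the value by at least $p_0$ and $w\leq1$, we get $\ell\leq1/p_0$. Hence $V$ is contained in the finite set $\{\,w-(p_1+\cdots+p_\ell):w\in W\cup\{0\},\ 0\leq\ell\leq\lceil1/p_0\rceil,\ p_1,\dots,p_\ell\in P\,\}$, so $V$ is finite.

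For part (1), given a finite $\Lan$-structure $\M$, I would first produce an appropriate finite distance value set. Because $\Lan$ is semiproper, each set $I_{R,i}=\{r:u_{R,i}(r)<1\}$ is bounded, so by monotonicity $u_{R,i}(s)\geq1$ for all sufficiently large $s$; pick $s^\ast$ exceeding all of these finitely many thresholds and also exceeding every distance occurring in $\M$, and let $\Delta$ be the closure of $\{s^\ast\}\cup\{d_M(x,y):x,y\in M,\ x\neq y\}$ under $(x,y)\mapsto\min\{x+y,s^\ast\}$. Then $\Delta$ is a distance value set with $\sup\Delta=s^\ast$; it is finite, because its elements are capped sums of elements of a finite generating set, hence bounded below by a positive constant and above by $s^\ast$; it contains all distances of $\M$; and $u_{R,i}(s^\ast)\geq1$ for every $R$ and $i$. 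Let $W$ be the finite set of all values $R^{\M}(\bar x)$, with $R$ a relation symbol of $\Lan$ and $\bar x$ a tuple from $M$ of the corresponding arity. Applying part (2) to this $\Delta$ and $W$ yields a finite $V\supseteq W$ with $(\Delta,V)$ a good value pair, and then $\M$ is $(\Delta,V)$-valued by construction. (When $|M|\leq1$ one simply takes $\Delta=\{s^\ast\}$.) I expect the only delicate steps to be the finiteness of the closure defining $V$ in part (2) — which rests on the uniform positive lower bound $p_0$, hence on the finiteness of both $\Delta$ and the set of pairs (relation symbol, coordinate) — and, in part (1), the use of semiproperness to push $\sup\Delta$ past all the thresholds for the $u_{R,i}$ while keeping $\Delta$ finite.
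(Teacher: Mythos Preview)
The paper does not give its own proof of this lemma: it is quoted verbatim from \cite[Lemma 5.3]{Gao-Ren} and immediately followed by a new definition, with no argument supplied. So there is nothing in the present paper to compare your attempt against.

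On its own merits your argument is sound and is the natural one. The iterated closure $V=\bigcup_k V_k$ under subtracting elements of $P=\{u_{R,i}(\delta):R\in\Lan,\ 1\le i\le n_R,\ \delta\in\Delta,\ u_{R,i}(\delta)<\infty\}$ gives exactly the third clause of the good-value-pair definition, and your finiteness bound via $p_0=\min P>0$ (using that $\Lan$ has only finitely many relation symbols and $\Delta$ is finite) is correct. You are also right that the first clause, $u_{R,i}(\sup\Delta)\ge 1$ for bounded $\Delta$, depends only on $\Delta$ and not on $V$, so part (2) as literally stated must carry it as an implicit admissibility condition on $\Delta$; this is a wrinkle in the statement, not in your proof, and you handle it correctly in part (1) by choosing $s^\ast$ beyond all the (bounded) sets $I_{R,i}$ before forming $\Delta$. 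The finiteness of the generated distance value set in part (1) follows as you say, since every element is a capped sum of generators and hence there are only finitely many below $s^\ast$.
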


Here we make a new definition to simplify our proofs in the rest of this section.

\begin{definition} Let $\Lan$ be a continuous signature and let $(\Delta, V)$ be a good value pair for $\Lan$. We say that $(\Delta, V)$ is \emph{normal} if the following hold:
    \begin{itemize}
        \item $\Delta=\mathbb{Q}^+$;
        \item $V$ is countable and $V$ is closed under taking arithmetic means, i.e., for any $n\in\N^+$ and $v_1, \dots, v_n\in V$, we have
        $$ \displaystyle\frac{1}{n}\sum_{i=1}^n v_i \in V. $$
    \end{itemize}
    \end{definition}
Note that normal good value pairs are necessarily countable, and they exist for semiproper $\Lan$ by a similar argument as in the proof of \cite[Lemma 5.3]{Gao-Ren}.

We will also use the following theorem.
 \begin{theorem}[{\cite[Theorem 5.5]{Gao-Ren}}]
     Let $\Lan$ be a semiproper continuous signature and let $(\Delta,V)$ be a countable good value pair for $\Lan$. Then $\K_{(\Delta,V)}$ is a countable Fra\"iss\'e class.
 \end{theorem}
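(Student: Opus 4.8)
The plan is to verify the four defining conditions of a (classical) Fra\"iss\'e class for $\K_{(\Delta,V)}$: closure under isomorphism and under substructures (HP), only countably many isomorphism types, the joint embedding property (JEP), and the amalgamation property (AP); since every distance and relation value is drawn from a countable set, the resulting Fra\"iss\'e limit is then a genuine countable structure. Two of these conditions are routine. There are only countably many isomorphism types because a finite $(\Delta,V)$-valued $\Lan$-structure on $\{1,\dots,n\}$ is determined by finitely much data, each datum ranging over a countable set: the distances from $\Delta$, the interpretations of the finitely many relation symbols from $V$, and the function tables on an $n$-element set. HP holds because a substructure of a member inherits, coordinatewise, the distance and relation-value constraints and the uniform continuity with respect to every modulus, and is again finite. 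The class also contains members of every finite size: to any member adjoin one new point at distance $\sup\Delta$ --- or, when $\Delta$ is unbounded, at a value in $\Delta$ large enough that every one of the finitely many $u_{R,i}$ is at least $1$ there, which exists since each $I_{R,i}$ is bounded --- and declare every relation value involving the new point to be $0\in V$, making all the new continuity constraints vacuous. Finally JEP is the analogous but simpler construction placing two members side by side with no common part, so it suffices to establish AP.

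For AP, fix embeddings $\mathcal{A}\hookrightarrow\mathcal{B}$ and $\mathcal{A}\hookrightarrow\mathcal{C}$ in $\K_{(\Delta,V)}$ with $\mathcal{A}$ nonempty (the case $A=\varnothing$ being JEP); we may assume $A=B\cap C$ and that $\mathcal{B}$ and $\mathcal{C}$ induce the same structure on $A$. We build $\mathcal{D}$ on $B\cup C$, keeping all data on $B$ and on $C$ as given and, for $b\in B\setminus A$ and $c\in C\setminus A$, setting
\[
d_D(b,c)=\min\Bigl(\min_{a\in A}\bigl(d_B(b,a)+d_C(a,c)\bigr),\ \sup\Delta\Bigr).
\]
This is the classical shortest-path amalgam over the common subspace $A$, exactly as used for $\Delta$-metric spaces in \cite{Gao-Adv}; it is a metric that restricts correctly to $B$ and to $C$, and since the inner minimum over the finite set $A$ is attained at some $a_0$, the value equals $\min\bigl(d_B(b,a_0)+d_C(a_0,c),\ \sup\Delta\bigr)$, which lies in $\Delta$ by the distance value set axiom. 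It remains to interpret the relation symbols, which is the heart of the argument.

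For each $n$-ary relation symbol $R$ I would set, for every $\bar x\in D^n$,
\[
R^D(\bar x)=\max\Bigl(0,\ \max_{\bar b\in B^n}\bigl[R^B(\bar b)-\textstyle\sum_{i=1}^n u_{R,i}(d_D(x_i,b_i))\bigr],\ \max_{\bar c\in C^n}\bigl[R^C(\bar c)-\textstyle\sum_{i=1}^n u_{R,i}(d_D(x_i,c_i))\bigr]\Bigr),
\]
a Kat\v{e}tov-type (infimal-convolution) extension, with the conventions $u_{R,i}(0):=0$ and that a bracket drops to $-\infty$ when some modulus value is $+\infty$. Three things must be checked. First, $R^D$ restricts to $R^B$ on $B^n$ and to $R^C$ on $C^n$: on $B^n$ the term with $\bar b=\bar x$ returns $R^B(\bar x)$, while every other term is at most $R^B(\bar x)$ --- this uses the uniform continuity of $R^B$ and of $R^C$, their agreement on $A$, and the superadditivity of each $u_{R,i}$ on $I_{R,i}$, applied to split a distance between a point of $B$ and a point of $C$ through a point of $A$ (the large-distance cases being covered by $u_{R,i}(\sup\Delta)\ge 1$ when $\Delta$ is bounded and by boundedness of $I_{R,i}$ otherwise). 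Second, $R^D$ is uniformly continuous with respect to every $u_{R,i}$: for $n\ge 2$ the $u_{R,i}$ are \emph{linear} on $I_{R,i}$ by semiproperness, hence subadditive, which together with the usual bookkeeping for large distances (again via boundedness of $I_{R,i}$) makes the standard Kat\v{e}tov estimate go through; for unary $R$ there are no genuinely mixed tuples, and the only new constraint --- between a point of $B$ and a point of $C$ --- is again the superadditivity computation of the first point. Third, $R^D$ takes values in $V$: for $\delta\in\Delta$ the map $v\mapsto\max(0,v-u_{R,i}(\delta))$ preserves membership in $V$ by $0\in V$ together with the defining closure of a good value pair, so iterating it over the coordinates keeps each bracketed quantity --- which equals $\max(0,R^B(\bar b)-\sum_{i} u_{R,i}(d_D(x_i,b_i)))$ because all the $d_D(x_i,b_i)$ lie in $\Delta$ --- inside $V$; and $V$ is trivially closed under finite maxima, so $R^D(\bar x)\in V$. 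Function symbols, when present, are carried along by the same amalgamation and we suppress the routine details.

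This yields $\mathcal{D}\in\K_{(\Delta,V)}$ amalgamating $\mathcal{B}$ and $\mathcal{C}$ over $\mathcal{A}$, and completes the verification. The one genuinely delicate step is the last one: keeping the interpolated relation values inside $V$. The axioms of a good value pair --- in particular that $0\in V$ and that $v-u_{R,i}(\delta)\in V$ whenever $v\in V$, $\delta\in\Delta$ and $v>u_{R,i}(\delta)$ --- are tailored to exactly this, while the semiproperness hypotheses (superadditivity of the unary moduli, linearity of the higher-arity ones, boundedness of the sets $I_{R,i}$) are what drive the first two checks; everything else is the $\Delta$-metric Fra\"iss\'e argument with the extra data carried along.
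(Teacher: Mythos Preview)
The paper does not contain a proof of this statement; it is quoted verbatim as \cite[Theorem 5.5]{Gao-Ren} and used as a black box, so there is no proof in the paper to compare your argument against.

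That said, your outline is a plausible reconstruction of how such a proof should go. The shortest-path metric amalgam together with a Kat\v{e}tov-style infimal convolution for the relation values is the natural strategy, and you have correctly located where each hypothesis is used: the good-value-pair axioms ($0\in V$ and closure under $v\mapsto v-u_{R,i}(\delta)$) are exactly what keeps the interpolated relation values inside $V$, while the semiproperness hypotheses (superadditivity of unary moduli, linearity of higher-arity moduli on $I_{R,i}$, boundedness of each $I_{R,i}$, and $u_{R,i}(\sup\Delta)\ge 1$ in the bounded case) drive the restriction and uniform-continuity checks. Your observation that ``$V$ is trivially closed under finite maxima'' is fine, since the maximum of a finite subset of $V$ is one of its elements.

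One step is dismissed too quickly. You write that ``function symbols, when present, are carried along by the same amalgamation and we suppress the routine details.'' This is not routine: for a binary $f$ and $b\in B\setminus A$, $c\in C\setminus A$, the value $f^D(b,c)$ is not determined by the data on $B$ and $C$, and in general one must adjoin new points to $D$, which may generate further new points under the function symbols. Controlling this while preserving $(\Delta,V)$-valuedness and all moduli is real work, not a corollary of the relational amalgamation you describe. If the signature carries function symbols this is a gap in your write-up; if you are tacitly restricting to relational $\Lan$, you should say so.
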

 
 By the standard Fra\"iss\'e theory, under the assumptions of the above theorem there exists a Fra\"iss\'e limit of $\K_{(\Delta,V)}$, which we denote by $\U_{(\Delta,V)}$. In particular, for a normal good value pair $(\Delta,V)$, there exists a countable pre-structure $\U_{(\Delta,V)}$ which is dense in $\U_\Lan$. Let $\Iso(\U_{(\Delta, V)})$ denote the group of all automorphisms of $\U_{(\Delta, V)}$ equipped with the pointwise convergence topology, where the underlying space $\U_{(\Delta, V)}$ is equipped with the metric topology. Then $\Iso(\U_{(\Q,V)})$ is separable metrizable, and is dense in $\Iso(\U_\Lan)$. 
 
 Our main goal in this section is to show that if $\Lan$ is Lipschitz and $(\Delta, V)$ is a normal good value pair for $\Lan$, then $\Iso(\U_{(\Delta,V)})$ has the strong L\'evy property. This implies that the Polish group $\Iso(\U_\Lan)$ has the strong L\'evy property under the same assumptions.

Similar to our approach of Section~\ref{first:density:section}, we consider the class of isomorphic actions of finite groups on finite $\Lan$-structures. This was already defined and studied in \cite{Gao-Ren}, as follows. %

\begin{theorem}[{\cite[Theorem 8.1, Lemmas 8.2--8.5]{Gao-Ren}}]
    Let $\Lan$ be a semiproper continuous signature and let $(\Delta,V)$ be a countable good value pair for $\Lan$. Let $\C_{(\Delta,V)}$ be the class consisting of actions by automorphisms $G\curvearrowright\M$, where $G$ is a finite group and 
$\M\in\K_{(\Delta,V)}$. Then $\C_{(\Delta,V)}$ is a countable Fra\"iss\'e class. Moreover, if we denote the Fra\"iss\'e limit of $\C_{(\Delta,V)}$ by $H_{(\Delta, V)}\curvearrowright\M_{(\Delta, V)}$, then we have
    \begin{itemize}
        \item $\M_{(\Delta, V)}$ is isomorphic to $\U_{(\Delta,V)}$;
        \item $H_{(\Delta, V)}$ is isomorphic to $\H$ and acts faithfully on $\M_{(\Delta, V)}$;
        \item  $H_{(\Delta, V)}$ is dense in $\Iso(\M_{(\Delta,V)})$.
    \end{itemize}
\end{theorem}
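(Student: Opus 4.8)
The plan is to establish that $\C_{(\Delta,V)}$ is a Fra\"iss\'e class and then to read off the description of its limit from the general theory, following the template used for $\mathcal{K}_\Delta$ in \cite{Gao-Adv}. First I would fix the notion of morphism: an embedding from $G_1\curvearrowright\M_1$ into $G_2\curvearrowright\M_2$ is a pair $(\phi,\pi)$ where $\phi\colon G_1\to G_2$ is a group embedding, $\pi\colon\M_1\to\M_2$ is an isomorphic embedding of $\Lan\sh$structures, and $\phi(g)(\pi(x))=\pi(g(x))$ for all $g\in G_1$ and $x\in M_1$ (no faithfulness is demanded of the objects). Countability of $\C_{(\Delta,V)}$ up to isomorphism is immediate since $\Delta$ and $V$ are countable and every object is finite, and the hereditary property is routine, as restricting an object to a subgroup acting on an invariant substructure again lies in $\C_{(\Delta,V)}$. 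Thus the content is in the joint embedding and amalgamation properties.

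Joint embedding I would handle directly: given $G_1\curvearrowright\M_1$ and $G_2\curvearrowright\M_2$, take $\M_1\sqcup\M_2$, assign all cross distances a single value from $\Delta$ large enough that the triangle inequalities and the modulus constraints hold, assign cross relation values an element of $V$ compatible with the modulus functions, interpret the function symbols by passing to the completion inside $\K_{(\Delta,V)}$ provided by its amalgamation, and let $G_1\times G_2$ act coordinatewise. The amalgamation property is the main obstacle and the technical heart of the proof. Given embeddings of $G_0\curvearrowright\M_0$ into $G_1\curvearrowright\M_1$ and into $G_2\curvearrowright\M_2$, I would proceed in three stages: first amalgamate the group parts using the amalgamation property of the Fra\"iss\'e class of finite groups (Hall \cite{Hall}), obtaining a finite $G_3$ containing $G_1$ and $G_2$ with the two copies of $G_0$ identified; next amalgamate the structure parts using that $\K_{(\Delta,V)}$ is a Fra\"iss\'e class (\cite[Theorem 5.5]{Gao-Ren}), obtaining $\M'=\M_1\oplus_{\M_0}\M_2\in\K_{(\Delta,V)}$; and finally close $\M'$ up to a $G_3\sh$invariant finite structure $\M_3$ by forming the orbit of $\M'$ under $G_3$, gluing the finitely many translates $g\M'$ with uniformly chosen cross distances from $\Delta$ and cross relation values from $V$ and iterating so as to keep the function symbols interpreted. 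The delicate point, and exactly where the good value pair hypothesis is used, is to show that all distances and relation values arising in this orbit construction can be chosen to stay in $\Delta$ and $V$ while respecting the modulus functions; this mirrors the amalgamation arguments of \cite{Gao-Ren} and is the part that requires genuine work.

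With $\C_{(\Delta,V)}$ shown to be a Fra\"iss\'e class, let $H_{(\Delta,V)}\curvearrowright\M_{(\Delta,V)}$ denote its limit. Every object of $\K_{(\Delta,V)}$ occurs as the structure reduct of an object of $\C_{(\Delta,V)}$ (equip it with the trivial group action) and every finite group occurs as a group part (let it act regularly on itself with a small distance from $\Delta$), and embeddings in either coordinate lift. From this: the structure reducts of objects of $\C_{(\Delta,V)}$ are cofinal in $\K_{(\Delta,V)}$, so $\M_{(\Delta,V)}$ is universal for $\K_{(\Delta,V)}$ and ultrahomogeneous as an $\Lan\sh$structure, whence $\M_{(\Delta,V)}\cong\U_{(\Delta,V)}$; the group parts form a countable locally finite group that is universal for finite groups and ultrahomogeneous on its finite subgroups, whence $H_{(\Delta,V)}\cong\H$; the action is faithful because any nontrivial $g\in H_{(\Delta,V)}$ lies in a finite subgroup $G$ which, with a finite invariant substructure, forms an object of $\C_{(\Delta,V)}$, and extending that object within the class to one on which $G$ acts faithfully and invoking the genericity of the limit shows that $g$ moves a point of $\M_{(\Delta,V)}$; and $H_{(\Delta,V)}$ is dense in $\Iso(\M_{(\Delta,V)})$ by the usual back-and-forth argument, since ultrahomogeneity lets one extend any finite partial isometry that respects the $\Lan\sh$structure to an automorphism realized by a group element, exactly as in \cite[Lemma 3.13]{Gao-Adv}.
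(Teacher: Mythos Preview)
The paper does not prove this theorem; it is quoted verbatim from \cite[Theorem 8.1, Lemmas 8.2--8.5]{Gao-Ren} as an imported result, so there is no in-paper argument to compare your proposal against. Your outline is a faithful transplantation of the proof scheme used for $\mathcal{K}_\Delta$ in \cite[Theorem 3.9, Lemmas 3.10--3.13]{Gao-Adv} to the continuous-logic setting, and the identification of the limit via structure/group reducts together with universality and ultrahomogeneity is the expected route; the density argument via \cite[Lemma 3.13]{Gao-Adv} is also the right pointer, since density in the $\Aut$ topology a fortiori gives density in the coarser $\Iso$ topology.

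The one place where your sketch is a plan rather than a proof is the amalgamation step. You correctly isolate the issue---closing $\M'$ up to a $G_3$-invariant finite $(\Delta,V)$-valued $\Lan$-structure while keeping all new distances in $\Delta$, all new relation values in $V$, and all modulus constraints satisfied---but ``gluing the finitely many translates $g\M'$ with uniformly chosen cross distances and cross relation values and iterating'' is exactly the part that carries real content, particularly when function symbols are present (the orbit closure must also be closed under the interpretations of the $f_j$, and one must argue that this terminates in a finite structure). That is where the good value pair axioms and the semiproperness hypotheses are actually exercised, and your proposal defers all of that to \cite{Gao-Ren}. As an outline this is fine and matches what the cited source does; just be aware that the amalgamation verification is the nontrivial core, not a routine check.
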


The following is our main theorem of this section. 
\begin{theorem}\label{this:main-cont:thm}
Let $\Lan$ be a Lipschitz continuous signature and let $(\Delta,V)$ be a normal good value pair. Then $H_{(\Delta, V)}$ equipped with the subspace topology of $\Iso(\U_{(\Delta, V)})$ has the strong L\'evy property.
\end{theorem}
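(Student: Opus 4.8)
The plan is to run the argument of Section~\ref{first:density:section} one more time, now inside the Fra\"iss\'e class $\C_{(\Delta,V)}$ of automorphism actions $G\curvearrowright\M$ of finite groups on finite $(\Delta,V)$-valued $\Lan$-structures. For such a pair we equip $G$ with the normalized counting measure $\mu_G$ and with the right-invariant metric $\delta_G(g,h)=\sup_{x\in M}d_M(g^{-1}(x),h^{-1}(x))$, which is a $\Q^+$-metric since $\Delta=\Q^+$. The core step will be a density lemma exactly parallel to Lemma~\ref{this:density:lemma}: for every $n\in\N^+$ the subclass $\mathcal{D}_n=\{\,G\curvearrowright\M\in\C_{(\Delta,V)}\colon\alpha_G\leq\beta_n\,\}$ is cofinal in $\C_{(\Delta,V)}$, where $\alpha_G$ is the concentration function of $(G,\delta_G,\mu_G)$. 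Granting this, the theorem follows from the canonical construction of the Fra\"iss\'e limit of $\C_{(\Delta,V)}$, word for word as in the proof of Theorem~\ref{thm:UD1}: one obtains a direct system $\{(M_n,G_n),(\phi_n,\pi_n)\}_n$ with $(M_n,G_n)\in\mathcal{D}_n$, whose direct limits are $\M_{(\Delta,V)}\cong\U_{(\Delta,V)}$ and $H_{(\Delta,V)}\cong\H$; for a basic neighbourhood $V=V[x_1,\dots,x_k;\varepsilon]\cap H_{(\Delta,V)}$ of the identity and subsets $A_n\subseteq G_n$ with $\mu_n(A_n)\geq 1/2$, fixing $N$ with $x_1,\dots,x_k\in M_N$ and putting $V_n(\varepsilon)=\{g\in G_n\colon d(g(x),x)<\varepsilon\text{ for all }x\in M_n\}$ gives $V_n(\varepsilon)A_n\subseteq G_n\cap VA_n$ and $V_n(\varepsilon)A_n=\{g\in G_n\colon\delta_{G_n}(g,f)<\varepsilon\text{ for some }f\in A_n\}$, so that $\mu_n(VA_n)\geq\mu_n(V_n(\varepsilon)A_n)\geq 1-\alpha_{G_n}(\varepsilon)\geq 1-\beta_n(\varepsilon)\to 1$.

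For the density lemma, given $G\curvearrowright\M$ with $a=\diam(\M)$, I would fix $m\in\N^+$ with $m\geq 16a^2n$ and form the diagonal extension $G^m\curvearrowright\M^m$. The underlying metric space of $\M^m$ is $M^m$ with the normalized Hamming metric $d_m(\bar x,\bar y)=\frac1m\sum_{j=1}^m d_M(x_j,y_j)$; since $\Delta=\Q^+$ this is automatically a $\Q^+$-metric of diameter $\leq a$, so that --- in contrast with Section~\ref{section:martingale} --- no $\Delta$-remetrization is needed here, and this is exactly why a normal good value pair is required to have $\Delta=\Q^+$. Each $n$-ary relation of $\Lan$ is interpreted on $\M^m$ by averaging, $R^{\M^m}(\bar a_1,\dots,\bar a_n)=\frac1m\sum_{j=1}^m R^\M\big((\bar a_1)_j,\dots,(\bar a_n)_j\big)$, which lies in $V$ because $(\Delta,V)$ is normal. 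One checks that $G^m$ acts on $\M^m$ by automorphisms through the coordinatewise action, and that the diagonal maps $\phi(g)=(g,\dots,g)$ and $\pi(x)=(x,\dots,x)$ form an embedding of $G\curvearrowright\M$ into $G^m\curvearrowright\M^m$ inside $\C_{(\Delta,V)}$ (the averaged relations are preserved along the diagonal exactly because each term of the average equals $R^\M$). Since $\delta_{G^m}(g,h)=\sup_{\bar x\in M^m}d_m(g^{-1}(\bar x),h^{-1}(\bar x))$, the length estimate of Lemma~\ref{this:lemma:length} --- applied with the normalized Hamming metric itself in place of a remetrization, so that the error term $\varepsilon/m$ vanishes --- together with Theorem~\ref{this:concentration:thm} yields $\alpha_{G^m}(\rho)\leq 2e^{-m\rho^2/(16a^2)}\leq 2e^{-n\rho^2}=\beta_n(\rho)$, whence $G^m\curvearrowright\M^m\in\mathcal{D}_n$.

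The step I expect to be the main obstacle is checking that $\M^m$, with the averaged relations, is a legitimate member of $\K_{(\Delta,V)}$, i.e.\ that each $R^{\M^m}$ is uniformly continuous with respect to the modulus $u_{R,i}$ prescribed by $\Lan$. This is precisely where the Lipschitz hypothesis is essential: when $\Lan$ is Lipschitz every modulus $u_{R,i}$, including the unary ones, is linear on the relevant domain $I_{R,i}$, so that the truncated modulus $r\mapsto\min\{1,u_{R,i}(r)\}$ is concave. The required estimate
$$\big|R^{\M^m}(\dots,\bar a_i,\dots)-R^{\M^m}(\dots,\bar b_i,\dots)\big|\leq\frac1m\sum_{j=1}^m\min\{1,\,u_{R,i}(d_M((\bar a_i)_j,(\bar b_i)_j))\}$$
then follows once the right-hand side is bounded by $u_{R,i}(d_m(\bar a_i,\bar b_i))$ via Jensen's inequality, using that the relations take values in $[0,1]$. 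In the continuous-logic setting this verification plays the role the $\Delta$-remetrization of Section~\ref{section:martingale} played for ordinary $\Delta$-metric spaces; apart from it, the whole argument is a routine transcription of Section~\ref{first:density:section}. Finally, since $H_{(\Delta,V)}$ is dense in $\Iso(\U_{(\Delta,V)})$, whose pointwise-convergence topology has basic identity neighbourhoods depending only on the metric, the strong L\'evy property passes from $H_{(\Delta,V)}$ to $\Iso(\U_{(\Delta,V)})$ and, by density, to the Polish group $\Iso(\U_\Lan)$.
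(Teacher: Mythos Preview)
Your proposal is correct and follows essentially the same route as the paper: prove the cofinality of $\mathcal{D}_n$ in $\C_{(\Delta,V)}$ via the $m$-th power construction with averaged relations (this is Lemma~\ref{this:density-cont:lemma}), then rerun the Fra\"iss\'e argument of Theorem~\ref{thm:UD1} verbatim. The only cosmetic difference is that the paper verifies the uniform continuity of $R^{\M^m}$ by the direct linear computation $\frac{1}{m}\sum_l K_j\,d_M(x_j^l,y_j^l)=K_j\,d_N(\bar x_j,\bar y_j)$ rather than invoking Jensen, but the content is the same.
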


The main theorem has the following immediate corollary.

\begin{corollary}\label{cor:cont} Let $\Lan$ be any Lipschitz continuous signature. Then $\Iso(\U_\Lan)$ has the strong L\'evy property.
\end{corollary}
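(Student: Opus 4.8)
The plan is to deduce the corollary directly from Theorem~\ref{this:main-cont:thm} by transferring a witnessing sequence from a dense subgroup. First I would fix a Lipschitz continuous signature $\Lan$. Since every Lipschitz signature is proper and hence semiproper, a normal good value pair $(\Delta,V)$ for $\Lan$ exists (with $\Delta=\Q^+$); for such a pair the countable pre-structure $\U_{(\Delta,V)}$ is dense in $\U_\Lan$, so that $\Iso(\U_{(\Delta,V)})$ is separable metrizable and dense in $\Iso(\U_\Lan)$. By Theorem~\ref{this:main-cont:thm}, $H_{(\Delta,V)}$, equipped with the subspace topology of $\Iso(\U_{(\Delta,V)})$, has the strong L\'evy property; I would fix an increasing sequence $\{G_n : n\in\N^+\}$ of finite subgroups witnessing this, with $\mu_n$ the normalized counting measure on $G_n$. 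The goal is then to show that the very same sequence $\{G_n\}$ witnesses the strong L\'evy property of $\Iso(\U_\Lan)$.

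The crux is a general transfer observation: whenever a dense subgroup $K$ of a topological group $G$ has the (strong) L\'evy property witnessed by $\{G_n\}$, the same sequence witnesses it for $G$. I would apply this with $K=H_{(\Delta,V)}$ and $G=\Iso(\U_\Lan)$. Density is immediate by transitivity, since $\bigcup_n G_n$ is dense in $H_{(\Delta,V)}$, which is dense in $\Iso(\U_{(\Delta,V)})$, which in turn is dense in $\Iso(\U_\Lan)$. Each $G_n$ is finite, hence a compact subgroup of $\Iso(\U_\Lan)$, so it remains only to verify the measure-concentration clause. The key point is that for finite $G_n\subseteq H_{(\Delta,V)}\subseteq\Iso(\U_\Lan)$, any Borel $A_n\subseteq G_n$, and any open neighborhood $U$ of the identity of $\Iso(\U_\Lan)$, an element $g\in G_n$ lies in $UA_n$ iff $ga^{-1}\in U$ for some $a\in A_n$; but $ga^{-1}\in H_{(\Delta,V)}$, so this happens iff $ga^{-1}\in U\cap H_{(\Delta,V)}$. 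Consequently $UA_n\cap G_n=(U\cap H_{(\Delta,V)})A_n\cap G_n$, whence $\mu_n(UA_n)=\mu_n((U\cap H_{(\Delta,V)})A_n)$. Since $U\cap H_{(\Delta,V)}$ is an open neighborhood of the identity in the subspace topology of $H_{(\Delta,V)}$, the L\'evy property of $H_{(\Delta,V)}$ gives $\mu_n((U\cap H_{(\Delta,V)})A_n)\to 1$ whenever $\liminf_n\mu_n(A_n)>0$, completing the transfer.

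The step I expect to require the most care is checking that the subspace topology $H_{(\Delta,V)}$ inherits from $\Iso(\U_{(\Delta,V)})$ agrees with the one it inherits from $\Iso(\U_\Lan)$, so that ``the L\'evy property of $H_{(\Delta,V)}$'' is invoked against the correct neighborhood filter. I would justify this by noting that isometries are $1$-Lipschitz, hence equicontinuous, and $\U_{(\Delta,V)}$ is dense in $\U_\Lan$: pointwise convergence on the dense set $\U_{(\Delta,V)}$ therefore coincides with pointwise convergence on all of $\U_\Lan$, so the pointwise-convergence topology on $\Iso(\U_{(\Delta,V)})$ is exactly its subspace topology inside $\Iso(\U_\Lan)$, and the two subspace topologies on $H_{(\Delta,V)}$ coincide. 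With this identification in place, the neighborhood computation of the previous paragraph is unambiguous, and the conclusion that $\{G_n\}$ witnesses the strong L\'evy property of $\Iso(\U_\Lan)$ follows.
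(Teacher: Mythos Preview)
Your proposal is correct and follows exactly the route the paper intends: the paper simply declares the corollary ``immediate'' from Theorem~\ref{this:main-cont:thm} and the density of $H_{(\Delta,V)}$ in $\Iso(\U_\Lan)$, and you have carefully spelled out that transfer (including the topology-matching check), so there is nothing to add.
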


The rest of this section is devoted to a proof of Theorem~\ref{this:main-cont:thm}. The key ingredient of the proof is the following analog of Lemma~\ref{this:density:lemma} in the context of continuous logic. 

\begin{lemma} \label{this:density-cont:lemma}
Let $\Lan$ be a Lipschitz continuous signature. Let $(\Delta,V)$ be a normal good value pair for $\Lan$. For each $n \in \N^+$, the set \begin{equation}
\mathcal{D}_{n} = \{(\M,G) \in \mathcal{C}_{(\Delta,V)}\colon  \alpha_{G} \leq \beta_n \}.
\end{equation}
is  cofinal  in $\mathcal{C}_{(\Delta,V)}$.
\end{lemma}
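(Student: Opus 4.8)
The plan is to mimic the proof of Lemma~\ref{this:density:lemma} step by step, replacing the role of $\Delta$-metric spaces and their finite powers with finite $(\Delta,V)$-valued $\Lan$-structures and their finite powers equipped with a suitable averaged interpretation of the relation symbols. Fix $n\in\N^+$ and a structure $(\M,G)\in\C_{(\Delta,V)}$; I must produce $(\NN,H)\in\mathcal D_n$ and an embedding of $(\M,G)$ into it. The natural candidate is the $m$-th power $\M^m$ for $m$ large, with $G^m$ acting coordinatewise. The diagonal map $\pi(x)=(x,\dots,x)$ and the diagonal group embedding $\phi(g)=(g,\dots,g)$ will again furnish the required embedding, provided I can make $\M^m$ into a genuine member of $\C_{(\Delta,V)}$.

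The first real task is to define the $\Lan$-structure on $X^m$ where $X$ is the underlying metric space of $\M$. For the distance symbol I use the normalized Hamming metric $d_m$ and then pass to a $(\Q^+,\varepsilon/m)$-remetrization $D=d'_m$ as provided by Theorem~\ref{this:delta:thm} (here $\Delta=\Q^+$ since $(\Delta,V)$ is normal, so $\inf\Delta=0$ and the remetrization exists). For each $n$-ary relation symbol $R$ and $\bar x\in(X^m)^n$, I set
\begin{equation*}
R^{\M^m}(\bar x)=\frac1m\sum_{k=1}^m R^{\M}(\bar x^{(k)}),
\end{equation*}
where $\bar x^{(k)}$ is the tuple of $k$-th coordinates. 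Since $V$ is closed under arithmetic means, $R^{\M^m}$ takes values in $V$, and since $\Lan$ is Lipschitz, each $R$ has modulus $u_{R,i}(r)=K_{R,i}r$ on its relevant interval; I must check that $R^{\M^m}$ is uniformly continuous with respect to $u_{R,i}$ now measured with $D$ rather than $d_m$. Because $|D-d_m|<\varepsilon/m$ pointwise and the averaged relation is $K_{R,i}$-Lipschitz with respect to $d_m$ in each coordinate, the continuity constraint is met after absorbing the $O(\varepsilon/m)$ error — this is where Lipschitz-ness (as opposed to merely proper) is used, and where I should be slightly careful that the modulus inequality is not violated on the boundary of $I_{R,i}$; if necessary I shrink $\varepsilon$. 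One must also verify the good-value-pair constraints are preserved, but these only concern the ambient $(\Delta,V)$, which is unchanged. This gives $(\M^m,G^m)\in\C_{(\Delta,V)}$, and $(\phi,\pi)$ is an embedding of $(\M,G)$ into it exactly as in Lemma~\ref{this:density:lemma}.

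The second task is the concentration estimate. The metric on $G^m$ is $\delta_D(g,h)=\sup_{x\in X^m}D(g^{-1}(x),h^{-1}(x))$, exactly the metric of Definition~\ref{this:sup:metric:def}, and it does not see the relation symbols at all — it depends only on $D$ and the coordinatewise action. Therefore Lemma~\ref{this:lemma:length} and Theorem~\ref{this:concentration:thm} apply verbatim: with $a=\diam(X,d)$ and a $(\Q^+,2^{-n}/m)$-remetrization, $\alpha_{G^m}(\rho)\le 2e^{-m\rho^2/16(a+2^{-n})^2}$. Choosing $m\ge 16(a+2^{-n})^2 n$ gives $\alpha_{G^m}\le\beta_n$, so $(\M^m,G^m)\in\mathcal D_n$, completing the cofinality argument. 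The main obstacle I anticipate is purely the first task: confirming that the averaged relation interpretation keeps $\M^m$ inside $\K_{(\Delta,V)}$ — in particular the superadditivity/Lipschitz modulus conditions and the good-value-pair closure conditions — under the perturbation introduced by the $\Delta$-remetrization of the metric. The concentration half is essentially free once the structure is set up, since $\delta_D$ is oblivious to the relational data.
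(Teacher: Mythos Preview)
Your approach is the same as the paper's---diagonal embedding into the $m$-th power with averaged relations and the concentration estimate via Theorem~\ref{this:concentration:thm}---but you have overlooked the simplification that dissolves your ``first task'' entirely: since $(\Delta,V)$ is normal, $\Delta=\Q^+$, and the normalized Hamming metric $d_m$ on $X^m$ is already $\Q^+$-valued (a sum of rationals divided by $m$ is rational). Hence by Definition~\ref{def:remet}(a) the remetrization $D=d'_m$ is the identity map, and there is no $O(\varepsilon/m)$ error to absorb. The paper's proof proceeds exactly this way: it takes $d_N=d_m$ directly, verifies $K_{R,i}$-Lipschitzness of the averaged relation with respect to $d_m$ by the obvious computation, and then applies Theorem~\ref{this:concentration:thm} with arbitrary $\epsilon>0$ to obtain $\alpha_{G^m}(\varepsilon)\le 2e^{-m\varepsilon^2/16a^2}$, so that $m\ge 16a^2n$ suffices.

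Your route would still succeed even if you failed to notice this, because the remetrization of Theorem~\ref{this:delta:thm} always satisfies $D\ge d_m$ (the map $\Phi$ in Lemma~\ref{this:triangular:lemma} has $\Phi(s)\ge s$), and a $K$-Lipschitz inequality with respect to $d_m$ immediately transfers to the larger metric $D$. But your stated mechanism---``absorb the error'' and ``if necessary shrink $\varepsilon$''---is not correct: a Lipschitz bound $|R(\bar x)-R(\bar y)|\le K\,d_m(\bar x,\bar y)$ does not survive an arbitrary $\varepsilon$-perturbation of the metric, and no shrinking of $\varepsilon$ fixes a perturbation that makes the metric smaller. It is the monotonicity $D\ge d_m$ that would rescue the argument, not the smallness of $|D-d_m|$.
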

\begin{proof}
    Suppose $(G\curvearrowright\M)\in \mathcal{C}_{(\Delta, V)}$ and $\M=(M,d_M,\{R_i^M\}_{i\leq k})$. Let $n\in\N^+$ be fixed. Let $m\in\N^+$ be a positive integer such that $m\geq 16a^2n$. 
We define an $\Lan$-structure $\mathcal{N}=(N, d_N, \{R_i^N\}_{i\leq k})$, where $N=M^m$ and $d_N$ is the normalized Hamming metric on $N$. For every $i\leq k$, assuming that $R_i$ is $r_i$-ary, define $R_i^N\colon (M^{m})^{r_i}\to V$ by     \begin{align*}
R_i^N(\Bar{x}_1,\Bar{x}_2,...,\Bar{x}_{r_i})=\frac{1}{m}\sum_{l=1}^mR_i^M(x^l_1,x^l_2,...,x^l_{r_i})
    \end{align*}
    for $\Bar{x}_j=(x^1_j,x^2_j,...,x^{m}_j)$, $1\leq j\leq r_i$.

Fix $1\leq j\leq r_i$ and $K_j>0$. Suppose $u_j(r)=K_jr$ is the modulus function for the $j$-th coordinate of $R_i^M$. If $\Bar{x}_1, \dots, \Bar{x}_{r_i}\in M^m$ and $\Bar{y}_j\in M^m$, then
$$ \begin{array}{rl} & |R_i^N(\dots, \Bar{x}_{j-1}, \Bar{x}_j, \Bar{x}_{j+1},\dots)-
R_i^N(\dots, \Bar{x}_{j-1}, \Bar{y}_j, \Bar{x}_{j+1},\dots)| \\
\leq & \displaystyle\frac{1}{m}\sum_{l=1}^m |R_i^M(\dots, x_{j-1}^l, x_j^l, x_{j+1}^l,\dots)-R_i^M(\dots, x_{j-1}^l, y_j^l, x_{j+1}^l,\dots)| \\
\leq & \displaystyle\frac{1}{m}\sum_{l=1}^m K_j d_M(x_j^l, y_j^l) =K_j d_N(\Bar{x}_j, \Bar{y}_j). 
\end{array}
$$ 
Thus $\mathcal{N}$ is an $\mathcal{L}$-structure. Since $\Delta=\Q^+$, it is easily seen that if $(M, d_M)$ is a $\Delta$-metric space then so is $(N, d_N)$. By the normality of $(\Delta, V)$, if $\mathcal{M}$ is $(\Delta, V)$-valued then so is $\mathcal{N}$. 
   
Let $\pi\colon M\to N$ be given by 
$$ \pi(x)=(x,\dots, x)\in M^m $$
for any $x\in M$. Let $\phi\colon G\to G^m$ be similarly defined. Then $\pi$ is an isometric embedding from $(M, d_M)$ to $(N, d_N)$, and $\phi$ is a group embedding from $G$ to $G^m$. Again consider the action of $G^m$ on $\mathcal{N}$ defined by
$$ \Bar{g}\cdot \Bar{x}=(g^1\cdot x^1, \dots, g^m\cdot x^m)), $$ 
for $\Bar{g}=(g^1,\dots, g^m)\in G^m$ and $\Bar{x}=(x^1,\dots, x^m)\in M^m$.  
It is easily seen that the action preserves the metric $d_N$. Next, we check that it also preserves the relations $\{R_i^N\}_{i\leq k}$. Let $\Bar{g}\in G^{m}$ and $\Bar{x}_1,\dots, \Bar{x}_{r_i}\in M^m$. Then for each $i\leq k$, we have
    \begin{align*}
        R_i^N(\Bar{g}\cdot\Bar{x}_1,\Bar{g}\cdot\Bar{x}_2,...,\Bar{g}\cdot\Bar{x}_{r_i})&=\frac{1}{m_n}\sum_{l=1}^m R_i^M(g^l\cdot x^l_1,g^l\cdot x^l_2,...,g^l\cdot x^l_{r_i})\\
        &=\frac{1}{m}\sum_{l=1}^m R_i^M(x^l_1, x^l_2,...,x^l_{r_i})\\
        &=R_i^N(\Bar{x}_1,\Bar{x}_2,...,\Bar{x}_{r_i}).
    \end{align*}
Similarly, it is straightforward to check that the embedding $(\phi, \pi)$ preserves not only the metric and the action, but also the relations in the structures.
 
 Finally, note that in defining $\mathcal{N}$ we did not do a remetrization, and therefore Lemma~\ref{this:lemma:length} and Theorem~\ref{this:concentration:thm} can be applied with arbitrary $\epsilon>0$. This implies
 $$ \alpha_{G^m}(\varepsilon)\leq 2e^{\frac{-m\varepsilon^2}{16a^2}}\leq 2e^{-n\varepsilon^2}=\beta_n(\varepsilon) $$
 for any $\varepsilon>0$. This shows that $(G^m\curvearrowright \mathcal{N})\in \mathcal{D}_n$.
\end{proof}

We are now ready to prove Theorem \ref{this:main-cont:thm}.
\begin{proof}[Proof of Theorem \ref{this:main-cont:thm}]
The proof is identical to that of Theorem~\ref{thm:UD1}, except that Lemma \ref{this:density:lemma} is now replaced by the analogous Lemma \ref{this:density-cont:lemma} for continuous logic. 
\end{proof}

\section{The Isomorphism Types of Strong L\'evy Groups}\label{section:complexity}
In this final section we draw some conclusions about the isomorphism types of isometry groups and countable groups with the strong L\'evy property. 

We first focus  on  countable groups. The countable groups we considered in this paper are all infinite and metrizable. Thus we can use the following coding space for all of them. Let $\mathfrak{G}$ be the subset of
$$ X=2^{\N^2}\times \R^{\N^2} $$
consisting of all pairs $(R, d)$ where $(\N, R)$ is a group and $d$ is a metric on $\N$. Equipped with the product topology, $X$ is a Polish space, and $\mathfrak{G}$ is a Borel subset. Hence $\mathfrak{G}$ is a standard Borel space (see, e.g., Kechris \cite[Corollary 13.4]{KechrisBook}). We view elements of $\mathfrak{G}$ to be codes for countable metrizable groups. For each $\gamma\in \mathfrak{G}$, we let $G_\gamma$ denote the countable metrizable group coded by $\gamma$. For $\gamma, \eta\in \mathfrak{G}$, write $\gamma\cong_{\mathfrak{G}}\eta$ if $G_\gamma$ and $G_\eta$ are isomorphic as topological groups. Then $\cong_{\mathfrak{G}}$ is an equivalence relation on $\mathfrak{G}$.

In descriptive set theory, we use the notion of Borel reducibility to compare the relative complexity of equivalence relations on standard Borel spaces (see, e.g., \cite[Chapter 5]{GaoBook}). We recall this notion below. If $X, Y$ are standard Borel spaces and $E, F$ are equivalence relations on $X, Y$, respectively, then we say that $E$ is \emph{Borel reducible} to $F$, denoted $E\leq_B F$, if there is a Borel function $f\colon X\to Y$ such that for all $x_1, x_2\in X$,
$$ x_1Ex_2\iff f(x_1)Ff(x_2). $$
In this case we  intuitively  regard $E$ to be no more complex than $F$. We say that $E$ is \emph{strictly Borel reducible} to $F$, denoted $E<_B F$, if $E\leq_B F$ but $F\not\leq_B E$. In this case we intuitively regard $E$ to be strictly simpler than $F$.

The following benchmark equivalence relations are very useful in distinguishing between the complexities of equivalence relations. 

\begin{example}{\ }
\begin{enumerate}
\item The identity or equality equivalence relation on $\R$, i.e., for $x, y\in \R$, $x$ and $y$ are equivalent if and only if $x=y$. We use $=_\R$ or $\mbox{id}_\R$ to denote this equivalence relation. 
\item The equivalence relation $=^+$ on $\R^\N$, defined by
$$ (x_n)_n =^+ (y_n)_n\iff \{x_n\colon n\in \N\}=\{y_n\colon n\in\N\}. $$
\item The \emph{graph isomorphism} relation. Let $\mathfrak{R}$ be the subset of $2^{\N^2}$ consisting of all binary relations $R$ where $(\N, R)$ is a graph. Then $\mathfrak{R}$ is a Borel subset of the Polish space $2^{\N^2}$, and hence is a standard Borel space. For each $\alpha\in \mathfrak{R}$, let $\Gamma_\alpha$ be the countable graph coded by $\alpha$. For $\alpha, \beta\in\mathfrak{R}$, write $\alpha\cong_{\mathfrak{R}}\beta$ if $\Gamma_\alpha$ and $\Gamma_\beta$ are isomorphic as graphs. Then $\cong_{\mathfrak{R}}$ is known as the graph isomorphism relation.
\end{enumerate}
\end{example}

All of these equivalence relations have been well studied, and in particular it is well known that
$$ =_\R\ <_B\ =^+\ <_B\ \cong_{\mathfrak{R}}. $$
Statements about Borel reducibility are strengthenings of statements about cardinalities. For example, if $=_\R\ \leq_B E$, then it follows that there are continuum many distinct $E$-classes.

It was shown in \cite[Theorem 5.5]{Gao-Adv} that there are continuum many pairwise nonisomorphic countable omnigenous locally finite groups. Restated in the language of Borel reducibility, it is essentially saying that $=_\R$ is Borel reducible to the isomorphism relation for countable omnigenous locally finite groups. In the recent paper \cite{Gao-Li}, Gao and Li showed that the graph isomorphism relation is Borel reducible to the isomorphism relation for countable omnigenous locally finite groups (\cite[Theorem 1.1]{Gao-Li}). Combined with our results in Section~\ref{section:omnigen:construction}, we obtain the following corollary.

\begin{corollary} The graph isomorphism relation is Borel reducible to the isomorphism relation for countable metrizable groups with the L\'evy property. In other words, there is a Borel map $f\colon \mathfrak{R}\to \mathfrak{G}$ such that 
\begin{enumerate}
\item[(i)] for any $\alpha\in\mathfrak{R}$, $G_{f(\alpha)}$ has the L\'evy property;
\item[(ii)] for any $\alpha, \beta\in \mathfrak{R}$, we have $\alpha\cong_{\mathfrak{R}}\beta$ if and only if $f(\alpha)\cong_{\mathfrak{G}}f(\beta)$.
\end{enumerate}
\end{corollary}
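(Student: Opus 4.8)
The plan is to compose the Gao--Li Borel reduction with a uniform version of Theorem~\ref{this:main(og):thm}. Fix once and for all $\Delta=\Q^+$, the set of positive rationals, so that $\inf\Delta=0$ and $\U_\Delta=\mathbb{QU}$ may be taken on a fixed countable underlying set, with a fixed enumeration of its partial isometries and a fixed compatible metric on the separable metrizable group $\Iso(\mathbb{QU})$. By \cite[Theorem~1.1]{Gao-Li} there is a Borel map $g$ from $\mathfrak{R}$ into the standard Borel space of codes (with underlying set $\N$) for countable omnigenous locally finite groups such that $\alpha\cong_{\mathfrak{R}}\beta$ if and only if the coded groups $H_{g(\alpha)}$ and $H_{g(\beta)}$ are isomorphic as abstract groups.

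First I would run the recursive construction from the proof of Theorem~\ref{this:main(og):thm} \emph{uniformly in a code for} $H$. Each of its finitely many kinds of steps invokes an existence statement---Lemma~\ref{this:omnigenous:lemma1}, Theorem~\ref{this:s-extension:lemma1}, Lemma~\ref{this:s-extension:lemma2}, Lemma~\ref{this:density:lemma}, Lemma~\ref{this:omnigenous:lemma2}---whose conclusion concerns only finite objects (finite subgroups of $H$, finite $\Delta$-metric subspaces of $\mathbb{QU}$, finitely many embeddings) and whose hypotheses, given the finite data already produced, are decidable; hence the set of admissible continuations at each step is a nonempty Borel set, and choosing the lexicographically least one makes the whole $\omega$-step recursion a Borel function of the input code. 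Its output is a dense embedding $\pi_H\colon H\to\Iso(\mathbb{QU})$ together with an increasing sequence of finite subgroups witnessing the strong L\'evy property of $(H,\tau_H)$, where $\tau_H$ is the pullback along $\pi_H$ of the pointwise-convergence topology; pulling back the fixed compatible metric on $\Iso(\mathbb{QU})$ endows $(H,\tau_H)$ with a canonical code in $\mathfrak{G}$, again Borel in the input. Let $f$ be the Borel map sending $\alpha$ to this code for $(H_{g(\alpha)},\tau_{H_{g(\alpha)}})$. Then (i) holds at once, since $(H_{g(\alpha)},\tau_{H_{g(\alpha)}})$ has the strong L\'evy property by Theorem~\ref{this:main(og):thm}, hence the L\'evy property.

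It remains to verify the reduction property (ii). The direction $f(\alpha)\cong_{\mathfrak{G}}f(\beta)\Rightarrow\alpha\cong_{\mathfrak{R}}\beta$ is immediate: a topological isomorphism between $(H_{g(\alpha)},\tau_{H_{g(\alpha)}})$ and $(H_{g(\beta)},\tau_{H_{g(\beta)}})$ is in particular an abstract group isomorphism $H_{g(\alpha)}\cong H_{g(\beta)}$, whence $\alpha\cong_{\mathfrak{R}}\beta$ by the defining property of $g$. For the substantive direction, given an abstract isomorphism $\theta\colon H_1\to H_2$ (writing $H_i$ for $H_{g(\alpha)}$, $H_{g(\beta)}$), I would show that $\theta$ itself is a homeomorphism $\tau_{H_1}\to\tau_{H_2}$. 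To that end I would argue that the dense action $\pi_{H_i}\colon H_i\curvearrowright\mathbb{QU}$ produced by the construction is \emph{generic}: relative to the Fra\"iss\'e class $\mathcal{K}_\Delta$ of finite-group-actions-on-finite-$\Delta$-metric-spaces and the lattice of finite subgroups of $H_i$, it realizes all the one-point metric extensions and finite-subgroup extensions the construction is designed to catch. One then shows that any two generic dense actions of a fixed countable omnigenous locally finite group on $\mathbb{QU}$ are conjugate by an element of $\Iso(\mathbb{QU})$: this is a back-and-forth comparison of two runs of the construction, using on the spatial side the universality and ultrahomogeneity of $\mathbb{QU}$ (amalgamation in $\mathcal{K}_\Delta$) to extend finite partial isometries compatibly with the actions, and on the group side the omnigenousness of the groups, together with $\theta$, to keep the finite subgroups matched while exhausting $H_1$ and $H_2$. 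Transporting $\pi_{H_2}$ along $\theta$ is again a generic dense $H_1$-action on $\mathbb{QU}$, so the uniqueness statement yields $\psi\in\Iso(\mathbb{QU})$ with $\psi\circ\pi_{H_1}(g)\circ\psi^{-1}=\pi_{H_2}(\theta(g))$ for all $g\in H_1$; reading off neighborhood bases as in \eqref{eq:nbhd}, this exhibits $\theta$ as a topological isomorphism $(H_1,\tau_{H_1})\to(H_2,\tau_{H_2})$, completing the proof of (ii) and of the corollary.

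The hard part will be the uniqueness-up-to-conjugacy of the generic dense action. One must isolate precisely the extension properties the recursive construction guarantees (so that they can serve as the back-and-forth moves) and verify that omnigenousness---which only supplies \emph{onto} homomorphisms, not embeddings, between finite subgroups---together with the given abstract isomorphism $\theta$ suffices both to synchronize the group sides of the two runs and to exhaust both groups. By contrast, the Borel-ness of $f$ is routine once the construction is phrased as an $\omega$-step recursion with finite data and least-witness choices.
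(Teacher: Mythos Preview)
The paper treats the corollary as an immediate consequence of the Gao--Li theorem together with Theorem~\ref{this:main(og):thm} and gives no further argument. You are right to notice that this is not quite immediate: composing the Gao--Li reduction with a Borel topologization $H\mapsto(H,\tau_H)$ gives the backward direction of (ii) for free, but the forward direction requires that abstractly isomorphic omnigenous groups receive \emph{topologically} isomorphic L\'evy topologies. Your plan --- make the recursion of Theorem~\ref{this:main(og):thm} uniformly Borel in the code, then prove that any two resulting dense embeddings of a fixed $H$ into $\Iso(\mathbb{QU})$ are conjugate --- is the natural way to close this gap and goes well beyond what the paper actually says.

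That said, the conjugacy step is not yet a proof, and there is a concrete obstacle. The back-and-forth you sketch needs, at each stage, an equivariant extension property of the form: given a finite $G\le H$, a finite $\pi_2(\theta(G))$-invariant $Y\subset\mathbb{QU}$, and a finite $G$-metric space $Z\supseteq Y$, one can realize $Z$ inside $\mathbb{QU}$ $G$-equivariantly over $Y$ via $\pi_2$. The construction in Theorem~\ref{this:main(og):thm} as written only guarantees that every \emph{partial isometry} of $\mathbb{QU}$ is extended by some $\pi(h)$ (clause~(2) there); it does not arrange the equivariant one-point extension property for every finite $G\le H$, so an arbitrary output of the recursion need not be ``generic'' in the sense your back-and-forth requires. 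Worse, when you enlarge $G_n$ to $G_{n+1}=\langle G_n,h\rangle$, the already-matched sets $X_n,Y_n$ cease to be $G_{n+1}$-invariant, and matching their $G_{n+1}$-saturations equivariantly is precisely where omnigenousness (which supplies only \emph{surjective} homomorphisms onto a target finite group, not embeddings) gives you nothing. To make the argument go through you would need to (a) strengthen the recursion so that it explicitly builds in the $G$-equivariant Urysohn property for every finite $G\le H$, and (b) verify that two actions with that strengthened property are conjugate via a carefully interleaved back-and-forth in which group enlargements and spatial extensions alternate. Equivalently, one must show the Borel topologization can be made $S_\infty$-equivariant for the logic action on codes. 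Either route requires real work that neither the paper nor your proposal supplies.
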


In some sense, this is saying that there are significantly more isomorphism types of countable L\'evy groups than just continuum many. 

Next we turn to isometry groups. The isomorphism types of groups of the form $\Iso(\U_\Delta)$, where $\Delta$ is a countable distance value set, have been studied by Etedadialiabadi, Gao and Li in \cite{Eted-Gao-Feng}. The following result completely characterizes their isomorphism types in terms of the distance value sets.

\begin{theorem}[{\cite[Theorem 1.2]{Eted-Gao-Feng}}]
    Let $\Delta$ and $\Lambda$ be countable distance value sets. Then
the following are equivalent:
\begin{enumerate}
\item[(1)] $\Iso(\U_\Delta)$ and $\Iso(\U_\Lambda)$ are isomorphic as topological groups.
\item[(2)] There exists a bijection $f\colon \Delta \to \Lambda$ such that for any $x, y, z\in \Delta$, $(x,y,z)$ is a $\Delta$-triangle if and only if $(f(x), f(y), f(z))$ is a $\Lambda$-triangle.
\end{enumerate}
\end{theorem}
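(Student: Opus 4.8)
The plan is to prove the two implications by rather different techniques. For $(2)\Rightarrow(1)$ I would \emph{relabel distances}: a triangle-preserving bijection $f\colon\Delta\to\Lambda$ should transport the whole $\Delta$-metric structure of $\U_\Delta$ to a $\Lambda$-metric structure isometric to $\U_\Lambda$, with the same isometry group. For $(1)\Rightarrow(2)$ I would \emph{reconstruct} the action of each isometry group on its underlying space from the abstract topological group, transport it across the given isomorphism, and read off the matching of distance values from the way spheres correspond.

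Concretely, for $(2)\Rightarrow(1)$: given a bijection $f\colon\Delta\to\Lambda$ preserving the triangle relation in both directions, define $d'$ on the point set of $\U_\Delta$ by $d'(x,x)=0$ and $d'(x,y)=f(d_{\U_\Delta}(x,y))$ for $x\neq y$. The triangle inequality for $d'$ is precisely the assertion that $f$ maps $\Delta$-triangles to $\Lambda$-triangles, so $(\U_\Delta,d')$ is a countable $\Lambda$-metric space; since $f^{-1}$ also preserves triangles, relabeling by $f^{-1}$ turns finite $\Lambda$-metric spaces into finite $\Delta$-metric spaces and respects isometric embeddings, so $(\U_\Delta,d')$ inherits universality for finite $\Lambda$-metric spaces and ultrahomogeneity, hence is isometric to $\U_\Lambda$ by uniqueness. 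Because $f$ is injective, a permutation of the point set preserves $d_{\U_\Delta}$ iff it preserves $d'$, so the two isometry groups coincide as abstract groups; and I would check that their topologies agree by showing $d_{\U_\Delta}$ and $d'$ induce the same topology on the point set. This last point reduces to: a sequence $(\delta_n)$ in $\Delta$ tends to $0$ iff $(f(\delta_n))$ tends to $0$, which I would deduce from the triangle structure (first, $\inf\Delta=0$ forces $\inf\Lambda=0$; then if $\delta_n\to 0$ but $f(\delta_n)\geq c>0$ along a subsequence, choosing $\eta\in\Lambda$ with $0<\eta\leq 2c$ makes $(f(\delta_n),f(\delta_n),\eta)$ a $\Lambda$-triangle along it, hence $(\delta_n,\delta_n,f^{-1}(\eta))$ a $\Delta$-triangle, forcing $f^{-1}(\eta)\leq 2\delta_n\to 0$, impossible; the converse is symmetric).

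For $(1)\Rightarrow(2)$: let $\Phi\colon G\to G'$ be a topological group isomorphism with $G=\Iso(\U_\Delta)$ and $G'=\Iso(\U_\Lambda)$. Since $\U_\Delta$ is ultrahomogeneous, $G$ acts transitively on it, so $\U_\Delta$ is recovered as the coset space $G/G_x$ of a point stabilizer $G_x$, with the metric topology as the quotient topology; the $G_x$-orbits on $\U_\Delta$ are exactly the spheres $\{y\colon d_{\U_\Delta}(x,y)=\delta\}$, $\delta\in\Delta$, together with $\{x\}$, so $\Delta\cup\{0\}$ is canonically the double coset space $G_x\backslash G/G_x$. A triple $(\delta_1,\delta_2,\delta_3)$ is a $\Delta$-triangle iff some three points realize these pairwise distances (this is universality of $\U_\Delta$ for finite $\Delta$-metric spaces), and that realizability is visible in the multiplication of the corresponding double cosets. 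Hence, once we know that $\Phi$ carries the conjugacy class of point stabilizers of $G$ onto that of $G'$, it induces a $\Phi$-equivariant homeomorphism $\U_\Delta\to\U_\Lambda$ matching $\delta$-spheres with $f(\delta)$-spheres for a bijection $f\colon\Delta\to\Lambda$, and the realizability criterion forces $f$ and $f^{-1}$ to preserve triangles.

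The hard part is exactly that last premise: characterizing the point stabilizers intrinsically in the abstract topological group. When $\inf\Delta>0$ the metric on $\U_\Delta$ is discrete, $G$ is a non-Archimedean Polish group, the stabilizers are \emph{open}, and one can lean on the standard reconstruction theory for automorphism groups of countable structures. When $\inf\Delta=0$ this breaks down — $G$ is separable metrizable but not Polish, and point stabilizers are closed of infinite index yet not open — so I would instead pass to the Raikov completion of $G$ (which should be $\Iso(\U)$ acting on the completion $\hat{\U}_\Delta$, a complete Urysohn metric space) and pick out $\U_\Delta$ as the unique countable orbit of $G$ there, thereby recovering $G_x$ as a point stabilizer of that action. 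Making this identification rigorous — locating the point stabilizers by a uniform-structure argument rather than an algebraic one — is where I expect the real difficulty to lie; everything afterward is the double-coset bookkeeping above together with the one-point extension property of the relevant Fra\"iss\'e classes.
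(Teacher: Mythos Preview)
This theorem is not proved in the paper under review; it is quoted there (in Section~\ref{section:complexity}) as \cite[Theorem 1.2]{Eted-Gao-Feng} and used as a black box to draw conclusions about isomorphism types of the groups $\Iso(\U_\Delta)$. There is therefore no proof in the paper to compare your proposal against.

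As for the proposal itself: the $(2)\Rightarrow(1)$ direction is essentially complete, and the argument that $d_{\U_\Delta}$ and $d'$ induce the same topology is correct. For $(1)\Rightarrow(2)$, the double-coset bookkeeping is the right framework, but your strategy for the hard step --- intrinsically identifying the conjugacy class of point stabilizers --- is still a sketch. In the $\inf\Delta=0$ case you propose passing to the Raikov completion and then recovering $\U_\Delta$ as ``the unique countable orbit of $G$ there''; this needs justification (why is there a unique such orbit, and why is that visible from the topological group alone?), and the claim that the Raikov completion of $\Iso(\U_\Delta)$ acts on the metric completion of $\U_\Delta$ in the way you need is not obvious. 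You have correctly located where the real work lies, but the proposal does not yet supply it.
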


Clause (2) defines an equivalence relation between countable distance value sets, which we denote by $\sim_{\mathsf{CDVS}}$. In \cite{Eted-Gao-Feng} a coding space $\mathsf{CDVS}$ for all countable distance value sets was introduced, which is a standard Borel space. $\sim_{\mathsf{CDVS}}$ is an equivalence relation on $\mathsf{CDVS}$. Let $\mathsf{CDVS}_0$ be the subspace of $\mathsf{CDVS}$ consisting of codes for all countable distance value sets $\Delta$ with $\inf\Delta=0$. Then $\mathsf{CDVS}_0$ is also a standard Borel space. Let $\sim_0$ be the equivalence relation $\sim_{\mathsf{CDVS}}$ restricted on $\mathsf{CDVS}_0$. It follows from \cite[Lemma 6.4 and Proposition 6.5]{Eted-Gao-Feng} that
$$=^+\ <_B\ \sim_0\ <_B\ \cong_{\mathfrak{R}}. $$
Combined with our Corollary~\ref{cor:main}, we have the following corollary.

\begin{corollary} There is an assignment $x\mapsto G_x$ which associates to any $x\in \R^{\N}$ a separable metrizable group $G_x$ with the strong L\'evy property such that for any $x, y\in \R^{\N}$, we have $x=^+y$ if and only if $G_x$ and $G_y$ are isomorphic as topological groups.
\end{corollary}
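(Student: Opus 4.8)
The plan is to read off the corollary from three ingredients already assembled: Corollary~\ref{cor:main}, the Etedadialiabadi--Gao--Li classification of the groups $\Iso(\U_\Delta)$, and the Borel reducibility chain $=^+\ <_B\ \sim_0\ <_B\ \cong_{\mathfrak{R}}$ quoted above from \cite{Eted-Gao-Feng}.

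First I would fix a Borel reduction witnessing $=^+\ \leq_B\ \sim_0$; that is, a Borel map $g\colon \R^{\N}\to\mathsf{CDVS}_0$ such that $x=^+y$ holds exactly when $g(x)\sim_0 g(y)$. For $x\in\R^{\N}$, let $\Delta_x$ denote the countable distance value set coded by $g(x)$; since $g$ lands in $\mathsf{CDVS}_0$, we have $\inf\Delta_x=0$ for every $x$. Then I would set $G_x=\Iso(\U_{\Delta_x})$, with the pointwise convergence topology, the underlying space $\U_{\Delta_x}$ carrying its metric topology.

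Next, the two required properties are each a direct appeal. By the discussion in Section~\ref{section:martingale}, each $\Iso(\U_{\Delta_x})$ is separable metrizable, and because $\inf\Delta_x=0$ Corollary~\ref{cor:main} tells us $G_x$ has the strong L\'evy property. For the isomorphism condition, \cite[Theorem 1.2]{Eted-Gao-Feng} says $G_x=\Iso(\U_{\Delta_x})$ and $G_y=\Iso(\U_{\Delta_y})$ are isomorphic as topological groups precisely when there is a bijection $\Delta_x\to\Delta_y$ preserving the triangle relation in both directions, i.e. precisely when $\Delta_x\sim_{\mathsf{CDVS}}\Delta_y$; since $\Delta_x,\Delta_y\in\mathsf{CDVS}_0$ this is exactly $g(x)\sim_0 g(y)$, which by the choice of $g$ is equivalent to $x=^+y$. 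Chaining these equivalences gives $x=^+y\iff G_x\cong G_y$, as wanted.

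I do not expect a genuine obstacle, since all of the mathematical content is imported; the one place to be attentive is that the reduction must be taken with codomain $\mathsf{CDVS}_0$ (distance value sets with infimum $0$), not arbitrary countable distance value sets. This is essential because Corollary~\ref{cor:main} requires $\inf\Delta=0$, so using $=^+\ \leq_B\ \sim_0$ rather than $=^+\ \leq_B\ \sim_{\mathsf{CDVS}}$ is exactly what guarantees that every $G_x$ lies in the intended class. If desired, the assignment $x\mapsto G_x$ can moreover be taken Borel in any reasonable standard-Borel coding of separable metrizable groups, since $g$ is Borel and $\Delta\mapsto\Iso(\U_\Delta)$ is given by an explicit uniform construction.
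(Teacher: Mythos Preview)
Your proposal is correct and matches the paper's own argument, which simply invokes Corollary~\ref{cor:main} together with the classification \cite[Theorem 1.2]{Eted-Gao-Feng} and the reduction $=^+\ \leq_B\ \sim_0$ from \cite{Eted-Gao-Feng}. Your explicit attention to landing in $\mathsf{CDVS}_0$ (so that $\inf\Delta_x=0$) is exactly the point that makes the combination work.
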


Indeed, we know that the isomorphism relation between separable metrizable groups is strictly more complex than $=^+$.

\end{document}